\newcommand{\gll}{{\mathrm{GL}}}
\newcommand{\zz}{{\mathbb Z}}
\newcommand{\pzz}{{\mathbb Z_{>0}}}
\newcommand{\rr}{{\mathbb R}}
\newcommand{\cc}{{\mathbb C}}
\newcommand{\surj}{\twoheadrightarrow}
\newcommand{\map}{\rightarrow}
\newcommand{\ind}{\mathrm{Ind}}
\newcommand{\cind}{\mathrm{ind}}
\newcommand{\Ho}{\mathrm{Hom}}
\newcommand{\wsh}{W_{\mathrm{Sh}}}
\newcommand{\diag}{{\mathrm{diag}}}
\theoremstyle{definition}
\newtheorem{theo}{Theorem}[section]
\newtheorem{prop}[theo]{Proposition}
\newtheorem{lemm}[theo]{Lemma}
\newtheorem{cor}[theo]{Corollary}
\newtheorem{rem}[theo]{Remark}
\newtheorem{conj}[theo]{Conjecture}
\begin{document}

\title{On branching laws of Speh representations}

%    Information for first author
\author{Nozomi Ito}
\address{Department of Mathematics,
Kyoto University,
Kitashirakawa Oiwake-cho, Sakyo-ku,
Kyoto 606-8502, Japan}
\email{nozomi@math.kyoto-u.ac.jp}
\keywords{zeta integrals, Speh representations, Miyawaki lifts}
\subjclass[2010]{22E50 (primary), 11F70 (secondary)}
\maketitle

\begin{abstract}
In this paper, we consider the branching law of the Speh representation $\mathrm{Sp}(\pi,n+l)$ of $\gll_{2n+2l}$ with respect to the block diagonal subgroup $\gll_n\times\gll_{n+2l}$ for any irreducible generic representation $\pi$ of $\gll_2$ over any $p$-adic field.
We use the Shalika model of $\mathrm{Sp}(\pi,n)$ to construct certain zeta integrals, which were defined by Ginzburg and Kaplan independently, and study them.
Finally, using these zeta integrals, we obtain a  nonzero $\gll_n\times\gll_{n+2l}$-map from  $\mathrm{Sp}(\pi,n+l)$ to $\tau\boxtimes\tau^\vee\chi_\pi\times\mathrm{Sp}(\pi, l)$ for any irreducible representation $\tau$ of $\gll_n$.
These results form part of the local theory of the Miyawaki lifting for unitary groups.
\end{abstract}

\setcounter{tocdepth}{1}
\setcounter{section}{-1} 
%\tableofcontents
\section{Introduction}\label{S:int}
In \cite{ikeda2006pullback}, Ikeda constructed a certain lifting of Siegel modular forms by using the pullbacks of Ikeda lifts to block diagonal subsets as kernel functions to approach Miyawaki's conjecture \cite{miya}.
It is called the Miyawaki lifting today and its analogues for other groups and representation-theoretical generalizations were given by some researchers (\cite{ATOBE2018281}, \cite{Kim:2018tm}, \cite{Atobe:2020uz} and others).
Representation-theoretically, Ikeda lifts are characterized as automorphic representations whose unramified components are isomorphic to some (quotients of) degenerate principal series representations induced from Siegel parabolic subgroups.
Then, the Miyawaki lifting is a construction of automorphic representations using the pullbacks of automorphic forms in Ikeda lifts to diagonal subgroups (e.g. $\mathrm{Sp}(W_1)\times\mathrm{Sp}(W_2)$ for $\mathrm{Sp}(W_1\oplus W_2)$, where $W_1,W_2$ are symplectic spaces)  as kernel functions.
Thus, the study of the local theory of Miyawaki lifting is to study the branching laws of local components of Ikeda lifts with respect to diagonal subgroups.

Now we consider the local theory of the Miyawaki lifting for unitary groups (see \cite{itofirst} for more details) at finite places where the unitary groups split.
In this case, what we study is the branching law of any irreducible representation $\Pi'$ of $\gll_{2n+2l}$ which is realized as the unique irreducible quotient of \mbox{$\pi'|\cdot|^{(n+l-1)/2}\times\dots\times\pi'|\cdot|^{-(n+l-1)/2}$} (normalized induction) for some generic unitary representation $\pi'$ of $\gll_2$ with respect to the diagonal subgroup $\gll_n\times\gll_{n+2l}=\diag(\gll_n,\gll_{n+2l})\subset\gll_{2n+2l}$, where $n,l\in\zz_{\geq 0}$, $F$ is a $p$-adic field with absolute value $|\cdot|$, and we write $\gll_m=\gll_m(F)$ for short.
In general, $\Pi'$ is isomorphic to the Speh representation $\mathrm {Sp}(\pi',n+l)$ (see \S \ref{SS:sp}).
If $\pi'=\chi_1\times\chi_2$ for some characters $\chi_1, \chi_2$ of $F^\times$, then $\mathrm {Sp}(\pi',n+l)\simeq\chi_1(\det)\times\chi_2(\det)$ and the branching law of $\chi_1(\det)\times\chi_2(\det)|_{\gll_n\times\gll_{n+2l}}$ was studied  in the context of the doubling method.
However, we cannot use the same method if $\pi'$ is a supercuspidal representation.
Moreover, since the doubling method is a theory for L-functions of $G\times\gll_1$ for some algebraic group $G$, it  could be incompatible with the Miyawaki lifting, which is expected to be related to some L-function of $G\times\gll_2$, when considering the global theory.
Thus we need a new theory of zeta integrals.

What we focus on is the recent paper of Lapid and Mao \cite{lapid_mao_2020}.
They constructed analogues of Rankin-Selberg integrals for Speh representations by using the Shalika models of Speh representations.
Based on this result, we construct a modification of doubling zeta integrals  (to be mentioned later, it turned out that these integrals had already been defined by other researchers).

Let us state the main results of this paper.
Let $\pi$ be a generic irreducible representation of $\gll_2$ with central character $\chi_\pi$.
We assume that $\pi$ is approximately tempered (see \S \ref{SS:not}, note that if $\pi$ is unitary, then it is approximately tempered).
For $m\in\zz_{>0}$, we denote the Speh representation $\mathrm{Sp}(\pi, m)$ by $\pi_m$ for short.
For a representation $\mu$, we denote the (smooth) dual of $\mu$ by $\mu^\vee$.

First we consider the equal rank case, i.e., $l=0$.
We fix an additive character $\psi$ of $F$ and denote the Shalika model  of $\pi_n$, i.e., a (unique) realization of $\pi_n$ in $\ind^{\gll_{2n}}_U\psi\circ\mathrm{tr}$ for $U={\{(
\begin{smallmatrix}
1_n&*\\
0_n&1_n
\end{smallmatrix})\}}\simeq \mathrm M_n(F)$ (from \S \ref{SS:mod}, we use a slightly different notation),  by $\mathcal W^{\psi}_{\mathrm{Sh}}(\pi_n)$.
Let $\tau\neq0$ be a smooth representation of $\gll_n$ such that $\tau\subset \tau_1\times\cdots\times\tau_m$ for some irreducible representations $\tau_i$.
For $\wsh\in \mathcal W^\psi_{\mathrm{Sh}}(\pi_n)$, $s\in\cc$, and a matrix coefficient $f$ of $\tau$, we put
$$Z(\wsh,s,f):=\int_{\gll_n}\Phi_{\wsh} (g)f(g)|\det g|^{s-\frac{1}{2}}dg,$$
where $\Phi_{\wsh}$ is the restriction of $\wsh$ to $\diag(\gll_n,1_n)\simeq\gll_n$.
Then the following holds:
\begin{theo}[Theorem \ref{T:main1}]\label{T:int1}
%\hypertarget{\csname @currentHref\endcsname}{}
\begin{enumerate}%%
\renewcommand{\labelenumi}{(\roman{enumi})}
\item If $\mathrm{Re}(s)$ is sufficiently large, then the integral defining $Z(\wsh,s,f)$ converges absolutely for any $\wsh$ and $f$.
Moreover, $Z(\wsh,s,f)$ admits meromorphic continuation to all of $\cc$ and there is a (unique) polynomial $P(X)\in\cc[X]$ such that $P(0)=1$ and 
$$\langle Z(\wsh,s,f)\ | \ \wsh\in \mathcal W^\psi_{\mathrm{Sh}}(\pi_n), \ f: \text{a matrix coefficient of } \tau \rangle_\cc=P(q^{-s})^{-1}\cc[q^{-s},q^{s}],$$
where $q$ is the cardinality of the residue field of $F$; denote $P(q^{-s})^{-1}$  by $L(\pi;s,\tau)$.
\item  There is a function $\gamma(s)\in\cc(q^{-s})$ such that 
$$Z(\widetilde\wsh,1-s,f(\cdot^{-1}))=\gamma(s)Z(\wsh,s,f)$$
for any $\wsh$ and $f$, where $\widetilde\wsh:=\chi_\pi^{-1}(\det)\wsh(\left(\begin{smallmatrix}1_n&0\\0&-1_n\end{smallmatrix}\right)\cdot\left(\begin{smallmatrix}0&1_n\\-1_n&0\end{smallmatrix}\right))(\in \mathcal W^{\psi^{-1}}_{\mathrm{Sh}}(\mathrm{Sp}(\pi^\vee,n))$; denote $\gamma(s)$  by $\gamma(\pi;s,\tau,\psi)$.
\item If $\tau$ is irreducible and generic, then
$$L(\pi;s,\tau)=L(s,\pi\boxtimes\tau),$$
where the right-hand side is the local L-factor of $\pi\boxtimes\tau$ defined by Jacquet, Piatetski-Shapiro and
Shalika \cite{JPSS}.
\end{enumerate}
\end{theo}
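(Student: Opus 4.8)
The plan is to handle the three assertions in turn: (i) sets up the analytic framework, (ii) is a local functional equation obtained from a multiplicity-one principle, and (iii) follows by unfolding the zeta integral into the Rankin--Selberg integral of Jacquet--Piatetski-Shapiro--Shalika. For (i), absolute convergence for $\mathrm{Re}(s)\gg 0$ should come from standard asymptotics: since $\pi$ is approximately tempered, the functions $\Phi_{\wsh}$ are dominated on the Cartan decomposition of $\gll_n$ by a gauge times a fixed power of $|\det|$, while a matrix coefficient $f$ of $\tau$ (a subrepresentation of $\tau_1\times\cdots\times\tau_m$) is dominated by a gauge times $|\det|^{-M}$ for some $M$; their product is integrable once $\mathrm{Re}(s)$ is large. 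Meromorphic continuation, and the claim that the span of the $Z(\wsh,s,f)$ is a fractional $\cc[q^{-s},q^s]$-ideal, should then come from Bernstein's continuation principle, the span being a fractional ideal because the ambient space of invariant functionals is finite-dimensional. Since a fractional ideal of $\cc[q^{-s},q^s]$ that contains a nonzero constant is automatically $P(q^{-s})^{-1}\cc[q^{-s},q^s]$ with $P(0)=1$, it remains to produce such a constant; this is the usual bump-function argument, taking $\Phi_{\wsh}$ supported near $1_n$ and $f$ concentrated near $1_n$ so that $Z(\wsh,s,f)$ degenerates to a nonzero number independent of $s$.

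For (ii), I would argue by uniqueness. First one records the equivariance of $\wsh\mapsto Z(\wsh,s,f)$: the Shalika equivariance of $\wsh$ gives a relation of the shape $\Phi_{\wsh}(hg)=\chi_\pi(\det h)\,\wsh(\diag(g,h^{-1}))$ for $h\in\gll_n$, which exhibits $Z(\cdot,s,\cdot)$, for $s$ outside a discrete set, as an element of a space of bilinear forms on $\pi_n\times\tau$ transforming in a prescribed way under the relevant $\gll_n$-action. The crucial input is that this space is at most one-dimensional for generic $s$, which I would deduce from the uniqueness of the Shalika model of $\pi_n$ combined with an $\gll_n$-multiplicity-one statement of the kind available in the Lapid--Mao framework. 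Granting this, both $Z(\wsh,s,f)$ and $Z(\widetilde\wsh,1-s,f(\cdot^{-1}))$ lie in the same one-dimensional space --- the passage $\wsh\mapsto\widetilde\wsh$ together with the conjugation by $\left(\begin{smallmatrix}0&1_n\\-1_n&0\end{smallmatrix}\right)$ being exactly what swaps the two $\gll_n$-blocks and turns the $s$-equivariance into the $(1-s)$-one --- so they are proportional by some $\gamma(s)$; rationality of $\gamma$ then follows because, by (i), both sides are rational in $q^{-s}$ as $(\wsh,f)$ vary.

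For (iii), assume $\tau$ irreducible and generic, and write a matrix coefficient of $\tau$ through Whittaker functions of $\tau$ and of $\tau^\vee$, via the Bernstein--Zelevinsky pairing over $N_{n-1}\backslash\gll_{n-1}$ (regularized if $\tau$ is not tempered). This turns $Z(\wsh,s,f)$ into an iterated integral whose inner integral, after the substitution $g\mapsto\iota(h)^{-1}g$ with $\iota(h)=\diag(h,1)$, becomes an integral against the values $\wsh(\diag(g,\iota(h)))$ --- that is, against the restriction of $\wsh$ to a block-diagonal $\gll_n\times\gll_n$. The point is then to insert Lapid--Mao's formula expressing such a restriction of the Shalika model of $\pi_n$ in terms of the Whittaker function of $\pi$ (an integral of products of $W_\pi$ over a unipotent subgroup) and to reorganize the unipotent integrations so that what emerges is precisely a JPSS integral for $\pi\boxtimes\tau$, up to an elementary factor. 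The unfolding then shows both that each $Z(\wsh,s,f)$ equals $L(s,\pi\boxtimes\tau)$ times an element of $\cc[q^{-s},q^s]$ and, conversely, that $L(s,\pi\boxtimes\tau)$ is attained among the $Z(\wsh,s,f)$ (reflecting the non-vanishing of JPSS integrals); combined with (i), this yields $L(\pi;s,\tau)=L(s,\pi\boxtimes\tau)$.

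The main obstacle is the unfolding in (iii): passing from the values $\wsh(\diag(g,\iota(h)))$ to a genuine JPSS integrand requires the explicit Lapid--Mao formulas and careful bookkeeping of the unipotent integrations and of the normalizing exponents, and --- because neither $\tau$ nor $\pi_n$ need be tempered --- the regularization of the Whittaker pairing and every interchange of integration has to be justified inside the range of absolute convergence and then propagated by meromorphic continuation. Secondary, but still delicate, are the non-vanishing claim in (i) and the one-dimensionality input in (ii), both of which rest on the fine structure of Shalika models rather than on soft arguments.
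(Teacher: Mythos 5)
Your plan for (iii) has a genuine gap, and it is exactly the point that constitutes the paper's main new contribution. Unfolding the matrix coefficient through Whittaker functions and restricting $\wsh$ to block-diagonal data does \emph{not} turn $Z(\wsh,s,f)$ into ``precisely a JPSS integral up to an elementary factor'': the computation (this is Proposition \ref{P:gen2} in the paper) produces a finite sum of Rankin--Selberg integrals for $\pi\boxtimes\tau$ \emph{together with} extra boundary terms in $\cc[q^{-s},q^s]$, and the Rankin--Selberg integrals that appear carry truncated data (factors such as $1_{x(\mathcal O,\mathcal O)}((\begin{smallmatrix}0&1\end{smallmatrix})g_2)$). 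So the unfolding only yields the containment $I(\pi,\tau)\subset L(s,\pi\boxtimes\tau)\cc[q^{-s},q^s]$; it gives no control in the other direction, and your claim that ``$L(s,\pi\boxtimes\tau)$ is attained among the $Z(\wsh,s,f)$'' is unsubstantiated. Indeed this reverse containment is precisely what was left open in the more general works of Ginzburg and Kaplan (whose Proposition C.10 gives only a divisibility), as Remarks \ref{R:sadsad} and \ref{R:sad} emphasize. The paper proves (iii) by a different route: reduce to $\tau=\mathrm{St}(\rho,m)$ by multiplicativity of the $L$-factor (Theorem \ref{T:main1}(iii), itself resting on Propositions \ref{P:red1} and \ref{P:red2}), establish the equality directly for $n=1,2$ and supercuspidal $\tau$ (Lemma \ref{L:12}), compare $\gamma(\pi;s,\tau,\psi)$ with the JPSS gamma factor via multiplicativity of $\gamma$ (Theorem \ref{T:main1}(v)), and then run a common-divisor analysis of $L(s,\pi\boxtimes\tau)^{-1}$ and $L(1-s,\pi^\vee\boxtimes\tau^\vee)^{-1}$, where the hypothesis that $\pi$ is approximately tempered is needed to exclude the residual case $\pi=\rho^{-1}|\cdot|^{t\pm n}\times\rho^{-1}|\cdot|^{t}$. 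None of this mechanism is present in your sketch.

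For (ii) your strategy (proportionality from a multiplicity-one statement, then rationality from (i)) is the paper's strategy, but the key one-dimensionality is not something you can ``deduce from the uniqueness of the Shalika model combined with an $\gll_n$-multiplicity-one statement available in the Lapid--Mao framework'': it is Proposition \ref{P:uniq}, proved only for \emph{supercuspidal} $\tau$, and its proof occupies all of Section \ref{S:fe} --- a Zelevinsky-derivative-style filtration of $\pi_n|_{D^{2n}_i}$ (Proposition \ref{P:zell}, Lemma \ref{L:tes}, Proposition \ref{P:fl}) combined with the Kirillov--Shalika model, whose bottom piece restricted to $\gll_n\times\gll_n$ is $\cind^{\gll_n\times\gll_n}_{\Delta\gll_n}\chi_\pi(\det)$, plus separate arguments for non-supercuspidal $\pi$ and for $n\le 2$. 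Because multiplicity one is only available for supercuspidal $\tau$, the paper first reduces the functional equation for general $\tau\in\mathrm{Alg'}\gll_n$ to the supercuspidal case via Proposition \ref{P:red1}(ii); your appeal to one-dimensionality ``for generic $s$'' for arbitrary $\tau$ is not established and skips this reduction. Your part (i) is essentially fine, though the paper argues convergence via the compact support of $\Phi_{\wsh}$ in $\mathrm M_n(F)$ and $L^2$-estimates together with Godement--Jacquet, and obtains continuation and bounded denominators from the explicit computation rather than from Bernstein's principle.
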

\begin{rem}\label{R:sadsad}
\begin{itemize}
\item We constructed the above zeta integral based on the work of Lapid and Mao \cite{lapid_mao_2020}, but in fact the integral had already been defined by Ginzburg (\cite{gin}) and Kaplan (Appendix C of \cite{cfk}) in a more general setting, namely for an irreducible generic representation $\pi$ of $\gll_k(F)$ for any $k$ and any local field $F$ of characteristic zero (either archimedean or $p$-adic), independently.
Then, the really new result in the above theorem is (iii) only
%Our new method for the proof of (iii) is probably not applicable to the general case
(see Remark \ref{R:sad}).
\item In recent years, some analogues of Rankin-Selberg zeta integrals using Speh representations were given by some researchers:
\begin{itemize}
\item Zeta integrals for Speh representations of type $(n,k)\times(n,k)$ were defined in  \cite{lapid_mao_2020}.
\item We can find zeta integrals for Speh representations of type $(n,k)\times(n-1,k)$ in the recent work of Atobe, Kondo, and Yasuda (\cite{aky}).
\item The zeta integrals defined in Appendix C of \cite{cfk} (and  \cite{gin}) are (essentially) for Speh representations of type $(n,1)\times(k,n)$.
\end{itemize}
Here, we say that the Speh representation $\mathrm {Sp}(\pi',l)$ is of type  $(m,l)$ if $\pi'$ is a (generic, irreducible) representation of $\gll_m$.
%(in this sense, \cite{JPSS} is a paper discussing the theory for Speh representations of type $(n,1)\times(n',1)$).
Then, in all three of the above papers, the determination of the L-factors was left unsolved (some partial results, such as  \cite[Proposition C.10]{cfk}, were given, see Remark \ref{R:sad}).
In contrast, our result (iii) is a fortunate example solving this problem, albeit only for Speh representations of type $(n,1)\times(2,n)$.
\end{itemize}
\end{rem}

Next we consider the general rank case.
Since 
$$\Phi_{\pi_n(\diag(g_1,g_2))\wsh}=\Phi_{\wsh} (g_2^{-1}\cdot g_1)\chi_\pi(\det  g_2)$$
 for any $\wsh\in \mathcal W^\psi_{\mathrm{Sh}}(\pi_n)$ and $g_1,g_2\in\gll_n$, the linear extension of $L(\pi;s,\tau^\vee)^{-1}Z(\cdot,s,\cdot)|_{s=\frac{1}{2}}$ on $W_{\mathrm{Sh}}(\pi_n)\otimes\tau^\vee\otimes\tau$ defines a nonzero element of 
 $$\Ho_{\gll_n\times \gll_n}(\pi_n\otimes(\tau^\vee\boxtimes\tau\chi_\pi^{-1}),\cc)\simeq\Ho_{\gll_n\times \gll_n}(\pi_n,\tau\boxtimes\tau^\vee\chi_\pi).$$
Then, by simple consideration (see \S\ref{S:grank}), the following holds:
\begin{theo}[Theorem \ref{T:main2}]\label{T:int2}
The space
$$\Ho_{\gll_n\times\gll_{n+2l}}(\pi_{n+l},\tau\boxtimes\tau^\vee\chi_\pi\times\pi_l)$$
is nonzero.
\end{theo}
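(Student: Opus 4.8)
\emph{Proof plan.} The plan is to reduce to the equal-rank case, Theorem~\ref{T:int1}, via Frobenius reciprocity after identifying enough of a Jacquet module of $\pi_{n+l}$. First I would let $P'$ be the standard parabolic of $\gll_{n+2l}$ with Levi $\gll_n\times\gll_{2l}$, so that $\tau^\vee\chi_\pi\times\pi_l=\ind_{P'}^{\gll_{n+2l}}(\tau^\vee\chi_\pi\boxtimes\pi_l)$ and hence, by transitivity of induction,
$$\tau\boxtimes(\tau^\vee\chi_\pi\times\pi_l)=\ind_{\gll_n\times P'}^{\gll_n\times\gll_{n+2l}}(\tau\boxtimes\tau^\vee\chi_\pi\boxtimes\pi_l).$$
By Frobenius reciprocity it then suffices to produce a nonzero $\gll_n\times\gll_n\times\gll_{2l}$-equivariant map from the Jacquet module $R(\pi_{n+l})$ — obtained by restricting $\pi_{n+l}$ to $\gll_n\times\gll_{n+2l}$ and applying the normalized Jacquet functor along $\gll_n\times P'$ — onto $\tau\boxtimes\tau^\vee\chi_\pi\boxtimes\pi_l$.

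Next I would show that $R(\pi_{n+l})$ surjects onto an explicit unramified twist of $(\pi_n\boxtimes\pi_l)|_{\gll_n\times\gll_n\times\gll_{2l}}$. Writing elements of $\gll_{2n+2l}$ in block sizes $(n,n,2l)$, the unipotent radical cut off by $\gll_n\times P'$ is the $n\times 2l$ block joining the second and third blocks, and this is contained in the $2n\times 2l$ block joining the first two blocks to the third, i.e.\ in the unipotent radical $N$ of the $(2n,2l)$-parabolic $P_2$ of $\gll_{2n+2l}$. Since passing to $N$-coinvariants is then a further quotient, $R(\pi_{n+l})$ maps $\gll_n\times\gll_n\times\gll_{2l}$-equivariantly onto a modulus twist of $(\pi_{n+l})_N$ restricted to $\gll_n\times\gll_n\times\gll_{2l}$. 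Now $(\pi_{n+l})_N$ is, up to $\delta_{P_2}^{1/2}$, the normalized Jacquet module of the Speh representation $\pi_{n+l}=\mathrm{Sp}(\pi,n+l)$ along the maximal parabolic of matching type; by the well-known formula for such Jacquet modules it equals $\pi_n|\cdot|^{-l/2}\boxtimes\pi_l|\cdot|^{n/2}$, so $(\pi_{n+l})_N=\pi_n|\cdot|^{l/2}\boxtimes\pi_l|\cdot|^{-n/2}$, whose restriction to $\gll_n\times\gll_n\times\gll_{2l}$ is $(\pi_n|\cdot|^{l/2})|_{\gll_n\times\gll_n}\boxtimes\pi_l|\cdot|^{-n/2}$.

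Finally I would feed in the equal-rank input: applying the nonvanishing of $\Ho_{\gll_n\times\gll_n}(\pi_n,\sigma\boxtimes\sigma^\vee\chi_\pi)$ that was derived from Theorem~\ref{T:int1} in the paragraph preceding this statement, taken with the irreducible representation $\sigma:=\tau|\cdot|^{-l/2}$, gives a nonzero $\gll_n\times\gll_n$-map $\pi_n\to\sigma\boxtimes\sigma^\vee\chi_\pi=\tau|\cdot|^{-l/2}\boxtimes\tau^\vee\chi_\pi|\cdot|^{l/2}$. Twisting this by $|\cdot|^{l/2}$ on $\gll_{2n}$, tensoring with the identity on the $\pi_l|\cdot|^{-n/2}$-factor, and composing with the surjections above produces a nonzero $\gll_n\times\gll_n\times\gll_{2l}$-map from $R(\pi_{n+l})$ onto a representation which — once the two modulus characters $\delta_{P_2}^{1/2}$ and $\delta_{\gll_n\times P'}^{1/2}$ are carried through — is exactly $\tau\boxtimes\tau^\vee\chi_\pi\boxtimes\pi_l$, all the unramified twists cancelling, and this is what was needed. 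The hard (but routine) part is precisely this cancellation of modulus characters; the one conceptual point to isolate is that the equal-rank morphism is a priori defined only on restrictions to a Levi and not on a Jacquet module, and it is the inclusion of unipotent radicals noted above — together with the fact that every map in sight is a coinvariant quotient — that allows it to descend through $R(\pi_{n+l})$.
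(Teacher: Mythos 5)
Your argument is, in substance, the adjoint reformulation of the paper's proof and it does go through, but one assertion in it is overstated. The paper stays on the induction side: it takes a nonzero $L\in\Ho_{\gll_n\times\gll_n}(\pi_n|\cdot|^{\frac l2},\tau\boxtimes\tau^\vee\chi_\pi|\cdot|^{l})$ (obtained exactly as your twist of the equal-rank map), applies $L\otimes\mathrm{id}$ pointwise to functions in $\pi_n|\cdot|^{-\frac l2}\times\pi_l|\cdot|^{\frac n2}$, restricts the resulting $\tilde L$ to the subrepresentation $\pi_{n+l}$ furnished by Remark \ref{R:sp} (this is where approximate temperedness enters), and checks $\tilde L|_{\pi_{n+l}}\neq0$ by evaluation at the identity together with the irreducibility of $\pi_n|\cdot|^{\frac l2}\boxtimes\pi_l|\cdot|^{-\frac n2}$. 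Your route through Frobenius reciprocity and coinvariants is the mirror image of this: the inclusion of unipotent radicals and the cancellation of modulus characters are correct as you set them up, and the quotient of the Jacquet module you need is precisely what the embedding of Remark \ref{R:sp} gives by adjunction. So the two proofs use the same inputs; yours just moves the verification from an evaluation-at-the-identity argument to a composition of coinvariant surjections.

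The flaw to repair is the claim that the normalized Jacquet module of $\pi_{n+l}$ along the $(2n,2l)$-parabolic \emph{equals} $\pi_n|\cdot|^{-\frac l2}\boxtimes\pi_l|\cdot|^{\frac n2}$. That equality holds when $\pi$ is supercuspidal (a single Zelevinsky segment), but it fails for general generic $\pi$: for instance if $\pi=\chi_1\times\chi_2$ is an irreducible principal series in general position, then $\pi_{n+l}\simeq\chi_1(\det)\times\chi_2(\det)$ and the geometric lemma produces several further constituents (splittings of the two segments at unequal points) besides $\pi_n|\cdot|^{-\frac l2}\boxtimes\pi_l|\cdot|^{\frac n2}$. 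What is true, and all your argument uses, is that $\pi_n|\cdot|^{-\frac l2}\boxtimes\pi_l|\cdot|^{\frac n2}$ is a \emph{quotient} of this normalized Jacquet module; this follows from Frobenius reciprocity applied to the embedding $\pi_{n+l}\hookrightarrow\pi_n|\cdot|^{-\frac l2}\times\pi_l|\cdot|^{\frac n2}$ of Remark \ref{R:sp}, and since $\pi_n|\cdot|^{-\frac l2}\boxtimes\pi_l|\cdot|^{\frac n2}$ is irreducible a nonzero map onto it is surjective. Note that this step, not only the equal-rank input from Theorem \ref{T:main1}, is where the hypothesis that $\pi$ is approximately tempered must be invoked; you should cite it explicitly rather than a general Jacquet-module formula. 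With the claim weakened to the quotient statement, your composition of surjections with the nonzero equal-rank map is still nonzero, and the proof is complete.
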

\begin{rem}
The above theorem only means that local Miyawaki lifts for split unitary groups are always nonvanishing.
To complete our purpose, there remain two problems, namely, uniqueness and multiplicity at most one (see Conjecture \ref{C:end}).
%Let $\tau'\in\mathrm{Irr}\gll_{n+2l}$.
%According to \cite[Proposition 2.3]{ATOBE2018281}, if $\pi$, $\tau$, and $\tau'$ are unramified and $\Ho_{\gll_n\times\gll_{n+2l}}(\pi_{n+l},\tau\boxtimes\tau')\neq0$, then $\tau'$ and $\tau^\vee\chi_\pi\times\pi_l$ have the same cuspidal support (we note that  if $\pi$ and $\tau$ are unramified and unitary, then $\tau^\vee\chi_\pi\times\pi_l$ is unramified and unitary).
%We expect that 
%$$\Ho_{\gll_n\times\gll_{n+2l}}(\pi_{n+l},\tau\boxtimes\tau')\neq0\Rightarrow \tau'=\tau^\vee\chi_\pi\times\pi_l$$
%under some proper conditions (Conjecture \ref{C:end}).
\end{rem}
We now give the organization of this paper.
In \S\ref{S:pre}, we introduce the notations, Speh representations, and the models of Speh representations according to \cite{lapid_mao_2020}.
In \S\ref{S:zeta},  we introduce and study the above zeta integral.
We show some properties which zeta integrals should have in general, and give the proof of Theorem \ref{T:int1}.
However, we postpone the proof of the functional equation, which is necessary to show Theorem \ref{T:int1}(iii), to the next section.
In \S\ref{S:fe}, we show the functional equation as just announced.
The essential of the section is the inequality $\dim_\cc\Ho_{\gll_n\times\gll_n}(\pi_n,\tau\boxtimes\tau^\vee\chi_\pi)\leq 1$ for supercuspidal $\tau$. 
In \S\ref{S:grank}, we make some remarks about the branching laws of Speh representations with respect to block diagonal subgroups of general size, including the proof of Theorem \ref{T:int2}.

\subsection*{Acknowledgment}
The author would like to thank his adviser Prof. Atsushi Ichino for his helpful advice.
He also would like to thank  Prof. Yuanqing Cai and  Prof. Eyal Kaplan for their pointing out the overlap of our work with some previous studies and for their helpful comments on this paper.

This work was supported by JSPS Research Fellowships for Young Scientists KAKENHI Grant Number 20J10875.

\section{Preliminaries}\label{S:pre}
\subsection{Notation}\label{SS:not}
Throughout this paper, fix a $p$-adic field $F$ with absolute value $|\cdot|$ and ring of integers $\mathcal O$.
Let $q$ be the cardinality of the residue field of $F$.
If $G$ is an algebraic group over $F$, we also use $G$ to denote $G(F)$.
The term `representation' is used to refer to a smooth, complex representation of an algebraic group over $F$.

We denote by $\mathrm{Irr}\gll_m$ the set of equivalence classes of irreducible representations of $\gll_m$ and put 
$\mathrm{Irr}=\bigcup_{0\leq m}\mathrm{Irr}\gll_m$.
We denote by $\mathrm{Irr_{\rm gen}}\gll_m$ (resp. $\mathrm{Irr_{\rm sc}}\gll_m$) the subset consisting of all generic (resp. supercuspidal)  elements  of $\mathrm{Irr}\gll_m$ and put $\mathrm{Irr_{\rm gen}}=\bigcup_{0\leq m}\mathrm{Irr_{\rm gen}}\gll_m$, $\mathrm{Irr_{\rm sc}}=\bigcup_{0\leq m}\mathrm{Irr_{\rm sc}}\gll_m$.

For $\bm m = (m_1,\dots, m_l)\in(\zz_{>0})^l$, we denote the block upper triangular parabolic subgroup of type $\bm m$ with unipotent radical $U_{\bm m}$ by $P_{\bm m}$, a Levi subgroup $\diag(\gll_{m_1},\dots, \gll_{m_l})\simeq \gll_{m_1}\times\dots\times \gll_{m_l}$ of $P_{\bm m}$ by $M_{\bm m}$, and the modulus character of $P_{\bm m}$ by $\delta_{P_{\bm m}}$.

We use the notation $\ind^G_H$ and $\cind^G_H$ to denote induction and induction with compact support (both {\it unnormalized}) from a subgroup $H$ of $G$.
If $\pi_1,\dots,\pi_l$ are representations of $\gll_{m_1},...,\gll_{m_l}$ respectively, then we denote the parabolically induced representations 
\begin{align*}
\ind^{\gll_{m}}_{P_{\bm m}}\delta_{P_{\bm m}}^{\frac{1}{2}}\otimes\pi_1\boxtimes\dots\boxtimes\pi_l  \text{ and } \ind^{\gll_{m}}_{P_{\bm m}}\pi_1\boxtimes\dots\boxtimes\pi_l
\end{align*}
by $\pi_1\times\dots\times\pi_l$ (normalized induction) and $\pi_1*\dots*\pi_l$ (unnormalized induction), respectively, where $m=\sum_i m_i$ and  \mbox{$\bm m=(m_1,\dots,m_l)$}.

If there is no confusion, we often denote $(\overbrace{m_1,m_1,\dots,m_1}^{l_1},\overbrace{m_2,m_2,\dots,m_2}^{l_2},\dots)$ by $({m_1}^{l_1},{m_2}^{l_2},\dots)$.

Let $\pi$ be a representation of $\gll_m$.
We denote the contragradient representation of $\pi$ by $\pi^\vee$ and $\pi\otimes\chi\circ\det$ by $\pi \chi$ for any $\chi\in \mathrm{Irr}\gll_1$.
%For any parabolic subgroup $P$ of $\gll_m$ defined over $F$, we denote the normalized Jacquet module of $\pi$ with respect to $P$ by $J_P(\pi)$.

For $\pi\in{\rm Irr_{\rm sc}}$, we denote the unique irreducible subrepresentation of $\pi|\cdot|^{\frac{m-1}{2}}\times\pi|\cdot|^{\frac{m-3}{2}}\times\dots\times\pi|\cdot|^{-\frac{m-1}{2}}$
by ${\rm St}(\pi,m)$ (generalized Steinberg representation).

We denote by $\mathrm{Alg'}\gll_m$ the set of equivalence classes of representations $\pi\neq 0$ of $\gll_m$ such that 
$$\pi\subset \pi_1\times\dots\times\pi_l$$
 for some $\pi_1,\dots,\pi_l\in{\rm Irr}$ (equivalently, $\pi_1,\dots,\pi_l\in{\rm Irr_{\rm sc}}$) and put $\mathrm{Alg'}=\bigcup_{0\leq m}\mathrm{Alg'}\gll_m$.
We note that $\mathrm{Alg'}$ is closed under parabolic induction.
For any $\pi\in\mathrm{Alg'}$, we denote the central character of $\pi$ by $\chi_\pi$.

Let $\pi\in\mathrm{Irr_{\rm gen}}$.
Then, $\pi$ can be written uniquely (up to permutation) as 
$$\pi=\mathrm{St}(\rho_1,m_1)|\cdot|^{r_1}\times\dots\times\mathrm{St}(\rho_{l},m_{l})|\cdot|^{r_{l}}$$
with cuspidal unitary representations $\rho_i$ and $r_i\in\rr$.
We say that $\pi$ is approximately tempered  if $r_i-r_j<1$ for any $i$ and $j$, following \cite{lapid_mao_2020}.
We note that if $\pi\in\mathrm{Irr_{\rm gen}}$ is essentially unitary, then it is approximately tempered.

\subsection{Speh representations (cf. \cite[\S2]{lapid_mao_2020})}\label{SS:sp}
For the rest of this section, fix $\pi\in\mathrm{Irr_{\rm gen}}\gll_k$.

Let $\{\rho_1,\dots,\rho_r\} \ (\rho_i \in {\rm Irr_{sc}})$ be the cuspidal support of $\pi$, which is a multiset of $\mathrm{Irr_{\rm sc}}$.
For each $n\in\zz_{>0}$, we define the Speh representation $\mathrm{Sp}(\pi, n)$ as the representation corresponding to the multisegment 
$$\sum_{i=1}^r\{\rho_i|\cdot|^{-\frac{n-1}{2}},\dots,\rho_i|\cdot|^{\frac{n-1}{2}}\}$$
under the Zelevinsky classification \cite{zel}.
By rearranging the indices, we assume that $\rho_1,\dots,\rho_{i-1}\neq \rho_i |\cdot|^{-m}$ for all $i$ and all $m\in\zz_{>0}$.
Then, $\mathrm{Sp}(\pi, n)$ is the unique irreducible subrepresentation of 
$$\mathrm{Sp}(\rho_1,n)\times\dots\times \mathrm{Sp}(\rho_r,n),$$
where $\mathrm{Sp}(\rho_i,n)$ is the unique irreducible quotient of  $\rho_i|\cdot|^{\frac{n-1}{2}}\times\rho_i|\cdot|^{\frac{n-3}{2}}\times\dots\times \rho_i|\cdot|^{-\frac{n-1}{2}}$.

Other realizations of $\mathrm{Sp}(\pi, n)$ are known:

\begin{prop}[{\cite[Corollary 2.11]{lapid_mao_2020}}]\label{P:sp}
$\mathrm{Sp}(\pi, n)$ is a unique irreducible subrepresentation of
$$\mathrm{Sp}(\pi, n-1)|\cdot|^{-\frac{1}{2}}\times \pi|\cdot|^{\frac{n-1}{2}}.$$
In particular,
$\mathrm{Sp}(\pi, n)$ is both a subrepresentation of
$$\Pi=\pi|\cdot|^{-\frac{n-1}{2}}\times\pi|\cdot|^{-\frac{n-3}{2}}\times\dots\times \pi|\cdot|^{\frac{n-1}{2}}$$
and a quotient of
$$\tilde\Pi=\pi|\cdot|^{\frac{n-1}{2}}\times\pi|\cdot|^{\frac{n-3}{2}}\times\dots\times \pi|\cdot|^{-\frac{n-1}{2}}.$$
\end{prop}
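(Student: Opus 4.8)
The plan is to argue entirely inside the Zelevinsky classification. Write $\nu=|\cdot|$, let $\{\rho_1,\dots,\rho_r\}\subset\mathrm{Irr}_{\mathrm{sc}}$ be the cuspidal support of $\pi$, and denote by $\mathfrak m$ the Zelevinsky datum of $\mathrm{Sp}(\pi,n)$, so that $\mathfrak m=\sum_{i=1}^{r}[\rho_i\nu^{-(n-1)/2},\rho_i\nu^{(n-1)/2}]$ and $\mathrm{Sp}(\pi,n)=Z(\mathfrak m)$. Since $\pi$ is generic, $\pi=Z(\{[\rho_1],\dots,[\rho_r]\})$; hence $\mathrm{Sp}(\pi,n-1)\nu^{-1/2}=Z(\mathfrak a)$ with $\mathfrak a:=\sum_i[\rho_i\nu^{-(n-1)/2},\rho_i\nu^{(n-3)/2}]$ and $\pi\nu^{(n-1)/2}=Z(\mathfrak b)$ with $\mathfrak b:=\sum_i[\rho_i\nu^{(n-1)/2}]$ a sum of singletons, and $\mathfrak m$ is obtained from the multisegment $\mathfrak a+\mathfrak b$ by gluing each singleton of $\mathfrak b$ onto the end of the corresponding segment of $\mathfrak a$. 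Put $\sigma:=\mathrm{Sp}(\pi,n-1)\nu^{-1/2}\times\pi\nu^{(n-1)/2}=Z(\mathfrak a)\times Z(\mathfrak b)$; it has finite length, hence a nonzero semisimple socle.

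\emph{Step 1: the socle of $\sigma$ is $\mathrm{Sp}(\pi,n)$.} I would take an arbitrary irreducible submodule $\delta=Z(\mathfrak n)$ of $\sigma$. By Frobenius reciprocity,
\[
0\neq\mathrm{Hom}_{\gll_{kn}}(\delta,\sigma)\cong\mathrm{Hom}_{\gll_{k(n-1)}\times\gll_k}\big(r(\delta),\ Z(\mathfrak a)\otimes Z(\mathfrak b)\big),
\]
where $r$ is the normalized Jacquet functor along $P_{(k(n-1),k)}$, so $Z(\mathfrak a)\otimes Z(\mathfrak b)$ is a quotient of $r(\delta)$. Zelevinsky's formula presents $r(Z(\mathfrak n))$ as a layered object indexed by the ways of splitting off from each segment of $\mathfrak n$ a final subsegment of prescribed total degree; since $Z(\mathfrak a)$ and $Z(\mathfrak b)$ are irreducible with unique Zelevinsky data and $\mathfrak b$ lives at the single level $\nu^{(n-1)/2}$, any layer surjecting onto $Z(\mathfrak a)\otimes Z(\mathfrak b)$ must have right datum $\mathfrak b$ and left datum $\mathfrak a$, which leaves only the multisegments $\mathfrak n$ obtained from $\mathfrak a+\mathfrak b$ by gluing some sub-collection of the singletons back onto their neighbouring segments. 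The requirement that $Z(\mathfrak a)\otimes Z(\mathfrak b)$ be a \emph{quotient} of $r(\delta)$, not merely a subquotient---equivalently, the behaviour of the ``precedes'' relation between the segments of $\mathfrak a$ and the singletons of $\mathfrak b$---then forces the maximally glued one, $\mathfrak n=\mathfrak m$, so $\delta\cong\mathrm{Sp}(\pi,n)$. Thus every irreducible submodule of $\sigma$ is $\cong\mathrm{Sp}(\pi,n)$; as a multiplicity computation in the Grothendieck group (from $[\sigma]=[Z(\mathfrak a)]\cdot[Z(\mathfrak b)]$) shows $\mathrm{Sp}(\pi,n)$ occurs in $\sigma$ exactly once, the socle of $\sigma$ is $\mathrm{Sp}(\pi,n)$, which is the first assertion.

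\emph{Step 2: the ``in particular'' assertions.} Iterating the first assertion and using only exactness of parabolic induction and associativity of $\times$---no factor ever has to be moved past another---one gets
\[
\mathrm{Sp}(\pi,n)\hookrightarrow\mathrm{Sp}(\pi,n-1)\nu^{-1/2}\times\pi\nu^{(n-1)/2}\hookrightarrow\cdots\hookrightarrow\pi\nu^{-(n-1)/2}\times\cdots\times\pi\nu^{(n-1)/2}=\Pi.
\]
Applying the same embedding with $\pi^\vee$ in place of $\pi$, taking contragredients, and using $\mathrm{Sp}(\pi^\vee,n)^\vee\cong\mathrm{Sp}(\pi,n)$ together with $(\mu_1\times\cdots\times\mu_t)^\vee\cong\mu_1^\vee\times\cdots\times\mu_t^\vee$ yields that $\mathrm{Sp}(\pi,n)$ is a quotient of $\pi\nu^{(n-1)/2}\times\cdots\times\pi\nu^{-(n-1)/2}=\tilde\Pi$.

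The step I expect to be the main obstacle is the passage, within Step 1, from ``$Z(\mathfrak a)\otimes Z(\mathfrak b)$ is a quotient of $r(Z(\mathfrak n))$'' to ``$\mathfrak n=\mathfrak m$''. Identifying the two Zelevinsky data is routine, but ruling out the partially-glued candidates requires knowing which layers in Zelevinsky's filtration of $r(Z(\mathfrak n))$ actually admit a surjection onto a prescribed irreducible---being a subquotient is not enough---and this forces a careful analysis of the linkedness/``precedes'' combinatorics of multisegments. Genericity of $\pi$, which makes $\mathfrak b$ a sum of singletons at a single level, is precisely what keeps the list of candidate $\mathfrak n$ short and the whole argument tractable.
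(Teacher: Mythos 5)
You should first note that the paper contains no proof of Proposition \ref{P:sp} at all: it is imported verbatim from \cite[Corollary 2.11]{lapid_mao_2020}, so your argument has to stand entirely on its own, and it cannot simply re-derive the statement from the very facts whose proof in Lapid--Mao it is meant to replace. Your Step 2 is fine (the iterated embedding never requires exchanging factors, and the duality argument via $\mathrm{Sp}(\pi^\vee,n)^\vee\cong\mathrm{Sp}(\pi,n)$ and $(\mu_1\times\cdots\times\mu_t)^\vee\cong\mu_1^\vee\times\cdots\times\mu_t^\vee$ is standard), and in Step 1 the reduction, via Frobenius reciprocity, to the claim ``if $Z(\mathfrak a)\otimes Z(\mathfrak b)$ is a quotient of $r(Z(\mathfrak n))$ for a constituent $Z(\mathfrak n)$ of $\sigma$, then $\mathfrak n=\mathfrak m$'' is the right reduction. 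But that claim is exactly where the whole content of the proposition sits, and you do not prove it: you assert that the ``precedes'' combinatorics ``forces the maximally glued one'' and then flag this yourself as the expected main obstacle. Concretely, for a partially glued $\mathfrak n\neq\mathfrak m$ the layer $Z(\mathfrak a)\otimes Z(\mathfrak b)$ genuinely occurs as a subquotient of the geometric-lemma filtration (already for the standard module containing $Z(\mathfrak n)$), and Zelevinsky's formula only filters the Jacquet module of the standard module; it says nothing about which irreducibles are \emph{quotients} of $r(Z(\mathfrak n))$ itself. Ruling the partially glued data out as submodules of $\sigma$ is precisely the nontrivial socle statement that Lapid--Mao obtain from their machinery (ladder representations and Lapid--M\'inguez-type irreducibility-of-socle results), so as written the proposal reduces the proposition to its hardest step without supplying an argument.

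A secondary unproved point: the multiplicity-one assertion ``$\mathrm{Sp}(\pi,n)$ occurs in $\sigma$ exactly once, from $[\sigma]=[Z(\mathfrak a)][Z(\mathfrak b)]$'' is not a formal consequence of the Grothendieck-group identity; Zelevinsky's product theorem gives multiplicity one for the \emph{minimal} datum $\mathfrak a+\mathfrak b$, not for the maximal gluing $\mathfrak m$. Both this and the irreducibility of the socle can in fact be obtained by a cleaner count: using the geometric lemma and the fact that the support of $\mathfrak b$ sits at the top exponent $\nu^{(n-1)/2}$ on each cuspidal line, one gets $[r_{(k(n-1),k)}(\sigma):Z(\mathfrak a)\otimes Z(\mathfrak b)]=1$, while by Frobenius every irreducible submodule of $\sigma$ contributes at least $1$ to this count; hence the socle is irreducible and occurs with multiplicity one. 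Even so, identifying that socle with $\mathrm{Sp}(\pi,n)$ still requires either producing the embedding $\mathrm{Sp}(\pi,n)\hookrightarrow\sigma$ (which is the statement being proved; the naive regrouping of $\prod_i\mathrm{Sp}(\rho_i,n)$ runs into linked factors when cuspidals repeat) or showing that $Z(\mathfrak a)\otimes Z(\mathfrak b)$ occurs in $r(\mathrm{Sp}(\pi,n))$ and that $\mathrm{Sp}(\pi,n)$ is a constituent of $\sigma$ --- that is, the combinatorial exclusion you postponed cannot be sidestepped. Until that step is carried out, the proposal is a plausible strategy, not a proof.
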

\begin{rem}[{{\cite[Remark 2.12]{lapid_mao_2020}}}]\label{R:sp}
Assume $\pi$ is approximately tempered.
Then, $\mathrm{Sp}(\pi, n)$ is the Langlands quotient of $\tilde\Pi$.
In particular, $\mathrm{Sp}(\pi, n)$ is the unique subrepresentation of $\mathrm{Sp}(\pi, n_1)|\cdot|^{-\frac{n_2}{2}}\times\mathrm{Sp}(\pi, n_2)|\cdot|^{\frac{n_1}{2}}$ for any $n_1,n_2\in \pzz$ such that $n_1+n_2=n$.
%Additionally, this fact guarantees that for any Ikeda lift of any unitary group $G$ (see \S \ref{SSS:rtil}), the local component of it at any finite place where $G$ splits is the Speh representation associated to some generic representation of $\gll_2$.
\end{rem}
From now on, we will denote $\mathrm{Sp}(\pi, n)$ by $\pi_n$ for short.

\subsection{The models (cf. \cite[\S3]{lapid_mao_2020})}\label{SS:mod}
For the rest of this section, fix a nontrivial character $\psi$ of $F$.

For each $n\in\zz_{>0}$, we define $w_{(k,n)}\in\gll_{kn}$ by
$$(w_{(k,n)})_{i,j}=\delta_{k(i-pn+n-1)+p, j} \ \text{ if } pn-n+1\leq i \leq pn$$
for $p\in\{1,\dots,k\}$ and  a function $\Psi_{(k,n)}:\gll_{kn} \map \cc$ by 
$$\Psi_{(k,n)}(g)=\psi\left(\sum_{k  \nmid i}  g_{i,i+1}\right).$$
Then, the restrictions of $\Psi_{(k,n)}$ to $U_{(1^{kn})}$ and $U_{(n^k)}^{w_{(k,n)}}=w_{(k,n)}^{-1}U_{(n^k)} w_{(k,n)}$ are both characters.
We note that  
$$\Psi_{(k,n)}\left(
\begin{pmatrix}
1_n&X_1&&&\\
&1_n&X_2&&\raisebox{-6pt}[0pt][0pt]{\makebox[-10pt][r]{\Huge *}}\\
&&\ddots&\ddots&\\
&&&1_n&X_{k-1}\\
&&&&1_n
\end{pmatrix}
^{w_{(k,n)}}\right)=\psi\left(\sum_{1\leq i \leq k-1}{\rm tr}X_i\right),$$
where $X_i\in {\rm M}_k(F)$.
We put
$$\mathcal W^\psi_{\mathrm{Ze},k,n}=\ind^{\gll_{kn}}_{U_{(1^{kn})}}\Psi_{(k,n)}|_{U_{(1^{kn})}} \ \text{ and } \ \mathcal W^\psi_{\mathrm{Sh},k,n}=\ind^{\gll_{kn}}_{U_{(n^k)}^{w_{(k,n)}}}\Psi_{(k,n)}|_{U_{(n^k)}^{w_{(k,n)}}}.$$
%\begin{align*}
%\mathcal W^\psi_{\mathrm{Ze},k,l}&=\left\{W: \gll_{kl}\map \cc \ \middle | \ W: \text{smooth, } W(u\cdot)
%=\Psi(u)W \ (\forall u \in U_{(1^{kl})})  \right \}.
%\end{align*}
%and
%\begin{align*}
%\mathcal W^\psi_{\mathrm{Sh},k,l}&=\left\{W: \gll_{kl}\map \cc \ \middle | \ W: \text{smooth, } W(u\cdot)
%=\Psi(u)W \ (\forall u \in w_{(k,l)}^{-1}U_{(l^k)} w_{(k,l)})  \right \}.
%\end{align*}
%Here, $$W: \text{smooth} \overset{\rm def}{\Leftrightarrow} \exists K\subset \gll_{2m} : \text{open compact subgroup}, W(\cdot k)=W (\forall k\in K).$$
%We note that $\mathcal W^\psi_{\mathrm{Sh},k,m}|\cdot|^t \simeq \mathcal W^\psi_{\mathrm{Sh},k,m}$ for any $t\in\cc$.
Then, it is known that $\dim_\cc(\pi_n, \mathcal W^\psi_{\mathrm{Ze},k,n})=\dim_\cc(\pi_n, \mathcal W^\psi_{\mathrm{Sh},k,n})=1$ (\cite[Theorem 3.1]{lapid_mao_2020}) .
We denote the images of $\pi_n$ on $\mathcal W^\psi_{\mathrm{Ze},k,n}$ and $\mathcal W^\psi_{\mathrm{Sh},k,n}$ by $\mathcal W^\psi_{\mathrm{Ze}}(\pi_n)$ (Zelevinsky model) and $\mathcal W^\psi_{\mathrm{Sh}}(\pi_n)$ (Shalika model), respectively.
We note that
$$\wsh(\diag(\overbrace{g,\dots,g}^{k})^{w_{(k,n)}} \cdot )=\chi_\pi(\det g)\wsh, \ g\in \gll_n$$
 for any $\wsh\in \mathcal W^\psi_{\mathrm{Sh}}(\pi_n)$.
 
The relation between $\mathcal W^\psi_{\mathrm{Ze}}(\pi_n)$ and $\mathcal W^\psi_{\mathrm{Sh}}(\pi_n)$ is as follows:
\begin{prop}[{{\cite[Lemma 3.8, 3.11]{lapid_mao_2020}}}]\label{P:zs}
Let $\mathfrak n_n$ (resp. $\bar{\mathfrak n}_n=\!^t{\mathfrak n}_n$) be the set of  upper (resp. lower) triangular nilpotent matrices in $\mathrm M_n(F)$ and
$$N_n=
w_{(k,n)}^{-1}\begin{pmatrix}
1_n& \bar{\mathfrak n}_n&\bar{\mathfrak n}_n&\dots&\bar{\mathfrak n}_n\\
&1_n&\bar{\mathfrak n}_n&\dots&\bar{\mathfrak n}_n\\
&&\ddots&\ddots&\vdots\\
&&&1_n&\bar{\mathfrak n}_n\\
&&&&1_n
\end{pmatrix}w_{(k,n)}, \
N'_n=1_{kn}+
w_{(k,n)}^{-1}\begin{pmatrix}
 {\mathfrak n}_n&&&&\\
{\mathfrak n}_n&{\mathfrak n}_n&&&\\
\vdots&\vdots&\ddots&&\\
{\mathfrak n}_n&{\mathfrak n}_n&\dots&{\mathfrak n}_n&\\
0_n&0_n&\dots&\dots&0_n
\end{pmatrix}w_{(k,n)}
.$$
Then, the isomorphism $\mathcal T_n: \mathcal W^\psi_{\mathrm{Ze}}(\pi_n)\overset{\sim}{\map}\mathcal W^\psi_{\mathrm{Sh}}(\pi_n)$ and its inverse  $\mathcal T_n^{-1}: \mathcal W^\psi_{\mathrm{Sh}}(\pi_n)\overset{\sim}{\map}\mathcal W^\psi_{\mathrm{Ze}}(\pi_n)$ are given by
\begin{align*}
\mathcal T_n W_{\rm Ze}=\int_{N_n}W_{\rm Ze}(u\cdot)du, \ \mathcal T_n^{-1} W_{\rm Sh}=\int_{N'_n}W_{\rm Sh}(u'\cdot)du' \end{align*}
for $W_{\rm Ze} \in\mathcal W^\psi_{\mathrm{Ze}}(\pi_n)$ and $\wsh\in \mathcal W^\psi_{\mathrm{Sh}}(\pi_n)$, where these integrands are (pointwise) compactly supported.
\end{prop}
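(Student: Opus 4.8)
The plan is to treat the two displayed formulas as \emph{a priori} candidate maps, to show each is a well-defined, $\gll_{kn}$-equivariant map into the relevant induced module landing in the Shalika/Zelevinsky submodule, and finally to check directly that $\mathcal T_n$ and $\mathcal T_n^{-1}$ are mutually inverse. The isomorphism assertions then follow formally: since $\pi_n=\mathrm{Sp}(\pi,n)$ is irreducible and $\dim_\cc(\pi_n,\mathcal W^\psi_{\mathrm{Ze},k,n})=\dim_\cc(\pi_n,\mathcal W^\psi_{\mathrm{Sh},k,n})=1$ by \cite[Theorem 3.1]{lapid_mao_2020}, any \emph{nonzero} $\gll_{kn}$-map out of $\mathcal W^\psi_{\mathrm{Ze}}(\pi_n)$ (resp. $\mathcal W^\psi_{\mathrm{Sh}}(\pi_n)$) into $\mathcal W^\psi_{\mathrm{Sh},k,n}$ (resp. $\mathcal W^\psi_{\mathrm{Ze},k,n}$) is injective with image the unique copy $\mathcal W^\psi_{\mathrm{Sh}}(\pi_n)$ (resp. $\mathcal W^\psi_{\mathrm{Ze}}(\pi_n)$). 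Thus it suffices to establish: (a) for $\wze\in\mathcal W^\psi_{\mathrm{Ze}}(\pi_n)$ (resp. $\wsh\in\mathcal W^\psi_{\mathrm{Sh}}(\pi_n)$) and $g\in\gll_{kn}$, the integrand $u\mapsto\wze(ug)$ on $N_n$ (resp. $u'\mapsto\wsh(u'g)$ on $N'_n$) is compactly supported, and the resulting function lies in $\mathcal W^\psi_{\mathrm{Sh},k,n}$ (resp. $\mathcal W^\psi_{\mathrm{Ze},k,n}$); and (b) $\mathcal T_n^{-1}\circ\mathcal T_n=\mathrm{id}$ on $\mathcal W^\psi_{\mathrm{Ze}}(\pi_n)$ for a suitable normalization of the Haar measures on $N_n$ and $N'_n$ (this simultaneously forces $\mathcal T_n\ne0$, hence, by the previous sentence, that $\mathcal T_n$ is the claimed isomorphism with inverse $\mathcal T_n^{-1}$, and then $\mathcal T_n\circ\mathcal T_n^{-1}=\mathrm{id}$ as well).

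For (a), the compact-support statement rests on a majorization (``gauge'') estimate for functions in the degenerate Whittaker model $\mathcal W^\psi_{\mathrm{Ze}}(\pi_n)$ of the Speh representation, controlling their decay in the relevant root directions of $N_n$; this is of the type needed to make sense of Shalika-model integrals in the first place, and I would invoke the version proved in \cite{lapid_mao_2020}. The core of (a) is the equivariance, which I would prove by a root-exchange argument. One first records the combinatorial facts that $N_n\subseteq U_{(n^k)}^{w_{(k,n)}}$ (after conjugating by $w_{(k,n)}$ the element of $N_n$ is block-upper-unitriangular with $n\times n$ blocks, the off-diagonal ones lying in $\bar{\mathfrak n}_n$) and that $\Psi_{(k,n)}$ is trivial on $N_n$ (its value on such an element is $\psi(\sum_i\operatorname{tr}X_i)=1$ since the $X_i\in\bar{\mathfrak n}_n$ are traceless, by the displayed evaluation of $\Psi_{(k,n)}$ on $U_{(n^k)}^{w_{(k,n)}}$), and dually that $\Psi_{(k,n)}$ is trivial on $N'_n$ (the entries $g_{i,i+1}$ with $k\nmid i$ of an element of $N'_n$ fall in the vanishing super-block-diagonal positions of the matrix in the definition of $N'_n$). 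One then fixes a complementary family $R_n$ of root subgroups with $U_{(n^k)}^{w_{(k,n)}}=N_nR_n$ and, for $v\in R_n$, transports $v$ to the left of the $N_n$-integral: the commutator of $v$ with $u\in N_n$ together with a compensating change of variables inside $N_n$ produces exactly the factor $\Psi_{(k,n)}(v)$, giving $\int_{N_n}\wze(uvg)\,du=\Psi_{(k,n)}(v)\int_{N_n}\wze(ug)\,du$; the identical scheme with $N'_n$ in place of $N_n$ and the ambient unipotent $U_{(1^{kn})}$ handles $\mathcal T_n^{-1}$.

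For (b), I would compute, for $\wze\in\mathcal W^\psi_{\mathrm{Ze}}(\pi_n)$ and $g\in\gll_{kn}$,
$$(\mathcal T_n^{-1}\mathcal T_n\wze)(g)=\int_{N'_n}\int_{N_n}\wze(u'ug)\,du\,du',$$
and show it equals $\wze(g)$. The groups $N_n$ and $N'_n$ are built from complementary families of root subgroups whose commutator pairing is valued, through $\Psi_{(k,n)}$, nondegenerately, and $\wze$ is already $U_{(1^{kn})}$-equivariant against $\Psi_{(k,n)}$; reorganizing the double integral and integrating out the resulting ``dual'' pair of variables collapses it, by orthogonality of additive characters, to the single value $\wze(g)$. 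The main obstacle throughout is precisely this combinatorics of $N_n$ and $N'_n$: verifying the inclusions into and complementation within the ambient unipotent radicals, the triviality of $\Psi_{(k,n)}$ on them, and the bookkeeping of commutators so that both the root-exchange in (a) and the Fourier inversion in (b) come out exactly. A variant that trades the global bookkeeping for an inductive one is to induct on $n$ via Proposition \ref{P:sp}: the realization $\pi_n\hookrightarrow\pi_{n-1}|\cdot|^{-\frac12}\times\pi|\cdot|^{\frac{n-1}{2}}$ expresses functions in the $n$-th models through functions in the $(n-1)$-st models and Whittaker functions of $\pi$, the groups $N_n,N'_n$ factor compatibly with $\gll_{kn}\supset\gll_{k(n-1)}\times\gll_k$ into $N_{n-1},N'_{n-1}$ and a ``new'' block of root subgroups, and one closes the induction using the hypothesis together with the base case $n=1$, where $w_{(k,1)}=1_k$, $N_1=N'_1=\{1\}$, and both models coincide with the $\psi$-Whittaker model of $\pi$.
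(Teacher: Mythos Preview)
The paper does not prove this proposition: it is stated with attribution to \cite[Lemmas 3.8 and 3.11]{lapid_mao_2020} and used as input, so there is no in-paper proof to compare against. Your outline is essentially the strategy Lapid--Mao follow: prove compact support of the integrands via gauge-type estimates on the degenerate Whittaker model, verify by a root-exchange computation that the integrated function has the correct transformation property under the target unipotent radical, and conclude using irreducibility of $\pi_n$ together with multiplicity one for both models.

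One caution on your step (b): the direct ``Fourier inversion'' collapse of $\int_{N'_n}\int_{N_n}\wze(u'ug)\,du\,du'$ to $\wze(g)$ is more delicate than you suggest. The groups $N_n$ and $N'_n$ are not literally dual unipotent subgroups paired against each other by $\Psi_{(k,n)}$; rather $N_n\subset U_{(n^k)}^{w_{(k,n)}}$ while $N'_n$ is a complement to $U_{(n^k)}^{w_{(k,n)}}\cap U_{(1^{kn})}$ inside $U_{(1^{kn})}$ (roughly speaking), and the cancellation happens by moving pieces of $u'$ past $u$ and using the $U_{(1^{kn})}$-equivariance of $\wze$ step by step, not by a single global orthogonality relation. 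In practice Lapid--Mao avoid computing the composite directly: they prove each transform is well-defined and nonzero, invoke multiplicity one so that each is an isomorphism, and then the composite is a nonzero scalar on the irreducible $\pi_n$, which one normalizes to $1$ by choice of Haar measures. Your inductive variant via Proposition~\ref{P:sp} is a legitimate alternative bookkeeping device for the same argument.
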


Now we consider `intermediate' models between $\mathcal W^\psi_{\mathrm{Ze}}(\pi_n)$ and $\mathcal W^\psi_{\mathrm{Sh}}(\pi_n)$.
Take
\begin{align*}
\bm \lambda=(\lambda_1,\dots,\lambda_{nk})=(\overbrace{\overbrace{k-1,k-2,\dots,0},\overbrace{k-1,k-2,\dots,0},\dots,\overbrace{k-1,k-2,\dots,0}}^{n})+(\lambda_k^k,\lambda_{2k}^k,\dots,\lambda_{nk}^k)\in\zz^{nk}
\end{align*}
and define a parabolic subgroup $P=MU$ of $\gll_{kn}$ by
$\large P=\left\{ g\in \mathrm \gll_{kn}\ \middle | \ g_{i,j}=0 \text{ if } \lambda_i < \lambda_j\right\}$.
Then, the restriction of $\Psi_{(k,n)}$ to $U$ is a character and
$\dim_\cc(\pi_n, \ind^{\gll_{nk}}_{U}\Psi_{(k,n)}|_U)=1$ in general (\cite[Theorem 3.1]{lapid_mao_2020}). 
Take $\bm n = (n_1,\dots ,n_l)\in (\zz_{>0})^l$ such that $\sum_in_i=n$ and assume that 
$$\lambda_{ik}=-jk \ \text{ if } \  \sum_{p=1}^jn_p< i\leq\sum_{p=1}^{j+1}n_p$$
for $j\in\{0,\dots,l-1\}$.
Then, we have 

$$P=\diag(P_{(n_1^k)}^{w_{(k,n_1)}},\dots,P_{(n_l^k)}^{w_{(k,n_l)}})U_{\bm n}$$
and
$$\Psi|_U=(\Psi_{(k,n_1)}|_{U_{(n_1^k)}^{w_{(k,n_1)}}}\boxtimes\dots\boxtimes \Psi_{(k,n_l)}|_{U_{(n_l^k)}^{w_{(k,n_l)}}})\otimes1_{U_{\bm n}}.$$
For this $\bm n$, we put
$$\mathcal W^\psi_{\mathrm k,\bm n}=\ind^{\gll_{kn}}_{U}\Psi_{(k,n)}|_{U}\simeq \mathcal W^\psi_{\mathrm{Sh},k,n_1}*\dots*\mathcal W^\psi_{\mathrm{Sh},k,n_l}$$
and denote the image of $\pi_n$ on $\mathcal W^\psi_{k,\bm n}$ by $\mathcal W^\psi_{\bm n}(\pi_n)$.
Then, using Proposition \ref{P:zs}, we obtain the relations between $\mathcal W^\psi_{\bm n}(\pi_n)$ and $\mathcal W^\psi_{\mathrm{Ze}}(\pi_n)$, $\mathcal W^\psi_{\mathrm{Sh}}(\pi_n)$ as follows:
\begin{cor}\label{C:im}
Let $\mathfrak n_{\bm n}=U_{\bm n} -1_n, \bar{\mathfrak n}_{\bm n}=\!^t{\mathfrak n}_{\bm n}$ and
$$N_{\bm n,1}=\diag (N_{n_1},\dots,N_{n_l}), \ N'_{\bm n,1}=\diag (N'_{n_1},\dots,N'_{n_l}),$$
 $$
 N_{\bm n,2}=w_{(k,n)}^{-1}\begin{pmatrix}
1_n& \bar{\mathfrak n}_{\bm n}&\bar{\mathfrak n}_{\bm n}&\dots&\bar{\mathfrak n}_{\bm n}\\
&1_n&\bar{\mathfrak n}_{\bm n}&\dots&\bar{\mathfrak n}_{\bm n}\\
&&\ddots&\ddots&\vdots\\
&&&1_n&\bar{\mathfrak n}_{\bm n}\\
&&&&1_n
\end{pmatrix}w_{(k,n)},  \
N'_{\bm n,2}=1_{kn}+
w_{(k,n)}^{-1}\begin{pmatrix}
 {\mathfrak n}_{\bm n}&&&&\\
{\mathfrak n}_{\bm n}&{\mathfrak n}_{\bm n}&&&\\
\vdots&\vdots&\ddots&&\\
{\mathfrak n}_{\bm n}&{\mathfrak n}_{\bm n}&\dots&{\mathfrak n}_{\bm n}&\\
0_n&0_n&\dots&\dots&0_n
\end{pmatrix}w_{(k,n)}.
$$ 
Then, the isomorphisms 
$$\mathcal T_{\bm n,1}: \mathcal W^\psi_{\mathrm{Ze}}(\pi_n)\overset{\sim}{\map}\mathcal W^\psi_{\bm n}(\pi_n),\ \mathcal T_{\bm n,2}: \mathcal W^\psi_{\bm n}(\pi_n)\overset{\sim}{\map}\mathcal W^\psi_{\mathrm{\rm Sh}}(\pi_n)$$ and their inverses
$$\mathcal T_{\bm n,1}^{-1}: \mathcal W^\psi_{\bm n}(\pi_n)\overset{\sim}{\map}\mathcal W^\psi_{\mathrm{Ze}}(\pi_n), \ \mathcal T_{\bm n,2}^{-1}: \mathcal W^\psi_{\mathrm{Sh}}(\pi_n)\overset{\sim}{\map}\mathcal W^\psi_{\bm n}(\pi_n)$$
 are given by
\begin{align*}
\mathcal T_{\bm n,1} W_{\rm Ze}=\int_{N_{\bm n,1}}W_{\rm Ze}(u\cdot)du, \ \mathcal T_{\bm n,2} W=\int_{N_{\bm n,2}}W(u\cdot)du,\\
 \mathcal T_{\bm n,1}^{-1} W=\int_{N'_{\bm n, 1}}W(u'\cdot)du', \ \mathcal T_{\bm n,2}^{-1} W_{\rm Sh}=\int_{N'_{\bm n,2}}W_{\rm Sh}(u'\cdot)du'
\end{align*}
for $W_{\rm Ze} \in\mathcal W^\psi_{\mathrm{Ze}}(\pi_n)$, $W\in \mathcal W^\psi_{\bm n}(\pi_n)$, and $\wsh\in \mathcal W^\psi_{\mathrm{Sh}}(\pi_n)$.
\end{cor}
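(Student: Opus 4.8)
The plan is to deduce Corollary \ref{C:im} from Proposition \ref{P:zs} (the case $\bm n=(n)$) by a ``chain rule'' that factors the passage from $\mathcal W^\psi_{\mathrm{Ze}}(\pi_n)$ to $\mathcal W^\psi_{\mathrm{Sh}}(\pi_n)$ through the intermediate model. The geometric input is a pair of set-theoretic factorizations of unipotent groups,
\[
N_n=N_{\bm n,1}\cdot N_{\bm n,2}=N_{\bm n,2}\cdot N_{\bm n,1},\qquad N'_n=N'_{\bm n,1}\cdot N'_{\bm n,2}=N'_{\bm n,2}\cdot N'_{\bm n,1},
\]
the two orders in each line coinciding because $N_{\bm n,1}$ normalizes $N_{\bm n,2}$ (resp. $N'_{\bm n,1}$ normalizes $N'_{\bm n,2}$), together with the compatibility of Haar measures under multiplication. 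Granting these, Fubini applied to the pointwise compactly supported integrand of Proposition \ref{P:zs} gives, for $W_{\mathrm{Ze}}\in\mathcal W^\psi_{\mathrm{Ze}}(\pi_n)$,
\[
\mathcal T_nW_{\mathrm{Ze}}(g)=\int_{N_n}W_{\mathrm{Ze}}(ug)\,du=\int_{N_{\bm n,2}}\int_{N_{\bm n,1}}W_{\mathrm{Ze}}(u_1u_2g)\,du_1\,du_2=\mathcal T_{\bm n,2}\bigl(\mathcal T_{\bm n,1}W_{\mathrm{Ze}}\bigr)(g),
\]
where the last equality also shows that the outer integral defining $\mathcal T_{\bm n,2}$ converges on the image of $\mathcal T_{\bm n,1}$. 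Symmetrically one gets $\mathcal T_n^{-1}=\mathcal T_{\bm n,1}^{-1}\circ\mathcal T_{\bm n,2}^{-1}$, and the same bookkeeping (now carried out block by block, using $N_{\bm n,1}=\diag(N_{n_1},\dots,N_{n_l})$ and $N'_{\bm n,1}=\diag(N'_{n_1},\dots,N'_{n_l})$) reduces the round-trip identities $\mathcal T_{\bm n,i}^{-1}\circ\mathcal T_{\bm n,i}=\mathrm{id}=\mathcal T_{\bm n,i}\circ\mathcal T_{\bm n,i}^{-1}$ to the corresponding identities in Proposition \ref{P:zs}.

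The factorizations are a matrix computation. Writing $n\times n$ matrices in $l\times l$ block form according to $\bm n=(n_1,\dots,n_l)$, the strictly lower triangular matrices split as $\bar{\mathfrak n}_n=\bar{\mathfrak n}_{\bm n}\oplus\bigoplus_{i=1}^l\bar{\mathfrak n}_{n_i}$ (the off-block-diagonal entries versus the strictly lower triangular part of each diagonal block), and likewise $\mathfrak n_n=\mathfrak n_{\bm n}\oplus\bigoplus_i\mathfrak n_{n_i}$. Substituting into the matrices defining $N_n$ and $N'_n$ and conjugating by $w_{(k,n)}$, the $\bar{\mathfrak n}_{\bm n}$-part (resp. $\mathfrak n_{\bm n}$-part) becomes exactly $N_{\bm n,2}$ (resp. $N'_{\bm n,2}$), while the block-diagonal part becomes $\diag(N_{n_1},\dots,N_{n_l})=N_{\bm n,1}$ (resp. $N'_{\bm n,1}$), once one checks that $w_{(k,n)}$ conjugated into the $i$-th $\gll_{kn_i}$-block agrees with $w_{(k,n_i)}$; this last point is contained in the decomposition $P=\diag(P_{(n_1^k)}^{w_{(k,n_1)}},\dots,P_{(n_l^k)}^{w_{(k,n_l)}})U_{\bm n}$ recorded in \S\ref{SS:mod}. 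A dimension count is a convenient sanity check: $\dim N_{\bm n,1}+\dim N_{\bm n,2}=\tfrac{k(k-1)}{2}\bigl(\sum_i\binom{n_i}{2}+\sum_{i<j}n_in_j\bigr)=\tfrac{k(k-1)}{2}\binom{n}{2}=\dim N_n$, and identically for the primed groups; since everything is unipotent and multiplication is a polynomial map of affine spaces, equality of dimensions together with the containments $N_{\bm n,1},N_{\bm n,2}\subseteq N_n$ and $N'_{\bm n,1},N'_{\bm n,2}\subseteq N'_n$ forces the factorizations, with product Haar measure.

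It remains to identify the target of each integral and to get nonvanishing. Each of the four maps is manifestly a $\gll_{kn}$-intertwiner for the right regular action, so only left-equivariance is at issue. For $\mathcal T_{\bm n,1}W_{\mathrm{Ze}}$, invariance under $U_{\bm n}\subseteq U$ is immediate since $N_{\bm n,1}$ normalizes $U_{\bm n}$ and $\Psi_{(k,n)}|_{U_{\bm n}}$ is trivial, while equivariance under $\diag(U_{(n_1^k)}^{w_{(k,n_1)}},\dots,U_{(n_l^k)}^{w_{(k,n_l)}})$ (which together with $U_{\bm n}$ generates $U$) reduces, block by block on each $\gll_{kn_i}$, to the computation of \cite[Lemma 3.8]{lapid_mao_2020}. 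Hence $\mathcal T_{\bm n,1}$ maps $\mathcal W^\psi_{\mathrm{Ze}}(\pi_n)\simeq\pi_n$ into $\mathcal W^\psi_{k,\bm n}$; being an intertwiner out of an irreducible representation, it is injective once nonzero, and it is nonzero because $\mathcal T_{\bm n,2}\circ\mathcal T_{\bm n,1}=\mathcal T_n\neq 0$. Since $\dim_\cc(\pi_n,\mathcal W^\psi_{k,\bm n})=1$, its image is precisely $\mathcal W^\psi_{\bm n}(\pi_n)$, so $\mathcal T_{\bm n,1}$ is the asserted isomorphism; then $\mathcal T_{\bm n,2}=\mathcal T_n\circ\mathcal T_{\bm n,1}^{-1}$ is an isomorphism too, and the round-trip identities of the first paragraph identify $\mathcal T_{\bm n,1}^{-1}$ and $\mathcal T_{\bm n,2}^{-1}$ with the honest inverses. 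The remaining cases are handled identically.

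The step I expect to be the main obstacle is the combinatorics behind the factorizations: because $w_{(k,n)}$ interleaves the $k$ copies of $\gll_n$, verifying the inclusions $N_{\bm n,1},N_{\bm n,2}\subseteq N_n$ (and $N'_{\bm n,1},N'_{\bm n,2}\subseteq N'_n$), the compatibility of $w_{(k,n)}$ with the block-diagonal $w_{(k,n_i)}$'s, and the claim that one factor normalizes the other all require careful index tracking, not just the dimension count. Once those matrix identities are pinned down, everything downstream is Fubini, the character computation of \cite{lapid_mao_2020}, and Schur's lemma.
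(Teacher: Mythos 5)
Your proposal is correct and is essentially the paper's own (largely unwritten) derivation: Corollary \ref{C:im} is deduced from Proposition \ref{P:zs} by factoring $N_n=N_{\bm n,1}\cdot N_{\bm n,2}$ and $N'_n=N'_{\bm n,2}\cdot N'_{\bm n,1}$ (one factor normalizing the other, with product Haar measure), applying Fubini to the pointwise compactly supported integrands, checking left $(U,\Psi_{(k,n)})$-equivariance blockwise, and invoking $\dim_\cc\Ho_{\gll_{kn}}(\pi_n,\ind^{\gll_{kn}}_U\Psi_{(k,n)}|_U)=1$ to identify the images. The only point where you are terser than the argument requires is the exact normalization of the inverse formulas (Schur's lemma alone pins them down only up to reciprocal scalars), but your block-by-block reduction of the round-trip identities to Proposition \ref{P:zs} is the intended way to fix the constants, so no change of route is needed.
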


\begin{rem}\label{R:div}
By  proposition \ref{P:sp}, we have 
$$W_{\rm Ze}|_{M_{(k(n-1),k)}}\in \mathcal W_{\rm Ze}^\psi(\pi_{n-1})|\cdot|^{\frac{1}{2}(k-1)}\otimes\mathcal W_{\rm Ze}^\psi(\pi)|\cdot|^{-\frac{1}{2}(n-1)(k-1)}$$
and
$$W_{\rm Ze}|_{M_{(k^n)}}\in \mathcal W_{\rm Ze}^\psi(\pi)|\cdot|^{\frac{1}{2}(n-1)(k-1)}\otimes\mathcal W_{\rm Ze}^\psi(\pi)|\cdot|^{\frac{1}{2}(n-3)(k-1)}\otimes\dots\otimes\mathcal W_{\rm Ze}^\psi(\pi)|\cdot|^{-\frac{1}{2}(n-1)(k-1)}$$
for any $W_{\rm Ze}\in \mathcal W^\psi_{\mathrm{Ze}}(\pi_n)$ (see \cite[\S 3.1]{lapid_mao_2020}).
Similarly, by Remark \ref{R:sp}, we have
$$W|_{M_{(kn_1,kn_2)}}\in \mathcal \wsh^\psi(\pi_{n_1})|\cdot|^{\frac{1}{2}n_2(k-1)}\otimes\mathcal \wsh^\psi(\pi_{n_2})|\cdot|^{-\frac{1}{2}n_1(k-1)}$$
for any $W\in \mathcal W^\psi_{(n_1,n_2)}(\pi_n)$ if $\pi$ is approximately tempered, where $n_1+n_2=n$.
However, we do not know whether this holds in general.
\end{rem}

\section{The local zeta integral}\label{S:zeta}
For the rest of this paper, fix $\pi\in\mathrm{Irr_{\rm gen}}\gll_2$, $n\in\zz_{>0}$ and a nontrivial character $\psi$ of $F$.
Moreover, for each $m\in\zz_{>0}$, we write $w_{(2,m)}=w_m, \ \mathcal W^\psi_{\mathrm{Ze},2,m}=\mathcal W^\psi_{\mathrm{Ze},m},\  \mathcal W^\psi_{\mathrm{Sh},2,m}=\mathcal W^\psi_{\mathrm{Sh},m}, $ and $\mathcal W^\psi_{ 2,\bm m}=\mathcal W^\psi_{ \bm m}$ for short.
 
\subsection{The main results}\label{SS:main}
%We define a new zeta integral which is a modification of Lapid and Mao's zeta integral in \cite{lapid_mao_2020} and the doubling zeta integral as follows.
For any $\wsh \in \mathcal W^\psi_{\mathrm{Sh}}(\pi_n)$, define a function $\Phi_{\wsh}$ on $\gll_n$ by
$$\Phi_{\wsh}(g)= \wsh(\left(\begin{smallmatrix}g&0\\0&1_n\end{smallmatrix}\right)^{w_n}).$$
We note that 
$$\Phi_{\pi_n(\diag (g_1,g_2)^{w_n})\wsh}=\Phi_{\wsh}(g_2^{-1}\cdot g_1)\chi_2(\det g_2), \ 
\Phi_{\pi_n\left(\left(\begin{smallmatrix}
1_n&X\\
&1_n
\end{smallmatrix}
\right)^{w_n}\right)\wsh}=\psi({\rm tr}(X\cdot))\Phi_{\wsh}$$
for $g_1,g_2\in\gll_n,$ and $X\in \mathrm M_n(F)$ (see \S\ref{SS:mod}).
In particular, $\Phi_{\wsh}$ is bi-$K$-invariant for some open compact subgroup $K$ of $\gll_n$.
Moreover, the set $\{\Phi_{\wsh} \ | \  \wsh\in \mathcal W^\psi_{\mathrm{Sh}}(\pi_n) \}$ does not depend on $\psi$ since for any additive character $\psi'=\psi(a\cdot)$ of $F$, where $a\in F^\times$,
the isomorphism $\mathcal W^{\psi'}_{\mathrm{Sh}}(\pi_n)\overset{\sim}{\map}\mathcal W^{\psi}_{\mathrm{Sh}}(\pi_n)$ is given by 
$$\wsh'\mapsto \wsh'(\diag(1_n,a1_n)^{w_n}\cdot)$$
for $\wsh'\in\mathcal W^{\psi'}_{\mathrm{Sh}}(\pi_n)$.
%; denote it by $\mathcal K(\pi_n)$ (Kirillov-Shalika model).

For $\wsh \in \mathcal W^\psi_{\mathrm{Sh}}(\pi_n)$, define $\widetilde \wsh\in\mathcal W^{\psi^{-1}}_{\mathrm{Sh}}(\pi_n^\vee)$ by
$$\widetilde \wsh(g^{w_n})=\chi_\pi^{-1}(\det g)\wsh(\left(\left(\begin{smallmatrix}1_n&0\\0&-1_n\end{smallmatrix}\right)g\left(\begin{smallmatrix}0&1_n\\-1_n&0\end{smallmatrix}\right)\right)^{w_n})$$
(note that $\pi_n\chi_\pi^{-1}\simeq{\rm Sp}(\pi\chi_\pi^{-1},n)\simeq{\rm Sp}(\pi^\vee,n)\simeq{\rm Sp}(\pi,n)^\vee=\pi_n^\vee$).
We note that 
$$\Phi_{\widetilde\wsh}(g)= \wsh(\left(\left(\begin{smallmatrix}g&0\\0&1_n\end{smallmatrix}\right)\left(\begin{smallmatrix}0&1_n\\1_n&0\end{smallmatrix}\right)\right)^{w_n}).$$

Let $\tau\in {\rm Alg'}\gll_n$. 
For any matrix coefficient $f$ of $\tau$, denote a matrix coefficient  $f(\cdot^{-1})$ of $\tau^\vee$ by $f^\vee$.

Finally, for $\wsh \in \mathcal W^\psi_{\mathrm{Sh}}(\pi_n)$ and a matrix coefficient $f$ of any $\tau\in {\rm Alg'}\gll_n$, define the zeta integral $Z(\wsh,s,f)$ with complex variable $s$
%which is a modification of Lapid and Mao's zeta integral in \cite{lapid_mao_2020} and the doubling zeta integral
by
$$Z(\wsh,s,f)=\int_{\gll_n}\Phi_{\wsh} (g)f(g)|\det g|^{s-\frac{1}{2}}dg.$$

The main result of this paper is as follows:
\begin{theo}\label{T:main1}
Let $\tau\in {\rm Alg'}\gll_n$.
Assume $\pi$ is approximately tempered except (i).
\begin{enumerate}
\renewcommand{\labelenumi}{(\roman{enumi})}
\item If $\mathrm{Re}(s)$ is sufficiently large, then the integral defining $Z(\wsh,s,f)$ converges absolutely for any \mbox{$\wsh \in \mathcal W^\psi_{\mathrm{Sh}}(\pi_n)$} and matrix coefficient $f$ of $\tau$.
\item $Z(\wsh,s,f)$ admits a meromorphic continuation to all of $\cc$ and there is a (unique) polynomial $P(X)\in\cc[X]$ such that $P(0)=1$ and 
$$I(\pi,\tau):=\langle Z(\wsh,s,f)\ | \ \wsh\in \mathcal W^{\psi}_{\mathrm{Sh}}(\pi_n), \ f: \text{a matrix coefficient of } \tau \rangle_\cc=P(q^{-s})^{-1}\cc[q^{-s},q^{s}];$$
denote $P(q^{-s})^{-1}$  by $L(\pi;s,\tau)$.
\item If $\tau=\tau_1\times\tau_2$ for  $\tau_1,\tau_2\in\mathrm{Alg'}$, then
$$L(\pi;s,\tau)=L(\pi;s,\tau_1)L(\pi;s,\tau_2).$$
\item  
There is a function $\gamma(s)\in\cc(q^{-s})$ such that 
$$Z(\widetilde\wsh,1-s,f^\vee)=\gamma(s)Z(\wsh,s,f)$$
for any $\wsh$ and $f$; denote $\gamma(s)$  by $\gamma(\pi;s,\tau,\psi)$.
\item If $\tau$ is a subrepresentation of $\tau_1\times\tau_2$ for $\tau_1,\tau_2\in\mathrm{Alg'}$, then
$$\gamma(\pi;s,\tau,\psi)=\gamma(\pi;s,\tau_1,\psi)\gamma(\pi;s,\tau_2,\psi).$$
\item 
If $\tau\in \mathrm{Irr_{gen}}\gll_n$, then
$$L(\pi;s,\tau)=L(s,\pi\boxtimes\tau),$$
where the right-hand side is the local L-factor defined by Jacquet, Piatetski-Shapiro and Shalika \cite{JPSS}.
\end{enumerate}
\end{theo}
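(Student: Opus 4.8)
The plan is to prove Theorem \ref{T:main1} as a package, establishing parts (i)--(vi) in an order that lets each later part use the earlier ones, and then reducing the hardest input (the precise determination of the $L$-factor in (vi)) to the generic Rankin--Selberg theory of \cite{JPSS} via a ``stripping'' argument that peels the Speh representation down to a single copy of $\pi$. I would begin with (i): using Remark \ref{R:div} to control the restriction of $\wsh\in\mathcal W^\psi_{\mathrm{Sh}}(\pi_n)$ — hence of $\Phi_{\wsh}$ — along the diagonal torus, one sees that $\Phi_{\wsh}(g)$ is, up to a bi-$K$-invariant function of rapid decay transverse to the torus, a sum of terms each bounded by $|\det g|^{c}$ times a gauge; combined with the standard bound on a matrix coefficient $f$ of $\tau\in\mathrm{Alg}'$ (which, being a subquotient of a principal series, has matrix coefficients bounded by a fixed power of $\|g\|$), the integral over $\gll_n$ converges for $\mathrm{Re}(s)\gg 0$ exactly as in the Godement--Jacquet / Rankin--Selberg paradigm. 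Here the hypothesis ``$\pi$ approximately tempered'' is \emph{not} needed, matching the statement.

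Next I would prove (iii) directly from the definition: if $\tau\subset\tau_1\times\tau_2$ (the multiplicativity in (iii) is stated for $\tau=\tau_1\times\tau_2$, and for (v) for subrepresentations), a matrix coefficient of $\tau$ is obtained by restricting one of $\tau_1\times\tau_2$, and the standard Bruhat-type computation of matrix coefficients of an induced representation, inserted into $Z(\wsh,s,f)$ and combined with an Iwasawa decomposition $\gll_n=P_{(n_1,n_2)}\cdot K$ together with the factorization $\Phi_{\wsh}|_{M_{(n_1,n_2)}}$ coming from $\mathcal W^\psi_{(n_1,n_2)}(\pi_n)\simeq \mathcal W^\psi_{\mathrm{Sh},n_1}*\mathcal W^\psi_{\mathrm{Sh},n_2}$ in Corollary \ref{C:im} and Remark \ref{R:div}, breaks $Z(\wsh,s,f)$ into a product of two zeta integrals of the same shape for $\pi_{n_1}$ against $\tau_1$ and $\pi_{n_2}$ against $\tau_2$ — whence the $L$-factor, being the generator of the fractional ideal, is the product. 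Part (ii) (rationality and the fractional-ideal description) then follows from a Bernstein-type continuation principle: the family $s\mapsto Z(\wsh,s,f)$ is a finite-dimensional system of $\cc[q^{s},q^{-s}]$-linear functionals, the span $I(\pi,\tau)$ is a nonzero $\cc[q^{s},q^{-s}]$-submodule of $\cc(q^{-s})$ (nonzero because for $\mathrm{Re}(s)\gg0$ one can choose $\wsh,f$ supported near the identity to make $Z\neq 0$), and every such submodule is of the form $P(q^{-s})^{-1}\cc[q^{s},q^{-s}]$ with $P(0)=1$ after normalizing; the finiteness input is exactly the ``intermediate model'' description which exhibits $\mathcal W^\psi_{\mathrm{Sh}}(\pi_n)$ as induced, so its Jacquet modules along $P_{(n,n)}$ are finitely generated.

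For the functional equation (iv), the plan is the usual one: the map $\wsh\mapsto\widetilde\wsh$ intertwines $\mathcal W^\psi_{\mathrm{Sh}}(\pi_n)$ with $\mathcal W^{\psi^{-1}}_{\mathrm{Sh}}(\pi_n^\vee)$, and the bilinear form $(\wsh,f)\mapsto Z(\wsh,s,f)$, once divided by $L(\pi;s,\tau)$, is a holomorphic, nonzero element of a $\Ho$-space of the form $\Ho_{\gll_n\times\gll_n}(\pi_n\otimes(\tau^\vee\boxtimes\tau\chi_\pi^{-1}),\cc\,|\det|^{s-1/2})$; the same is true for $Z(\widetilde\wsh,1-s,f^\vee)/L(\pi^\vee;1-s,\tau^\vee)$. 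Both, after clearing denominators, define elements of one finite-dimensional $\cc(q^{-s})$-vector space of such functionals, and one invokes the one-dimensionality of that $\Ho$-space for $\tau$ supercuspidal — which is precisely the inequality $\dim_\cc\Ho_{\gll_n\times\gll_n}(\pi_n,\tau\boxtimes\tau^\vee\chi_\pi)\le 1$ flagged in the introduction as the heart of \S\ref{S:fe} — to get a single scalar $\gamma(s)\in\cc(q^{-s})$ relating them for supercuspidal $\tau$, then bootstrap to general $\tau\in\mathrm{Alg}'$ by multiplicativity (v), which itself follows from (iii) and the functional equation applied on each factor. The proof of (v) thus runs parallel to (iii).

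Finally, for (vi), I would argue that when $\tau\in\mathrm{Irr_{gen}}\gll_n$ the integral $Z(\wsh,s,f)$ \emph{is} (up to harmless normalization) a classical Jacquet--Piatetski-Shapiro--Shalika integral for the pair $\pi\boxtimes\tau$ on $\gll_2\times\gll_n$. The mechanism: replace the matrix coefficient $f$ of $\tau$ by its expression via the Whittaker model of $\tau$ (possible since $\tau$ is generic), so that $Z$ becomes an integral over $\gll_n$ of $\Phi_{\wsh}$ against two Whittaker functions of $\tau$ and $\tau^\vee$; then, using the Shalika-model transitivity $\mathcal W^\psi_{\mathrm{Sh},n}\hookrightarrow \mathcal W^\psi_{(1^n)}=$ Zelevinsky/Whittaker-type model via $\mathcal T_n$ of Proposition \ref{P:zs}, unfold $\Phi_{\wsh}$ into an iterated integral over the unipotent radical of the mirabolic, collapsing one $\gll_n$-integration and one $\tau$-Whittaker integration against each other (a Fourier/Mellin orthogonality on $U_{(1^n)}\backslash\gll_n$), leaving exactly the $\gll_2\times\gll_n$ RS integral $\Psi(s;W_\pi,W_\tau)$ whose gcd is $L(s,\pi\boxtimes\tau)$ by \cite{JPSS}. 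Since the two fractional ideals $I(\pi,\tau)$ and the JPSS ideal then coincide, their normalized generators agree: $L(\pi;s,\tau)=L(s,\pi\boxtimes\tau)$. I expect the main obstacle to be exactly this last unfolding: making the manipulation that turns the Shalika-model integral on $\gll_{2n}$ into the JPSS integral on $\gll_2\times\gll_n$ rigorous — keeping track of the explicit Weyl elements $w_n$, the characters $\Psi_{(2,n)}$, the twisting by $\chi_\pi$ and by $|\det|^{\pm(n-1)/2}$ from Remark \ref{R:div}, and the convergence needed to justify interchanging the iterated integrals — rather than in the soft functional-analytic parts (i)--(v).
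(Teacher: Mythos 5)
Your outline for (i)--(v) tracks the paper's route (convergence via support/$L^2$ estimates, reduction to the generic case and factorization through the intermediate models $\mathcal W^\psi_{(n_1,n_2)}(\pi_n)$, multiplicativity, and a uniqueness-based functional equation), but two essential inputs are missing. The decisive gap is in (vi). You claim that, for generic $\tau$, unfolding $\Phi_{\wsh}$ through $\mathcal T_n^{-1}$ and replacing the matrix coefficient by Whittaker functions collapses $Z(\wsh,s,f)$ \emph{exactly} into a Jacquet--Piatetski-Shapiro--Shalika integral for $\pi\boxtimes\tau$, so that the two fractional ideals coincide. That identification does not come out of the unfolding: what the computation actually yields (this is Proposition \ref{P:gen2}, and Lemma \ref{L:12} for $n=2$) is a \emph{truncated} $\gll_2$ Rankin--Selberg integral (a cutoff on the bottom row of the $\gll_2$-variable) together with boundary terms whose coefficients lie in $\cc[q^{-s},q^s]$. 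This gives only the inclusion $I(\pi,\tau)\subset L(s,\pi\boxtimes\tau)\cc[q^{-s},q^s]$, i.e.\ the divisibility already essentially known from \cite{cfk}; the reverse inclusion is precisely the hard new point, and the paper does not obtain it by unfolding. Instead it reduces to $\tau=\mathrm{St}(\rho,m)$ by (iii), proves exact equality directly only for $n=1,2$ with supercuspidal $\tau$ (where supercuspidality kills the boundary term), and for general $n$ compares $\gamma(\pi;s,\tau,\psi)$ with the JPSS gamma factor via (iv) and (v); the residual ambiguity $Q(q^{-s},q^s)$ is then a common factor of $L(s,\pi\boxtimes\tau)^{-1}$ and $L(1-s,\pi^\vee\boxtimes\tau^\vee)^{-1}$, and a case-by-case coprimality check -- which uses approximate temperedness of $\pi$ in an essential way -- forces $Q$ to be a unit. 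That your proposed proof of (vi) never invokes approximate temperedness is a symptom that the ``exact collapse'' cannot be justified as stated.

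Second, your functional equation (iv) simply invokes $\dim_\cc\Ho_{\gll_n\times\gll_n}(\pi_n,\tau\boxtimes\tau^\vee\chi_\pi)\leq 1$ for supercuspidal $\tau$. In the paper this is Proposition \ref{P:uniq}, whose proof (the Kirillov--Shalika model and Proposition \ref{P:sh}, the non-supercuspidal and $n\leq 2$ special cases, and the filtration of $\pi_n$ along the subgroups $D^{2n}_i$ built from Zelevinsky's restriction theorem, Propositions \ref{P:zell} and \ref{P:fl}) occupies all of \S\ref{S:fe} and is the technical heart of the functional equation; leaving it as a black box leaves (iv), and with it (v) and your route to (vi), unproved. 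A smaller point of the same kind: in (iii) your Iwasawa/Bruhat decomposition only gives $Z(\wsh,s,f)$ as a \emph{sum} of products, hence $I(\pi,\tau)\subset I(\pi,\tau_1)I(\pi,\tau_2)$; to conclude equality of the $L$-factors you also need the converse construction realizing an arbitrary product $Z(\wsh^1,s,f^1)Z(\wsh^2,s,f^2)$ as a single zeta integral for $\tau_1\times\tau_2$ (Proposition \ref{P:red2}).
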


\begin{rem}\label{R:sad}
As mentioned in Remark \ref{R:sadsad}, some of our results are known.
Specifically, they are as follows:
\begin{itemize}%%
\item (i)  follows from the discussion at the beginning of Appendix C of  \cite{cfk}.
\item (ii) is a specialized version of  \cite[Theorem C.6]{cfk}.
\item (iv) is a conclusion of  \cite[Theorem C.1]{cfk}.
\item (v) is a specialized version of  \cite[Theorem C.2]{cfk}.
\end{itemize}
On the other hand:
\begin{itemize}%%
\item (iii) is a partial refinement of \cite[Theorem C.6 (2)]{cfk}, which says that $$L(\pi;s,\tau)\in L(\pi;s,\tau_1)L(\pi;s,\tau_2)\cc[q^{-s},q^s].$$

\item (vi)  is a partial refinement of \cite[Proposition C.10]{cfk}, which says that $$ L(s,\pi\boxtimes\tau)\in L(\pi;s,\tau)\cc[q^{-s},q^s].$$
\end{itemize}
The proofs of (iii) and (vi) (essentially, Proposition \ref{P:gen2}, Proposition \ref{P:red1}, and Proposition \ref{P:red2}) depend on the facts in \S \ref{SS:mod}.
Only if $k=2$, then $N'_{\bm n,2}$ (in \S \ref{SS:mod}) coincides with the unipotent radical of some upper triangular parabolic subgroup of $\gll_n\subset\gll_{2n}$, so that, by partial integration, we can obtain other representations of our zeta integral using intermediate models.
Our method does not seem to be (immediately) applicable to the general case in \cite{cfk} if $k>2$.
\end{rem}

\begin{rem}\label{R:main1}
%\hypertarget{\csname @currentHref\endcsname}{}
\begin{enumerate}%%
\renewcommand{\labelenumi}{(\roman{enumi})}
\item 
By a simple computation, one can see that the $\psi$-dependence of $\gamma(\pi;s,\tau,\psi)$ is given by
$$\gamma(\pi;s,\tau,\psi(a\cdot))=\chi_\pi^n(a)\chi_\tau^2(a)|a|^{n(2s-1)}\gamma(\pi;s,\tau,\psi)$$
for any $a\in F^\times$.
\item We put 
$$\epsilon(\pi;s,\tau,\psi):=\gamma(\pi;s,\tau,\psi)L(\pi;s,\tau)/L(\pi^\vee;1-s,\tau^\vee).$$
Then, we have $\epsilon(\pi;s,\tau,\psi), \epsilon(\pi^\vee;1-s,\tau^\vee,\psi)\in \cc[q^{-s},q^{s}]$ by Theorem \ref{T:main1}(ii), (iv) and 
$$\epsilon(\pi;s,\tau,\psi)\epsilon(\pi^\vee;1-s,\tau^\vee,\psi)=1$$
by the above (i) and using Theorem \ref{T:main1}(iv) twice.
In particular, we can write $\epsilon(\pi;s,\tau,\psi)=cq^{ls}$ for some $c\in\cc$ and $l\in\zz$.
%Especially, if $\tau\in{\rm Irr_{gen}}\gll_n$, then $\gamma(\pi;s,\tau,\psi)$ coincides with the $\gamma$-factor of the local Rankin-Selberg zeta integrals for $\pi\boxtimes\tau$ defined in \cite{JPSS} up to units in $\cc[q^{-s},q^s]$
%by  Theorem \ref{T:main1}(vi).
\end{enumerate}
\end{rem}
We prove Theorem \ref{T:main1} in several parts:
The proof of (ii) is given in \S \ref{SS:red} by reduction to the generic case.
(ii) for generic representations is proved in \S \ref{SS:az} (Corollary \ref{C:genthm}).
The proofs of (iii) and (v) are given in \S \ref{SS:red} as byproducts of the proof of (ii).
The proof of (iv) is a bit technical, so we postpone it to the next section.
(vi) is proved using the entire \S \ref{SS:pf6}.

Finally, (i) is proved here.
To prove this, we need the following two lemmas.
\begin{lemm}\label{L:ks}
%\hypertarget{\csname @currentHref\endcsname}{}
\begin{enumerate}%%
\renewcommand{\labelenumi}{(\roman{enumi})}
\item For any $\wsh\in\mathcal W^{\psi}_{\mathrm{Sh}}(\pi_n)$, there is an open compact subset $C$ of $\mathrm M_n(F)$ such that \mbox{$\mathrm{supp} \Phi_{\wsh}\subset C$}.
%Moreover, $\mathcal K(\pi_n)$ contains the set of Schwartz function $\mathcal S(\gll_n)$ on $\gll_n$.
\item If $r\in\rr$ is sufficiently large, then $\Phi_{\wsh}|\cdot|^r$ is $L^2$-function for any $\wsh\in\mathcal W^{\psi}_{\mathrm{Sh}}(\pi_n)$.
\end{enumerate}
\end{lemm}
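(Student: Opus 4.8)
The plan for (i) is short. Since $\wsh$ is a smooth vector, there is a lattice $\mathfrak l\subset\mathrm M_n(F)$ with $\pi_n(\left(\begin{smallmatrix}1_n&X\\0&1_n\end{smallmatrix}\right)^{w_n})\wsh=\wsh$ for all $X\in\mathfrak l$. Feeding this into the equivariance $\Phi_{\pi_n(\left(\begin{smallmatrix}1_n&X\\0&1_n\end{smallmatrix}\right)^{w_n})\wsh}=\psi(\mathrm{tr}(X\,\cdot\,))\Phi_{\wsh}$ recorded in \S\ref{SS:main} gives $\psi(\mathrm{tr}(Xg))\Phi_{\wsh}(g)=\Phi_{\wsh}(g)$ for all $X\in\mathfrak l$ and $g\in\gll_n$. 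Hence $\Phi_{\wsh}(g)\neq0$ forces $\mathrm{tr}(Xg)$ to lie in the conductor of $\psi$ for every $X\in\mathfrak l$; since $(X,Y)\mapsto\mathrm{tr}(XY)$ is a nondegenerate pairing on $\mathrm M_n(F)$, the set $C$ of all such $g$ is a lattice in $\mathrm M_n(F)$, and it does the job.

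For (ii) I would first reduce to a pointwise bound. Fix $\wsh$; by (i) its support lies in a lattice $C\subset\mathrm M_n(F)$, and since $\wsh$ is fixed by an open compact subgroup of $\gll_{2n}$, the set $\{\pi_n(\kappa)\wsh:\kappa\in\gll_{2n}(\mathcal O)\}$ is finite, say $\{\wsh^{(1)},\dots,\wsh^{(R)}\}$. Using the Cartan decomposition $g=k_1\,a\,k_2$ (with $k_1,k_2\in\gll_n(\mathcal O)$ and $a$ a dominant diagonal matrix) together with $\Phi_{\pi_n(\diag(g_1,g_2)^{w_n})\wsh}(g)=\Phi_{\wsh}(g_2^{-1}gg_1)\chi_\pi(\det g_2)$, one checks that $\Phi_{\wsh}(k_1 a k_2)=\chi_\pi(\det k_1)\,\Phi_{\wsh_{k_1,k_2}}(a)$ with $\wsh_{k_1,k_2}=\pi_n(\diag(k_2,k_1^{-1})^{w_n})\wsh\in\{\wsh^{(1)},\dots,\wsh^{(R)}\}$; as $|\chi_\pi(\det k_1)|=1$, this gives $|\Phi_{\wsh}(g)|\le\max_j|\Phi_{\wsh^{(j)}}(a)|$. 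So it suffices to prove a gauge estimate $|\Phi_{\wsh}(g)|\le c\,|\det g|^{-M}$ on $\gll_n$, with $c,M>0$ depending only on $\wsh$ (and $\pi$), uniformly over a finite family of $\wsh$'s.

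The gauge estimate is where the real content lies, and it is the part I expect to be the main obstacle; everything else is formal. Writing $\Phi_{\wsh}(g)=\ell(\pi_n(\left(\begin{smallmatrix}g&0\\0&1_n\end{smallmatrix}\right)^{w_n})v)$ for the Shalika functional $\ell$ realizing the model and $\wsh=\wsh_v$, and using that $\pi_n$ is admissible of finite length, the standard theory of asymptotic expansions of matrix coefficients along a split torus applied to $\{\diag(a,1_n)^{w_n}\}$ yields, as $a$ tends to infinity inside the dominant cone, a finite expansion into terms $\chi(\diag(a,1_n)^{w_n})\,p_\chi(a)$ with $\chi$ a character and $p_\chi$ a polynomial in the valuations of the diagonal entries of $a$; combining this with the one-sided bound on those valuations coming from the support restriction in (i), one extracts $|\chi(\diag(a,1_n)^{w_n})p_\chi(a)|\le c\,|\det a|^{-M}$, hence $|\Phi_{\wsh}(g)|\le c\,|\det g|^{-M}$ on the support. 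Alternatively one may simply invoke the corresponding asymptotic estimate for Zelevinsky/Shalika functions from \cite{lapid_mao_2020} (or the preliminary discussion in Appendix~C of \cite{cfk}). Granting this, with Haar measure $dg=|\det g|^{-n}dX$ on $\gll_n$,
$$\int_{\gll_n}|\Phi_{\wsh}(g)|^2|\det g|^{2r}\,dg=\int_{\gll_n}|\Phi_{\wsh}(g)|^2|\det g|^{2r-n}\,dX\le c^2\int_{C}|\det X|^{2r-n-2M}\,dX,$$
which is finite as soon as $2r-n-2M>-1$, i.e. for $r$ sufficiently large, since $\int_{C}|\det X|^{s}\,dX$ converges for $\mathrm{Re}(s)>-1$ for any lattice $C$.
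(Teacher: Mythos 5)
For part (i) your argument is correct and complete: smoothness of $\wsh$ under $\{\left(\begin{smallmatrix}1_n&X\\0&1_n\end{smallmatrix}\right)^{w_n}\}$ gives a lattice $\mathfrak l$ of fixing elements, and the equivariance $\Phi_{\pi_n\left(\left(\begin{smallmatrix}1_n&X\\&1_n\end{smallmatrix}\right)^{w_n}\right)\wsh}=\psi(\mathrm{tr}(X\cdot))\Phi_{\wsh}$ then forces $\mathrm{supp}\,\Phi_{\wsh}$ into the dual lattice of $\mathfrak l$, which is open and compact in $\mathrm M_n(F)$. This is in fact more self-contained than the paper, whose entire proof of the lemma is a citation: (i) is quoted from \cite[Lemma 3.2]{lapid_mao_2020} and (ii) from \cite[Proposition 4.2]{lapid_mao_2020}.

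For part (ii) the formal skeleton is sound: the Cartan reduction using $\Phi_{\pi_n(\diag(g_1,g_2)^{w_n})\wsh}=\Phi_{\wsh}(g_2^{-1}\cdot g_1)\chi_\pi(\det g_2)$ and the finiteness of the $\gll_{2n}(\mathcal O)$-translates is correct, as is the final computation with $dg=|\det g|^{-n}dX$ and the convergence of $\int_C|\det X|^{s}dX$ for $\mathrm{Re}(s)>-1$. But the crux is exactly the gauge estimate $|\Phi_{\wsh}(g)|\le c\,|\det g|^{-M}$, and your primary justification does not stand as written: $\Phi_{\wsh}(g)=\ell(\pi_n(\diag(g,1_n)^{w_n})v)$ is not a matrix coefficient of $\pi_n$, because the Shalika functional $\ell$ is not (and need not be) a smooth functional, so Casselman's asymptotic expansion of matrix coefficients along a split torus cannot be invoked off the shelf; producing expansions or bounds for Shalika functions of Speh representations along the directions $\diag(a,1_n)^{w_n}$ is precisely the nontrivial content of \cite{lapid_mao_2020} (their \S3--4). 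Your fallback of quoting those estimates is legitimate, but at that point the argument collapses to the paper's own proof: \cite[Proposition 4.2]{lapid_mao_2020} already yields the square-integrability of $\Phi_{\wsh}|\cdot|^{r}$ directly, making the Cartan reduction and the pointwise gauge an unnecessary detour. Note also that the lemma is used in Theorem \ref{T:main1}(i) for arbitrary generic $\pi$ (no approximate temperedness), so whatever bound you import must hold in that generality --- one more reason to cite the precise result rather than a generic ``asymptotics of matrix coefficients'' principle.
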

\begin{proof}
(i) and (ii) immediately follow from \cite[Lemma 3.2]{lapid_mao_2020} and \cite[Proposition 4.2]{lapid_mao_2020}, respectively.
\end{proof}

\begin{lemm}\label{L:L2}
Let  $G$ be an algebraic group over $F$ and $K$ an open compact subgroup of $G$.
Let $\mathcal F$ be a right $K$-invariant function on $G$.
If $\mathcal F$ is an $L^1$-function, then $\mathcal F$ is an $L^2$-function.
\end{lemm}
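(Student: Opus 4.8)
The plan is to reduce the problem to a countable sum of integrals over cosets and then compare the $L^1$ and $L^2$ norms coset by coset, exploiting that $\mathcal F$ is constant on each right $K$-coset. First I would fix a Haar measure on $G$ and normalize so that $\mathrm{vol}(K)=1$ (after rescaling, which affects neither hypothesis nor conclusion). Since $G$ is a $p$-adic group and $K$ is open compact, $G$ is a countable disjoint union $G=\bigsqcup_{i\in I}g_iK$ of right $K$-cosets, each of measure $1$. On the coset $g_iK$ the function $\mathcal F$ takes the constant value $c_i:=\mathcal F(g_i)$, because $\mathcal F$ is right $K$-invariant. Hence $\int_G|\mathcal F|=\sum_i|c_i|$ and $\int_G|\mathcal F|^2=\sum_i|c_i|^2$.

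The key step is then the elementary fact that $\ell^1(I)\subset\ell^2(I)$: if $\sum_i|c_i|<\infty$ then in particular $|c_i|\to 0$, so $|c_i|\le 1$ for all but finitely many $i$, whence $|c_i|^2\le|c_i|$ for those $i$, and the finitely many remaining terms are finite; therefore $\sum_i|c_i|^2<\infty$. Thus $\mathcal F\in L^1(G)$ forces $\mathcal F\in L^2(G)$.

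There is essentially no obstacle here; the only point requiring a word of care is the measure-theoretic justification that the integral of a right-$K$-invariant function decomposes as the stated sum over coset representatives — this is immediate from the fact that the coset decomposition is countable and each piece carries finite measure on which $|\mathcal F|$ (resp. $|\mathcal F|^2$) is constant, so Tonelli applies to the nonnegative integrand. I would state the argument in exactly this order: normalize the measure, write the coset decomposition, identify $\|\mathcal F\|_1=\sum_i|c_i|$ and $\|\mathcal F\|_2^2=\sum_i|c_i|^2$, and invoke $\ell^1\subseteq\ell^2$.
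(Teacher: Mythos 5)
Your proof is correct, but it follows a different route from the paper. You discretize: normalizing $\mathrm{vol}(K)=1$, you write $G=\bigsqcup_i g_iK$ (countably many cosets, each of Haar measure $1$), observe that right $K$-invariance makes $\mathcal F$ constant equal to $c_i$ on $g_iK$, so $\|\mathcal F\|_{L^1}=\sum_i|c_i|$ and $\|\mathcal F\|_{L^2}^2=\sum_i|c_i|^2$, and conclude by $\ell^1\subseteq\ell^2$. The paper instead avoids any coset decomposition and argues in one line with Tonelli: with $\mathrm{vol}(K)=1$,
$$\int_G|\mathcal F(g)|^2\,dg=\int_{G\times K}|\mathcal F(g)\mathcal F(gk)|\,dg\,dk\le\int_{G\times G}|\mathcal F(g)\mathcal F(gg')|\,dg\,dg'=\Bigl(\int_G|\mathcal F(g)|\,dg\Bigr)^2,$$
using right $K$-invariance in the first equality, positivity of the integrand to enlarge $K$ to $G$, and left invariance of the Haar measure in the last step. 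The two arguments are essentially of the same depth, but they buy slightly different things: the paper's version gives the clean quantitative bound $\|\mathcal F\|_{L^2}^2\le\mathrm{vol}(K)^{-1}\|\mathcal F\|_{L^1}^2$ and needs no countability of $G/K$, whereas your version leans on the countable coset decomposition (available here since $G(F)$ is second countable, but an extra hypothesis in principle) and on splitting off the finitely many cosets where $|c_i|>1$. Your only loose point is terminological and measure-theoretic bookkeeping: the sets $g_iK$ on which a right $K$-invariant function is constant are the left cosets of $K$, and you should note that such a function is locally constant, hence measurable, so the Tonelli-type rearrangement of the sum is legitimate; with that said, the argument is complete.
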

\begin{proof}
We have the lemma as follows:
\begin{align*}
\int_G |\mathcal F(g)|^2dg&=\int_{G\times K} |\mathcal F(g)\mathcal F(gk)|dgdk\leq\int_{G\times G} |\mathcal F(g)\mathcal F(gg')|dgdg'=\left(\int_G |\mathcal F(g)|dg\right)^2<\infty.
\end{align*}
\end{proof}
\begin{proof}[proof of Theorem \ref{T:main1} (i)]
By Lemma  \ref{L:ks} (i), we have
$$\Phi_{\wsh} (g)f(g)|\det g|^{s-\frac{1}{2}}=\Phi_{\wsh} (g)|\det g|^{s_1}\times 1_{C\cap\gll_n}(g)f(g)|\det g|^{s_2}$$
for any $\wsh$, $f$, and $g\in\gll_n$, where $C$ is some open compact subset of $\mathrm M_n(F)$ and $s-1/2=s_1+s_2$.
By Lemma \ref{L:ks} (ii), $\Phi_{\wsh}|\cdot|^{s_1}$ is an $L^2$-function for any $\wsh$ if $\mathrm {Re}(s_1)$ is sufficiently large.
On the other hand, $1_{C\cap\gll_n}f|\cdot |^{s_2}$ is an $L^2$-function for any $f$ by  Lemma \ref{L:L2} and the convergence of the zeta integral of Godement and Jacquet \cite[Theorem 3.3]{Godement_1972} if $\mathrm {Re}(s_2)$ is sufficiently large.
Thus the integral defining $Z(\wsh,s,f)$ converges absolutely for any $\wsh$ and $f$ if $\mathrm{Re}(s)$ is sufficiently large.
\end{proof}

\subsection{The generic case}\label{SS:az}
In this subsection, we assume $\tau\in \mathrm{Irr_{gen}}\gll_n$ (we do not need to assume that $\pi$ is approximately tempered here).
We let $\mathcal W^\psi(\tau')=\mathcal W^\psi_{\mathrm{Ze}}(\tau')$ for any $\tau'\in {\rm Irr_{gen}}$.
% and $\mathcal W^{\psi^{-1}}(\tau^\vee)=\mathcal W^{\psi^{-1}}_{\mathrm{Ze}}(\tau)$.

For any open compact subgroup $K$ of $\gll_n$ and $h\in\gll_n$, define $L_{h,K} \in\mathcal W^\psi(\tau)^\vee$ by
$$\langle W,L_{h,K}\rangle=\int_K W(hk) dk$$
for $W\in \mathcal W^\psi(\tau)$.
We can obtain $\mathcal W^\psi(\tau)^\vee=\langle L_{h,K}\rangle_\cc$ easily.

For any $\wsh \in \mathcal W^\psi_{\mathrm{Sh}}(\pi_n)$ and $W\in\mathcal W^\psi(\tau)$, we consider the following zeta integral
$$Z(\wsh,s,W)=\int_{\gll_n}\Phi_{\wsh} (g)W(g)|\det g|^{s-\frac{1}{2}}dg.$$

\begin{prop}\label{P:gen1}
%\hypertarget{\csname @currentHref\endcsname}{}
\begin{enumerate}%%
\renewcommand{\labelenumi}{(\roman{enumi})}
\item If $\mathrm{Re}(s)$ is sufficiently large, then the integral defining $Z(\wsh,s,W)$ converges absolutely for any $\wsh \in \mathcal W^\psi_{\mathrm{Sh}}(\pi_n)$ and $W\in\mathcal W^\psi(\tau)$.
\item For any matrix coefficient $f$ of $\tau$ and $\wsh\in\mathcal W^{\psi}_{\mathrm{Sh}}(\pi_n)$, there are $W_{\mathrm{Sh}}^i\in \mathcal W^{\psi}_{\mathrm{Sh}}(\pi_{n})$ and $W^{i}\in \mathcal W^\psi(\tau)$ $(i=1,\dots, l)$ such that
$$Z(W_{\mathrm{Sh}},s,f)=\sum_{1\leq i\leq l}Z(W^i_{\mathrm{Sh}},s,W^i)$$
if $\mathrm{Re}(s)$ is sufficiently large.
\item For any  $W\in \mathcal W^\psi(\tau)$ and $\wsh\in\mathcal W^{\psi}_{\mathrm{Sh}}(\pi_n)$, there is a matrix coefficient $f$ of $\tau$ such that
$$Z(W_{\mathrm{Sh}},s,W)=Z(W_{\mathrm{Sh}},s,f)$$
if $\mathrm{Re}(s)$ is sufficiently large.
\end{enumerate}
\end{prop}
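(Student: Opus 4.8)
The plan is to obtain (i) from the Cauchy--Schwarz estimate already used for Theorem~\ref{T:main1}(i), and (ii), (iii) from one identity obtained by expanding a matrix coefficient of $\tau$ through the Whittaker models of $\tau$ and $\tau^\vee$ and applying Fubini. For (i): by Lemma~\ref{L:ks}(i) the integrand of $Z(\wsh,s,W)$ is supported in a fixed compact $C\subset\mathrm M_n(F)$; write $|\det g|^{s-\frac12}=|\det g|^{s_1}|\det g|^{s_2}$ with $s-\tfrac12=s_1+s_2$. By Lemma~\ref{L:ks}(ii), $\Phi_{\wsh}|\cdot|^{s_1}\in L^2(\gll_n)$ for $\mathrm{Re}(s_1)$ large, while $1_{C\cap\gll_n}\,W|\cdot|^{s_2}\in L^1(\gll_n)$ for $\mathrm{Re}(s_2)$ large by the standard asymptotic estimates on Whittaker functions of $\tau$ (equivalently, by absolute convergence of the $\gll_n\times\gll_1$ Godement--Jacquet integral), hence $\in L^2(\gll_n)$ by Lemma~\ref{L:L2} since $W$ is smooth; Cauchy--Schwarz then gives absolute convergence of $Z(\wsh,s,W)$ for $\mathrm{Re}(s)$ large, exactly as for Theorem~\ref{T:main1}(i).

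The bridge: realize $\tau$ in $\mathcal W^\psi(\tau)$ and $\tau^\vee$ in $\mathcal W^{\psi^{-1}}(\tau^\vee)$, write $v\leftrightarrow W_v$ and $\tilde v\leftrightarrow W'_{\tilde v}$, and let $P_n\subset\gll_n$ be the mirabolic subgroup and $V\subset P_n$ the subgroup of upper unitriangular matrices of $\gll_n$. Starting from the classical integral representation of a matrix coefficient of the generic representation $\tau$ through these models (see \cite{JPSS}),
\[
\langle\tau(g)v,\tilde v\rangle=\int_{V\backslash P_n}W_v(pg)\,W'_{\tilde v}(p)\,dp ,
\]
plugging it into $Z(\wsh,s,f)$ for $f(g)=\langle\tau(g)v,\tilde v\rangle$, interchanging the two integrations (justified for $\mathrm{Re}(s)$ large by an estimate of the kind in (i)), changing variables $g\mapsto p^{-1}g$, and using the translation property $\Phi_{\wsh}(p^{-1}g)=\chi_\pi(\det p)^{-1}\Phi_{\pi_n(\diag(1_n,p)^{w_n})\wsh}(g)$ from \S\ref{SS:main}, one gets
\[
Z(\wsh,s,f)=\int_{V\backslash P_n}W'_{\tilde v}(p)\,|\det p|^{\frac12-s}\,\chi_\pi(\det p)^{-1}\,Z\bigl(\pi_n(\diag(1_n,p)^{w_n})\wsh,\,s,\,W_v\bigr)\,dp .
\]
Here the integrand descends to $V\backslash P_n$ (because $W'_{\tilde v}$ and $p\mapsto Z(\pi_n(\diag(1_n,p)^{w_n})\wsh,s,W_v)$ carry opposite $(V,\psi)$-equivariances), and $p\mapsto Z(\pi_n(\diag(1_n,p)^{w_n})\wsh,s,W_v)$ is locally constant, with its scale of local constancy — and its value near $p=1$ — governed by the open compact subgroups fixing $\wsh$ and $v$, which are independent of $s$.

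For (iii): given $W=W_v$, choose $\tilde v\in\tau^\vee$ whose Kirillov function $W'_{\tilde v}|_{P_n}$ lies in $\cind^{P_n}_{V}\psi^{-1}$ (contained in the Kirillov model of $\tau^\vee$), is supported modulo $V$ in a sufficiently small neighbourhood of $1$, and has total mass chosen so that the displayed integral collapses to $Z(\wsh,s,W_v)$; this yields $Z(\wsh,s,W)=Z(\wsh,s,f)$ with $f(g)=\langle\tau(g)v,\tilde v\rangle$, and here the $p$-integral is over a compact set, so no convergence issue remains. For (ii): if $\tau$ is supercuspidal the Kirillov model of $\tau^\vee$ equals $\cind^{P_n}_{V}\psi^{-1}$, so $W'_{\tilde v}$ is compactly supported modulo $V$, the integrand of the bridge identity is a locally constant compactly supported function of $p$, and the integral reduces to a finite sum $\sum_i c_i\,|\det p_i|^{\frac12-s}\,Z(\pi_n(\diag(1_n,p_i)^{w_n})\wsh,s,W_v)$; a further use of the translation property of $\Phi_{\wsh}$ together with right translation in $\mathcal W^\psi(\tau)$ absorbs each monomial $|\det p_i|^{\frac12-s}$ and rewrites this as $\sum_i Z(W^i_{\mathrm{Sh}},s,W^i)$. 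The general case of (ii) is reduced to the supercuspidal one: any $\tau\in\mathrm{Irr_{gen}}\gll_n$ embeds in a product $\tau_1\times\cdots\times\tau_r$ of supercuspidal representations, every matrix coefficient of $\tau$ is one of that product, and matrix coefficients of a product of cuspidal representations are built from those of the $\tau_i$ by an integral over a maximal compact subgroup; unfolding this against $\Phi_{\wsh}$ and reapplying the bridge turns $Z(\wsh,s,f)$ into a finite sum of supercuspidal-type integrals.

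The main obstacle is convergence. Because $\mathrm{supp}\,\Phi_{\wsh}$ is compact only inside $\mathrm M_n(F)$ — it accumulates along $\{\det=0\}$, where $\Phi_{\wsh}$ need not even be bounded — every step has to be performed for $\mathrm{Re}(s)$ large, the decay being controlled by Lemma~\ref{L:ks}(ii) and the asymptotics of Whittaker functions of $\tau$; this is exactly what legitimizes the interchange of integrations in the bridge identity and what forces the hypothesis ``$\mathrm{Re}(s)$ sufficiently large'' in all three parts. A secondary, more bookkeeping-type difficulty is producing a genuinely \emph{finite} sum in (ii) when $\tau$ is not supercuspidal, where the Kirillov model of $\tau^\vee$ properly contains $\cind^{P_n}_{V}\psi^{-1}$; this is what the reduction to supercuspidal data takes care of.
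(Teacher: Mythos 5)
Your part (i) is exactly the paper's argument (Lemma \ref{L:ks} for the support and $L^2$-bound of $\Phi_{\wsh}$, the $L^1$-estimate for the truncated Whittaker function, Lemma \ref{L:L2} and Cauchy--Schwarz), so there is nothing to add there. For (ii) and (iii) you take a genuinely different route --- expanding matrix coefficients through the Whittaker/Kirillov models --- and as written it has a real gap. The bridge identity $\langle\tau(g)v,\tilde v\rangle=\int_{V\backslash P_n}W_v(pg)\,W'_{\tilde v}(p)\,dp$ is not available for an arbitrary $\tau\in\mathrm{Irr_{gen}}\gll_n$, which is the generality in which Proposition \ref{P:gen1} is stated and used (Corollary \ref{C:genthm} needs it without any unitarity hypothesis): the integral need not converge when $\tau$ is not unitarizable (already for $\tau=|\cdot|^{a}\times|\cdot|^{-a}$ on $\gll_2$ with $a\geq\tfrac12$, the product of the two Kirillov functions behaves like $|t|^{1-2a}$ near $t=0$ and the integral diverges), and even where it does converge, identifying the $P_n$-invariant pairing it defines with the canonical $\gll_n$-invariant pairing (up to a constant) requires Bernstein's uniqueness theorem for $P_n$-invariant pairings, not merely a citation of \cite{JPSS}. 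Your fallback for (ii) --- prove it for supercuspidal $\tau$, where the Kirillov model of $\tau^\vee$ is $\cind_V^{P_n}\psi^{-1}$, then reduce the general case by writing $\tau\subset\tau_1\times\cdots\times\tau_r$ and expressing matrix coefficients of the induced representation by an integral over a maximal compact subgroup --- is only a sketch at the crucial point: after the compact integration one is left with vector-valued data on a Levi, and it is not explained how this recombines into a \emph{finite} sum of integrals $Z(W^i_{\mathrm{Sh}},s,W^i)$ with $W^i\in\mathcal W^\psi(\tau)$ itself, which is what (ii) asserts; carrying this out would amount to an unfolding of the type of Proposition \ref{P:red1}, i.e.\ to the hard part of the paper rather than a remark. (The absorption of the monomials $|\det p_i|^{\frac12-s}$ by one-sided translations is fine; the issue is upstream of that.)

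For contrast, the paper disposes of (ii) and (iii) with a much softer device: the smooth dual of $\mathcal W^\psi(\tau)$ is spanned by the functionals $L_{h,K}\colon W\mapsto\int_K W(hk)\,dk$, so every matrix coefficient is a finite linear combination of functions $g\mapsto\int_K W(hkg)\,dk$; choosing $K$ so that $\Phi_{\wsh}$ is bi-$K$-invariant, folding the $K$-integral into the zeta integral and conjugating by $h$ gives $Z(\wsh,s,f)=Z\bigl(\chi_\pi(\det h)^{-1}\wsh(\cdot\,\diag(h,h)^{w_n}),s,W(\cdot\,h)\bigr)$, which is (ii) with a single term per basic functional, and (iii) is the case $h=1$ with $K$ small. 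Your construction in (iii) (a dual vector whose Kirillov function is compactly supported modulo $V$ near the identity) is essentially the Kirillov-model avatar of $L_{1,K}$, and it can be repaired by \emph{defining} the matrix coefficient through that smooth functional instead of invoking the bridge identity; but your argument for (ii) does not establish the statement for general generic $\tau$.
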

\begin{proof}
(i) For any sufficiently large $r$, $1_{C\cap\gll_n}W|\cdot|^r$ is an $L^1$-function of $\gll_n$ for any open compact subset $C$ of $\mathrm M_n(F)$ and $W\in \mathcal W^\psi(\tau)$ (see \cite[(2.3.6), (3.1)]{JPS1}).
Thus it immediately holds by Lemma \ref{L:ks}, \ref{L:L2}.

(ii) We can assume that $f(g)=\langle W(\cdot g), L_{h,K}\rangle$ for some $W\in\mathcal W^\psi(\tau)$ and open compact subgroup $K$ of $\gll_n$ such that $\Phi_{\wsh}$ is bi-$K$-invariant.
Then, by (i), we have
$$Z(\wsh,s,f)=\int_{\gll_n}\Phi_{\wsh} (h^{-1}gh)W(gh)|\det g|^{s-\frac{1}{2}}dg=Z(\chi(\det h)^{-1}\wsh(\cdot \diag (h,h)^{w_n}),s,W(\cdot h))$$
 if $\mathrm{Re}(s)$ is sufficiently large.

(iii) Take sufficiently small $K$ and put $f(g)=\langle W(\cdot g),L_{1,K}\rangle$.
\end{proof}

\begin{prop}\label{P:gen2}
Let $\wsh \in \mathcal W^\psi_{\mathrm{Sh}}(\pi_n)$ and $W\in\mathcal W^\psi(\tau)$.
Then, $L(s,\pi\boxtimes\tau)^{-1}Z(\wsh,s,W)$ defines an entire function of $s$ and $Z(\wsh,s,W)\in L(s,\pi\boxtimes\tau)\cc[q^{-s},q^s]$.
\end{prop}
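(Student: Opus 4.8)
The plan is to reduce Proposition \ref{P:gen2} to the integrality of the Rankin--Selberg integrals of Jacquet, Piatetski-Shapiro and Shalika \cite{JPSS} attached to the pair $\pi\boxtimes\tau$ (recall $\pi\in\mathrm{Irr_{gen}}\gll_2$ and $\tau\in\mathrm{Irr_{gen}}\gll_n$). Concretely, the goal is to show that for each $\wsh\in\mathcal W^\psi_{\mathrm{Sh}}(\pi_n)$ and $W\in\mathcal W^\psi(\tau)$ there are finitely many $W'_i\in\mathcal W^\psi(\pi)$ and $W_i\in\mathcal W^\psi(\tau)$ such that, for $\mathrm{Re}(s)$ sufficiently large,
$$Z(\wsh,s,W)=\sum_i\Psi(s;W'_i,W_i),$$
where $\Psi(s;\cdot,\cdot)$ denotes the $\gll_2\times\gll_n$ Rankin--Selberg integral of \cite{JPSS}. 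Granting this, since every $\Psi(s;W',W)$ is a rational function of $q^{-s}$ lying in $L(s,\pi\boxtimes\tau)\cc[q^{-s},q^s]$, we conclude both that $Z(\wsh,s,W)$ extends meromorphically and that $L(s,\pi\boxtimes\tau)^{-1}Z(\wsh,s,W)\in\cc[q^{-s},q^s]$, which is exactly the claim.

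To produce the decomposition I would first pass from the Shalika model to the Zelevinsky model. Setting $W_{\mathrm{Ze}}=\mathcal T_n^{-1}\wsh\in\mathcal W^\psi_{\mathrm{Ze}}(\pi_n)$ and unwinding the definitions of $\mathcal T_n$ (Proposition \ref{P:zs}) and of $\Phi_{\wsh}$, one gets
$$\Phi_{\wsh}(g)=\int_{\bar{\mathfrak n}_n}W_{\mathrm{Ze}}\!\left(\left(\begin{smallmatrix}g&v\\0&1_n\end{smallmatrix}\right)^{w_n}\right)dv,$$
the integrand being pointwise compactly supported. Substituting this into $Z(\wsh,s,W)$ and merging the $v$-integral with the $\gll_n$-integral, I would then decompose the $\gll_n$-integral along $N_n\backslash\gll_n$ (with $N_n$ the upper-triangular unipotent subgroup of $\gll_n$), use the $\psi$-equivariance of $W$ and the left $U_{(1^{2n})}$-equivariance of $W_{\mathrm{Ze}}$, and feed in the restriction formula of Remark \ref{R:div}, which expresses $W_{\mathrm{Ze}}$ on $M_{(2^n)}$ as a product of Whittaker functions of $\pi$ twisted by $|\cdot|^{(n-1)/2},\dots,|\cdot|^{-(n-1)/2}$. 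After reassembling the resulting unipotent integrations, the expression should take the shape of the $\gll_2\times\gll_n$ integral of \cite{JPSS}. As emphasized in Remark \ref{R:sad}, it is essential here that $k=2$: the $w_n$-conjugated unipotent groups that occur (in particular $N'_{\bm n,2}$ and the group through which $\left(\begin{smallmatrix}g&v\\0&1_n\end{smallmatrix}\right)^{w_n}$ travels) are then unipotent radicals of genuine parabolic subgroups of $\gll_n\subset\gll_{2n}$, so the partial integrations and interchanges of integration order needed to recombine the domain into $N_n\backslash\gll_n$ together with the auxiliary matrix-block integration are legitimate.

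The hard part is exactly this last identification of integration domains: one must track how the permutation $w_n$ interleaves the $2n$ coordinates and verify that the Haar measures and the character $\Psi_{(2,n)}$ line up with the standard $\gll_2\times\gll_n$ Rankin--Selberg data. This is a finite and completely explicit bookkeeping computation rather than a conceptual difficulty, but it is where essentially all the work resides, and it genuinely relies on $k=2$ (our method does not extend to the general Speh representations of \cite{cfk}). All the manipulations take place for $\mathrm{Re}(s)$ large, where absolute convergence is provided by Proposition \ref{P:gen1}(i) together with the standard growth estimates for Whittaker functions of $\pi$; the identity then propagates to all $s\in\cc$ by meromorphic continuation.
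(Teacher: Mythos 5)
Your overall route is the paper's own route (pass to the Zelevinsky model via $\mathcal T_n$, unfold against the $\gll_n$-integration, and try to land on Rankin--Selberg integrals for $\pi\boxtimes\tau$), but the step you dismiss as ``completely explicit bookkeeping'' is precisely where the content of the proof lies, and as stated it does not go through. After merging the $N_n$-integration with the $\gll_n$-integral one obtains, for $\mathrm{Re}(s)$ large, an integral over $U_{(1^n)}\backslash\gll_n$ of $W_{\rm Ze}\bigl(x_n\left(\begin{smallmatrix}g&0\\0&1_n\end{smallmatrix}\right)^{w_n}\bigr)W(g)|\det g|^{s-\frac12}$, where $x_n$ is a fixed unipotent element. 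The kernel $g\mapsto W_{\rm Ze}\bigl(x_n\left(\begin{smallmatrix}g&0\\0&1_n\end{smallmatrix}\right)^{w_n}\bigr)$ is a $\psi^{-1}$-quasi-Whittaker function on all of $\gll_n$; it is \emph{not} a Whittaker function of $\pi$ on a $\gll_2$-block, and Remark \ref{R:div} cannot simply be ``fed in'': that remark only describes restrictions of $W_{\rm Ze}$ to Levi subgroups, whereas here the argument involves $x_n$ and a full Iwasawa decomposition of $g$. So no amount of measure/character bookkeeping turns this expression into a literal $\gll_2\times\gll_n$ integral of \cite{JPSS}. What is actually needed (and what the paper does) is a support analysis: using the standard support lemma for Whittaker functions together with Remark \ref{R:div} applied to $W_{\rm Ze}(x_n\,\cdot)$, one shows that the Iwasawa torus coordinates $b_2, b_3^{\pm1},\dots,b_n^{\pm1}$ are confined to compact sets on the support of the integrand. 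Only after this truncation do the ``main terms'' become genuine \cite{JPSS} integrals $\int_{U_{(1^2)}\backslash\gll_2}W'_\pi(g_2)W'(\diag(g_2,1_{n-2}))|\det g_2|^{s-\frac{n-2}{2}}d\bar g_2$ with coefficients in $\cc[q^{-s},q^s]$, and even then there remain leftover terms which are \emph{not} Rankin--Selberg integrals: truncated $\gll_2$-integrals (with an indicator $1_{x(\mathcal O,\mathcal O)}((0\ 1)g_2)$) and one-variable integrals $\int_{F^\times}W'_\pi(\diag(a,1))W''(\diag(a,1_{n-1}))|a|^{s-\frac n2}d^\times a$. Their membership in $L(s,\pi\boxtimes\tau)\cc[q^{-s},q^s]$ requires a separate, genuinely $\gll_2$-specific input, namely the explicit description of $\{\Phi_{W_\pi}\}$ (the Kirillov asymptotics of \cite[Theorem 4.7.2, 4.7.3]{Bump_1997}) and a further vanishing argument for the truncated piece. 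None of this appears in your proposal.

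Relatedly, your claimed identity $Z(\wsh,s,W)=\sum_i\Psi(s;W'_i,W_i)$ with honest \cite{JPSS} data is stronger than what any formal unfolding yields; it is only known \emph{a posteriori}, because the span of the \cite{JPSS} integrals is exactly $L(s,\pi\boxtimes\tau)\cc[q^{-s},q^s]$, which is the very statement you are trying to prove. Using it as the starting point of the reduction is therefore close to circular. The fix is to aim for the weaker (and correct) conclusion that $Z(\wsh,s,W)$ is a finite sum of \cite{JPSS} integrals multiplied by Laurent polynomials plus residual terms lying in $L(s,\pi\boxtimes\tau)\cc[q^{-s},q^s]$, and to supply the support estimates and the $\gll_2$ Kirillov-model computation that make this decomposition available.
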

\begin{proof}
First we remark that for any $W'\in\mathcal W^\psi(\tau') \ (\tau'\in {\rm Irr_{gen}}\gll_{m})$, there is a constant $C'>0$ such that 
$$W'\left(u\diag(a_1,\dots,a_{m'})k \right)\neq0\Rightarrow |a_1|\leq C'|a_2|\leq\dots\leq {C'}^{m-1}|a_{m}|$$
for any $u\in U_{(1^{m})},a_i\in F^\times$ and $k\in \gll_{m}(\mathcal O)$ in general (\cite[Lemma 3.2]{lapid_mao_2020}).

If $n=1$, then $Z(\wsh,s,W)$ coincides with a Rankin-Selberg zeta integral for $\pi\boxtimes \tau$.
In this case, $Z(\wsh,s,W)\in L(s,\pi\boxtimes\tau)\cc[q^{-s},q^s]$ is trivial.

Assume that $n\geq2$.
Then, for any $u\in U_{(1^n)},h\in\gll_n,$ and $h'\in\gll_{2n}$, we have
$$\wsh\left(\begin{pmatrix}u&0\\0&1_n\end{pmatrix}^{w_n} h'\right)W(uh)=\wsh\left(\begin{pmatrix}u&0\\0&1_n\end{pmatrix}^{w_n}
x_{ n}
 h'\right)W(h),$$
where we put
$$x_m:=\begin{pmatrix}
1_m&\begin{smallmatrix}
0&0\\
1_{m-1}&0
\end{smallmatrix}\\
&1_m
\end{pmatrix}^{w_m}=\diag(1,\overbrace{(\begin{smallmatrix}
1&0\\
1&1
\end{smallmatrix}),\dots,(\begin{smallmatrix}
1&0\\
1&1
\end{smallmatrix})}^{m-1},1).$$
Thus we have 
\begin{align*}
Z(\wsh,s,W)&=\int_{U_{(1^n)}\backslash\gll_n}W_{\rm Ze}\left(x_{ n}
\begin{pmatrix}
g_n&\\
&1_n
\end{pmatrix}^{w_n}
\right)W(g_n)|\det g_n|^{s-\frac{1}{2}}d\overline{g}_n
\end{align*}
if $\mathrm{Re}(s)$ is sufficiently large, where $W_{\rm Ze}:=\mathcal T_n^{-1}\wsh\in\mathcal W^\psi_{\mathrm{Ze}}(\pi_n).$
We can write
\begin{align*}
Z(\wsh,s,W)&=\int_{U_{(1^2)}\backslash\gll_2\times (F^\times)^{n-2}\times \gll_n(\mathcal O)}W_{\rm Ze}\left(
x_n\begin{pmatrix}
\diag(g_2,a_3,\dots,a_n)k&\\
&1_n
\end{pmatrix}^{w_n}
\right)\\
&W(\diag(g_2,a_2,\dots,a_n)k)|\det g_2|^{s-n+2-\frac{1}{2}}|a_3|^{s-n+5-\frac{1}{2}}\dots|a_n|^{s+n-1-\frac{1}{2}}d\overline{g}_2d^\times a_3\dots d^\times a_n dk\\
&=\int_{(F^\times)^{n}\times \gll_2(\mathcal O)\times\gll_n(\mathcal O)}W_{\rm Ze}\left(
x_n
\begin{pmatrix}
\diag((\begin{smallmatrix}
a_1&\\
&a_2
\end{smallmatrix}))k_2,a_3,\dots,a_n)k&\\
&1_n
\end{pmatrix}^{w_n}
\right)\\
&W(\diag((\begin{smallmatrix}
a_1&\\
&a_2
\end{smallmatrix}))k_2,a_2,\dots,a_n)k)|a_1|^{s-n+1-\frac{1}{2}}\dots|a_n|^{s+n-1-\frac{1}{2}}d\overline{g}_2d^\times a_3\dots d^\times a_n dk_2dk.
\end{align*}

Let $b_i\in F^\times \ (i=1,\dots,n)$ such that $|b_1|\leq C|b_2|\leq\dots\leq {C}^{n-1}|b_n|$, where $C$ is the constant in the above remark for $W$.
Consider when the inequality
\begin{align*}
\mathcal F(b_1,\dots,b_n,k')&:=W_{\rm Ze}
\left(
x_n\begin{pmatrix}
\diag(b_1,b_2,\dots,b_n)k'&\\
&1_n
\end{pmatrix}^{w_n}
\right)\\
&\left(=W_{\rm Ze}
\left(
\begin{pmatrix}
1&&&&&&&&\\
&1&&&&&&&\\
&1&1&&&&&&\\
&&&1&&&&&\\
&&&1&1&&&&\\
&&&&&\ddots&&&\\
&&&&&&1&&\\
&&&&&&1&1&\\
&&&&&&&&1
\end{pmatrix}
\begin{pmatrix}
b_1&&&&&&\\
&1&&&&&\\
&&b_2&&&&\\
&&&1&&&\\
&&&&\ddots&&\\
&&&&&b_n&\\
&&&&&&1
\end{pmatrix}
\begin{pmatrix}
k'&\\
&1_m
\end{pmatrix}^{w_n}
\right)\right)\\
&\neq 0
\end{align*}
holds for some  $k'\in \gll_n(\mathcal O)$.
If $|b_n|$ is sufficiently large, then
\begin{align*}
&\mathcal F(b_1,\dots,b_n,k')\\
=&W_{\rm Ze}\left(
\begin{pmatrix}
1&&&&&&\\
&1&&&&&\\
&1&1&&&&\\
&&&\ddots&&&\\
&&&&1&&\\
&&&&1&1&\\
&&&&&&1_3
\end{pmatrix}
\begin{pmatrix}
b_1&&&&&&\\
&1&&&&&\\
&&b_2&&&&\\
&&&1&&&\\
&&&&\ddots&&\\
&&&&&b_n&\\
&&&&&&1
\end{pmatrix}
\begin{pmatrix}
1_{2n-2}&&\\
&1&b_n^{-1}\\
&&1
\end{pmatrix}
\begin{pmatrix}
k'&\\
&1_n
\end{pmatrix}^{w_m}
\right)\\
=&W_{\rm Ze}\left(
\begin{pmatrix}
1&&&&&&\\
&1&&&&&\\
&1&1&&&&\\
&&&\ddots&&&\\
&&&&1&&\\
&&&&1&1&\\
&&&&&&1_3
\end{pmatrix}
\begin{pmatrix}
b_1&&&&&&\\
&1&&&&&\\
&&b_2&&&&\\
&&&1&&&\\
&&&&\ddots&&\\
&&&&&b_n&\\
&&&&&&1
\end{pmatrix}
\begin{pmatrix}
k'&\\
&1_n
\end{pmatrix}^{w_n}
\right)=0
\end{align*}
by Remark \ref{R:div} and the above remark.
If $|b_n|$ is sufficiently small, then, since
$$\begin{pmatrix}
1&&&&&&\\
&1&&&&&\\
&1&1&&&&\\
&&&\ddots&&&\\
&&&&1&&\\
&&&&1&1&\\
&&&&&&1
\end{pmatrix}=
\begin{pmatrix}
1&&&&&&\\
&-1&1&&&&\\
&&1&&&&\\
&&&\ddots&&&\\
&&&&-1&1&\\
&&&&&1&\\
&&&&&&1
\end{pmatrix}
w'
\begin{pmatrix}
1&&&&&&\\
&1&1&&&&\\
&&1&&&&\\
&&&\ddots&&&\\
&&&&1&1&\\
&&&&&1&\\
&&&&&&1
\end{pmatrix}
$$
($w':=\diag(1\overbrace{(\begin{smallmatrix}
&1\\
1&
\end{smallmatrix}
),\dots,(\begin{smallmatrix}
&1\\
1&
\end{smallmatrix}
)}^{n-1},1)$),
we have
\begin{align*}
&\mathcal F(b_1,\dots,b_n,k')\\
=&W_{\rm Ze}\left(
\begin{pmatrix}
b_1&&&&&&&\\
&-b_2&&&&&&\\
&&1&&&&&\\
&&&-b_3&&&&\\
&&&&\ddots&&&\\
&&&&&1&&\\
&&&&&&-b_n&\\
&&&&&&&1_2
\end{pmatrix}
w'
\begin{pmatrix}
1&&&&&&\\
&1&b_2&&&&\\
&&1&&&&\\
&&&\ddots&&&\\
&&&&1&b_n&\\
&&&&&1&\\
&&&&&&1
\end{pmatrix}
\begin{pmatrix}
k'&\\
&1_n
\end{pmatrix}^{w_n}
\right)\\=&W_{\rm Ze}\left(
\begin{pmatrix}
b_1&&&&&&&\\
&-b_2&&&&&&\\
&&1&&&&&\\
&&&-b_3&&&&\\
&&&&\ddots&&&\\
&&&&&1&&\\
&&&&&&-b_n&\\
&&&&&&&1_2
\end{pmatrix}
w'
\begin{pmatrix}
k'&\\
&1_n
\end{pmatrix}^{w_n}
\right)=0
\end{align*}
also by Remark \ref{R:div} and the above remark.
Then, by repeating a similar argument, we have that there is a constant $c>0$ such that if
$\mathcal F(b_1,\dots,b_n,k')W\left(\diag (b_1,\dots,b_n)k' \right) \neq 0$ for some $k'\in \gll_n(\mathcal O)$, then $|b_2|,|b_3|^{\pm1},|b_4|^{\pm1},\dots,|b_n|^{\pm1}<c$.

Then, integrating in $a_i \ (i>2)$ and $k$, and dividing the integral interval with respect to $a_2$, we have
\begin{align*}
Z(\wsh,s,W)=\sum_i F_i\int_{U_{(1^2)}\backslash\gll_2}W'_{\pi i}(g_2)W'_i(\diag(g_2,1_{n-2}))|\det g_2|^{s-\frac{n-2}{2}}1_{x(\mathcal O, \mathcal O)}((\begin{smallmatrix}
0&1
\end{smallmatrix})g_2
)d\overline g_2\\
+\sum_j F'_j\int_{F^\times}W_{\pi j}''(\diag(a,1))W''_j(\diag(a,1_{n-1}))|a|^{s-\frac{n}{2}}d^\times a
\end{align*}
for some $F_i,F_j'\in\cc[q^{-s},q^s], W'_{\pi i}, W''_{\pi j}\in\mathcal W_{\rm Ze}^{\psi^{-1}}(\pi),W'_{i}, W''_{j}\in\mathcal W^{\psi}(\tau)$ ($i=1,\dots,l_1,j=1,\dots,l_2$) and $x\in F^\times$ such that $|x|\ll c$ by Remark \ref{R:div}.
Since the set $\{\Phi_{W_{\pi}} \ | \  W_{\pi}\in \mathcal W^\psi(\pi) \}$ has already been calculated explicitly (see \cite[Theorem 4.7.2, 4.7.3]{Bump_1997}), it is easy to
check that the latter sum is an element of $L(s,\pi\boxtimes\tau)\cc[q^{-s},q^s]$.
If $n=2$, then the integrals in the former sum are Rankin-Selberg zeta integrals for $\pi\boxtimes \tau$.
If $n>2$, then for each $i$, we have
\begin{align*}
&\int_{U_{(1^2)}\backslash\gll_2}W'_{\pi i}(g_2)W_i'(\diag(g_2,1_{n-2}))|\det g_2|^{s-\frac{n-2}{2}}1_{x(\mathcal O, \mathcal O)}((\begin{smallmatrix}
0&1
\end{smallmatrix})g_2
)d\overline g_2\\
=&\int_{U_{(1^2)}\backslash\gll_2}W'_{\pi i}(g_2)W'_i(\diag(g_2,1_{n-2}))|\det g_2|^{s-\frac{n-2}{2}}d\overline g_2\\
-&\int_{F^\times}\int_{|x|<|b|}\int_{\gll_2(\mathcal O)}W_{\pi i}'(\diag(a,b)k_2)W_i'(\diag(\diag(a,b)k_2,1_{n-2}))|a|^{s-\frac{n}{2}}|b|^{s-\frac{n}{2}-2}dkd^\times b d^\times a.
\end{align*}
The former integral is a Rankin-Selberg zeta integral for $\pi\boxtimes \tau$.
On the other hand, if $|b|$ is sufficiently large, then $W'(\diag(\diag(a,b)k_2,1_{n-2})=0$ for any $a$ and $k_2$.
Therefore,  integrating in $b$ and $k_2$, we can see that the latter integral is an element of $L(s,\pi\boxtimes\tau)\cc[q^{-s},q^s]$.

Consequently, we have  $Z(\wsh,s,W)\in L(s,\pi\boxtimes\tau)\cc[q^{-s},q^s]$.
\end{proof}
We can see that $I(\pi,\tau)(=\langle Z(\wsh,s,f)\ | \ \wsh\in \mathcal W^{\psi}_{\mathrm{Sh}}(\pi_n), \ f: \text{a matrix coefficient of } \tau \rangle_\cc)$ is a nonzero fractional ideal of $\cc[q^{-s},q^s]$, in the same way as for the space generated by the zeta integrals of Godement and Jacquet for any admissible representation of any general linear group (see the discussion below Theorem 3.3 of \cite{Godement_1972}). 
Then, by Proposition \ref{P:gen1} and \ref{P:gen2}, we have the following:

\begin{cor}\label{C:genthm}
We have 
\begin{align*}
I(\pi,\tau)&=\langle Z(\wsh,s,W)\ | \ \wsh\in \mathcal W_{\mathrm{Sh}}(\pi_n), \ W\in\mathcal W^\psi(\tau) \rangle_\cc\\
&\subset L(s,\pi\boxtimes\tau)\cc[q^{-s},q^s]
\end{align*}
In particular, Theorem \ref{T:main1} (ii) holds for irreducible generic $\tau$ without assuming that $\pi$ is approximately tempered.
\end{cor}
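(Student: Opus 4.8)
The plan is to read the corollary off formally from Propositions \ref{P:gen1} and \ref{P:gen2} together with the Godement--Jacquet-style remark recorded just before the statement; no new analytic input is needed, and (as throughout \S\ref{SS:az}) temperedness of $\pi$ plays no role.

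First I would identify the two spans. By Proposition \ref{P:gen2}, every $Z(\wsh,s,W)$ lies in $L(s,\pi\boxtimes\tau)\cc[q^{-s},q^s]\subset\cc(q^{-s})$, so $\langle Z(\wsh,s,W)\mid \wsh\in\mathcal W^\psi_{\mathrm{Sh}}(\pi_n),\ W\in\mathcal W^\psi(\tau)\rangle_\cc$ is genuinely a $\cc$-subspace of $\cc(q^{-s})$. Proposition \ref{P:gen1}(ii) writes each $Z(\wsh,s,f)$, for $\mathrm{Re}(s)$ large, as a finite sum $\sum_i Z(\wsh^i,s,W^i)$; as the right-hand side is a rational function of $q^{-s}$, the left-hand side must extend meromorphically to all of $\cc$ and agree with it, so $I(\pi,\tau)\subseteq\langle Z(\wsh,s,W)\rangle_\cc$ inside $\cc(q^{-s})$. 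The reverse inclusion is exactly Proposition \ref{P:gen1}(iii). This gives the first equality, and feeding it back into Proposition \ref{P:gen2} gives the displayed containment $I(\pi,\tau)\subset L(s,\pi\boxtimes\tau)\cc[q^{-s},q^s]$; in particular the meromorphic continuation asserted in Theorem \ref{T:main1}(ii) holds here as well.

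For the rest of Theorem \ref{T:main1}(ii) I would use the remark just above the corollary: the Godement--Jacquet argument shows that $I(\pi,\tau)$ is a nonzero fractional ideal of $\cc[q^{-s},q^s]$, and moreover that it contains $\cc[q^{-s},q^s]$ itself --- one exhibits data with $Z(\wsh,s,f)$ a unit $cq^{ms}$ of $\cc[q^{-s},q^s]$ by taking $\Phi_{\wsh}$ supported on a small translate $g_0K$ with $K\subset\gll_n(\mathcal O)$ and a matrix coefficient $f$ with $f(g_0)\neq0$, Lemma \ref{L:ks}(i) guaranteeing $\Phi_{\wsh}$ is compactly supported. Since $\cc[q^{-s},q^s]\simeq\cc[X,X^{-1}]$ is a principal ideal domain, a fractional ideal of it containing the ring is of the form $P(q^{-s})^{-1}\cc[q^{-s},q^s]$ for a polynomial $P$, unique once normalized by $P(0)=1$. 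This is Theorem \ref{T:main1}(ii) for irreducible generic $\tau$; combined with the inclusion above it also records that $P(q^{-s})$ divides the denominator of $L(s,\pi\boxtimes\tau)$, i.e.\ $L(\pi;s,\tau)\in L(s,\pi\boxtimes\tau)\cc[q^{-s},q^s]$, which is the input toward part (vi).

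I do not expect a real obstacle: the analytic substance was already extracted in Propositions \ref{P:gen1} and \ref{P:gen2}. The one point deserving care is the bookkeeping in the second paragraph --- reading Proposition \ref{P:gen1}(ii),(iii) as identities in $\cc(q^{-s})$, which simultaneously delivers meromorphic continuation of the matrix-coefficient integrals $Z(\wsh,s,f)$ --- and this is exactly what the rationality statement of Proposition \ref{P:gen2} licenses.
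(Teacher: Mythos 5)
Your overall route is the same as the paper's: the corollary is obtained precisely by combining Propositions \ref{P:gen1} and \ref{P:gen2} with the Godement--Jacquet-style remark preceding the statement, and your first paragraph (identification of the two spans via Proposition \ref{P:gen1}(ii),(iii), with meromorphic continuation of $Z(\wsh,s,f)$ read off from the rationality supplied by Proposition \ref{P:gen2}, then the containment in $L(s,\pi\boxtimes\tau)\cc[q^{-s},q^s]$) is exactly how the paper argues.

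The one weak point is the justification in your second paragraph for $\cc[q^{-s},q^s]\subset I(\pi,\tau)$. Lemma \ref{L:ks}(i) does not say that $\Phi_{\wsh}$ is compactly supported on $\gll_n$ --- it only places the support inside a compact subset of $\mathrm M_n(F)$, and the support may accumulate at singular matrices --- and, more to the point, it says nothing about being able to \emph{choose} $\wsh$ so that $\Phi_{\wsh}$ is supported on a prescribed small coset $g_0K$. This is the one step at which the Godement--Jacquet argument does not transfer verbatim, because $\Phi_{\wsh}$ is not an arbitrary Schwartz--Bruhat function. The claim is nevertheless true, and the correct input is the Kirillov--Shalika model: the restriction of the (ordinary) Shalika model to $D_n^{2n}$ is a nonzero submodule of $\ind^{D_n^{2n}}_{U_{(n,n)}}\psi\circ{\rm tr}$, hence contains $\cind^{D_n^{2n}}_{U_{(n,n)}}\psi\circ{\rm tr}$ (\cite[Lemma 3.12]{lapid_mao_2020}, recorded in the subsection on the Kirillov--Shalika model), and elements of the latter restrict on the $\gll_n$-factor of $D_n^{2n}$ to arbitrary compactly supported smooth functions; transporting this back through the identification with $\mathcal W^\psi_{\mathrm{Sh}}(\pi_n)$ produces a $\wsh$ with $\Phi_{\wsh}$ a multiple of $1_{g_0K}$, whence $Z(\wsh,s,f)=cq^{ms}$ with $c\neq0$ for any matrix coefficient $f$ with $f(g_0)\neq0$ and $K$ small. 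With that substitution the rest of your argument (the $\cc[q^{-s},q^s]$-module structure via translations, the principal-ideal argument, uniqueness of $P$ with $P(0)=1$, and the divisibility $L(\pi;s,\tau)\in L(s,\pi\boxtimes\tau)\cc[q^{-s},q^s]$ used toward (vi)) is correct and coincides with what the paper delegates to the discussion below Theorem 3.3 of \cite{Godement_1972}.
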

For the proof of Theorem \ref{T:main1} (vi), we end this subsection with the following lemma.
\begin{lemm}\label{L:12}
If $n=1,2$ and $\tau$ is irreducible and supercuspidal, then we have
$L(\pi;s,\tau)=L(s,\pi\boxtimes\tau)$.
\end{lemm}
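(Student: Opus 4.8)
The plan is to reduce the lemma to the Rankin--Selberg theory of Jacquet--Piatetski-Shapiro--Shalika by a direct computation of the zeta integral in the two small-rank cases, exploiting that for irreducible supercuspidal $\tau$ of $\gll_n$ with $n=1,2$ the matrix coefficients are compactly supported modulo the center. By Corollary \ref{C:genthm} we already know $I(\pi,\tau)\subset L(s,\pi\boxtimes\tau)\cc[q^{-s},q^s]$, so $L(\pi;s,\tau)^{-1}\in L(s,\pi\boxtimes\tau)\cc[q^{-s},q^s]$, i.e.\ $L(s,\pi\boxtimes\tau)/L(\pi;s,\tau)$ is a polynomial in $q^{\pm s}$. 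Hence it suffices to prove the reverse divisibility: that $L(s,\pi\boxtimes\tau)^{-1}Z(\wsh,s,W)$ is an element of $\cc[q^{-s},q^s]$ for all $\wsh$, $W$ and moreover that these exhaust $L(s,\pi\boxtimes\tau)\cc[q^{-s},q^s]$ — equivalently, that some choice of data makes $Z(\wsh,s,W)$ equal to $L(s,\pi\boxtimes\tau)$ up to a unit in $\cc[q^{-s},q^s]$.

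First I would treat $n=1$. Here $\tau$ is a character, $\Phi_{\wsh}$ is (by Lemma \ref{L:ks}(i)) a compactly supported function on $F^\times$, and the Shalika model condition $\Phi_{\pi_1(\diag(a,b)^{w_1})\wsh}=\Phi_{\wsh}(b^{-1}a)\chi_\pi(b)$ together with the $\psi$-equivariance identifies $\Phi_{\wsh}$ with a Whittaker-type function for $\pi$ on $\gll_1$-translates; then $Z(\wsh,s,W)$ is literally a $\gll_2\times\gll_1$ Rankin--Selberg integral, and the statement $I(\pi,\tau)=L(s,\pi\boxtimes\tau)\cc[q^{-s},q^s]$ is exactly \cite{JPSS}. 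This case is observed already inside the proof of Proposition \ref{P:gen2} ("If $n=1$, then $Z(\wsh,s,W)$ coincides with a Rankin--Selberg zeta integral"), so it is essentially free.

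For $n=2$ I would run the unfolding computation from the proof of Proposition \ref{P:gen2} with $k=2$: writing $W_{\rm Ze}=\mathcal T_2^{-1}\wsh\in\mathcal W^\psi_{\mathrm{Ze}}(\pi_2)$ and using Remark \ref{R:div} to restrict $W_{\rm Ze}$ to the Levi $M_{(2,2)}$, where it lands in $\mathcal W^\psi_{\mathrm{Ze}}(\pi)|\cdot|^{1/2}\otimes\mathcal W^\psi_{\mathrm{Ze}}(\pi)|\cdot|^{-1/2}$. The integral over $U_{(1^2)}\backslash\gll_2$ then separates, after partial integration with respect to the lower-triangular unipotent in $N'_{(2),2}$ (which, as Remark \ref{R:sad} emphasizes, is a genuine unipotent radical precisely because $k=2$), into a finite sum of $\gll_2\times\gll_2$ Rankin--Selberg integrals $\int_{U_{(1^2)}\backslash\gll_2}W_{\pi}(g)W_\tau(g)|\det g|^{s-1/2}\,dg$ with $W_\pi\in\mathcal W^{\psi^{-1}}(\pi)$, $W_\tau\in\mathcal W^\psi(\tau)$, times elements of $\cc[q^{-s},q^s]$. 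Each such integral lies in $L(s,\pi\boxtimes\tau)\cc[q^{-s},q^s]$ by \cite{JPSS}, giving one inclusion; for the other, I would choose $\wsh$ and $W$ so that the unfolded sum collapses to a single Rankin--Selberg integral with data attaining the $L$-factor (possible again by \cite{JPSS}, since for supercuspidal $\tau$ every such integral is available and the space they generate is all of $L(s,\pi\boxtimes\tau)\cc[q^{-s},q^s]$). Combined with Corollary \ref{C:genthm} this pins down $L(\pi;s,\tau)=L(s,\pi\boxtimes\tau)$.

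The main obstacle is the bookkeeping in the $n=2$ unfolding: one must verify that the partial integration over $N'_{(2),2}$ really converts $\Phi_{\wsh}$ restricted to $\diag(\gll_2,1_2)^{w_2}$ into (a combination of) Whittaker functions of $\pi$ evaluated on $\gll_2$, and that no extra factor beyond $\cc[q^{-s},q^s]$ is introduced — in particular that the support estimates of \cite[Lemma 3.2]{lapid_mao_2020} (the "above remark" in the proof of Proposition \ref{P:gen2}) keep every auxiliary integral rational in $q^{-s}$ with no new poles. A secondary subtlety is checking the reverse inclusion: exhibiting explicit $\wsh, W$ realizing the $L$-factor, which amounts to tracing a known optimal choice of Rankin--Selberg data back through the isomorphism $\mathcal T_2\colon\mathcal W^\psi_{\mathrm{Ze}}(\pi_2)\to\mathcal W^\psi_{\mathrm{Sh}}(\pi_2)$ of Proposition \ref{P:zs}; since $\mathcal T_2$ is an isomorphism this is a matter of unwinding definitions rather than a real difficulty.
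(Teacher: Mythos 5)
Your proposal is correct and follows essentially the same route as the paper: the $n=1$ case is the Rankin--Selberg integral itself, and for $n=2$ the paper likewise reruns the unfolding from the proof of Proposition \ref{P:gen2} (via $\mathcal T_2^{-1}$, the element $x_2$ and Remark \ref{R:div}), writing $Z(\wsh,s,W)$ as a truncated $\gll_2\times\gll_2$ Rankin--Selberg integral plus a term that lies in $\cc[q^{-s},q^s]$ because $\tau$ is supercuspidal, so that the $Z$'s generate exactly $L(s,\pi\boxtimes\tau)\cc[q^{-s},q^s]$, which together with Corollary \ref{C:genthm} gives the equality of $L$-factors.
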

\begin{proof}
If $n=1$, then it is trivial.

Assume $n=2$.
Then, for any $\wsh \in \mathcal W^\psi_{\mathrm{Sh}}(\pi_2)$ and $W\in\mathcal W^\psi(\tau)$, similar to Proposition \ref{P:gen1}, we have
\begin{align*}
Z(\wsh,s,W)&=\int_{U_{(1^2)}\backslash\gll_2}W_{\rm Ze}\left(
\begin{pmatrix}
\diag(1,-1)g_2
&\\
&1_2
\end{pmatrix}{w_2}
\right)W(g_2)|\det g_2|^{s-\frac{1}{2}}1_{x(\mathcal O, \mathcal O)}((\begin{smallmatrix}
0&1
\end{smallmatrix})g_2
)d\overline{g}_2\\
+&\int_{F^\times}\int_{|x|<|b|}\int_{\gll_2(\mathcal O)}
W_{\rm Ze}\left(
x_2
\begin{pmatrix}
\diag(a,b)k&\\
&1_2
\end{pmatrix}^{w_2}
\right)
W(\diag(a,b)k_2)|a|^{s-1}|b|^{s-3}dkd^\times b d^\times a.
\end{align*}
for some $x\in F^\times$.
Then, since $\tau$ is supercuspidal, the latter integral is an  element of $\cc[q^{-s},q^{s}]$.
Thus, clearly the functions $Z(\wsh,s,W)$ generate $L(s,\pi\boxtimes \tau)\cc[q^{-s},q^s]$.
\end{proof}

\subsection{More results in the case that $\pi$ is approximately tempered}\label{SS:red}
For the rest of this section, we assume that $\pi$ is approximately tempered.
Then, for any $W\in \mathcal W^\psi_{(n_1,n_2)}(\pi_n)$,
\begin{align}W|_{\diag( \gll_{2n_1},\gll_{2n_2})}\in \mathcal \wsh^\psi(\pi_{n_1})|\cdot|^{\frac{1}{2}n_2}\otimes\mathcal \wsh^\psi(\pi_{n_2})|\cdot|^{-\frac{1}{2}n_1}\end{align}
holds, where $n_1+n_2=n$ (see Remark \ref{R:div}).
We fix  $\tau_i\in\mathrm{Alg'}\gll_{n_i}$ ($i=1,2$) and put $\tau_0=\tau_1\times\tau_2$.

Similar to $L_{h,K}$ in the previous subsection, for any open compact subgroup $K$ of $\gll_n$, $h\in\gll_n$, and $v_i^\vee\in\tau_i$ ($i=1,2$), we define $L_{h,K,v_1,v_2}\in\tau_0^\vee\simeq\tau_1^\vee\times\tau_2^\vee$ by 
$$\langle v,L_{h,K,v_1^\vee,v_2^\vee}\rangle=\int_K\langle v(hk),v_1^\vee\otimes v_2^\vee\rangle dk$$
for any $v\in\tau_1\times\tau_2$.
Then, it is easy to verify that $\tau_0^\vee=\langle L_{h,K,v_1^\vee,v_2^\vee}\rangle_\cc.$
\begin{prop}\label{P:red1}
Let $\wsh\in\mathcal W^{\psi}_{\mathrm{Sh}}(\pi_n)$ and $f$ a matrix coefficient of $\tau_0$.
\begin{enumerate}
\renewcommand{\labelenumi}{(\roman{enumi})}
\item There are $W_{\mathrm{Sh},i}^j\in \mathcal W^{\psi}_{\mathrm{Sh}}(\pi_{n_j})$ and matrix coefficients $f_{i}^j$ of $\tau_i$ ($i=1,\dots,l,  \ j=1,2$) such that
$$Z(\wsh,s,f)=\sum_{1\leq i \leq l}Z(W_{\mathrm{Sh},i}^1,s,f_{i}^1)Z(W_{\mathrm{Sh},i}^2,s,f_{i}^2)$$
if $\mathrm{Re}(s)$ is sufficiently large.
\item
For the above  $W_{\mathrm{Sh},i}^j$ and $f_{i}^j$, we have
$$Z(\widetilde\wsh,1-s,f^\vee)=\sum_{1\leq i \leq l}Z(\widetilde {W_{\mathrm{Sh},i}^1},1-s,(f_{i}^1)^\vee)Z(\widetilde {W_{\mathrm{Sh},i}^2},1-s,(f_{i}^2)^\vee)$$
 if $\mathrm{Re}(s)$ is sufficiently small.
\end{enumerate}
\end{prop}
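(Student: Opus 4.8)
The plan is to unwind the zeta integral $Z(\wsh,s,f)$ by passing through the intermediate model $\mathcal W^\psi_{(n_1,n_2)}(\pi_n)$, using the decomposition in equation (1) together with Corollary \ref{C:im}. First I would use Corollary \ref{C:im} to write $\wsh = \mathcal T_{(n_1,n_2),2} W$ for a unique $W\in\mathcal W^\psi_{(n_1,n_2)}(\pi_n)$, so that $\wsh = \int_{N_{(n_1,n_2),2}} W(u\,\cdot)\,du$. Substituting this into the definition of $\Phi_{\wsh}$ and then into $Z(\wsh,s,f)$, the integral over $N_{(n_1,n_2),2}$ — whose conjugate $w_n N_{(n_1,n_2),2} w_n^{-1}$ consists (for $k=2$, which is our running assumption) of the unipotent radical of an upper-triangular parabolic of $\gll_n\hookrightarrow\gll_{2n}$ on the appropriate block — can be folded into the $\gll_n$-integral by the usual partial-integration (unfolding) maneuver, exactly the mechanism flagged in Remark \ref{R:sad}. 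The upshot is a reformulation of $Z(\wsh,s,f)$ as an integral over $U_{(n_1,n_2)}\backslash\gll_n$ of the restriction of $W$ to $\diag(\gll_{2n_1},\gll_{2n_2})$ against the restriction of $f$, with the modulus factor $\delta_{P_{(n_1,n_2)}}$ absorbing the power-of-$|\det|$ bookkeeping.

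Next I would invoke equation (1): $W|_{\diag(\gll_{2n_1},\gll_{2n_2})}$ lies in $\mathcal W^\psi_{\mathrm{Sh}}(\pi_{n_1})|\cdot|^{n_2/2}\otimes\mathcal W^\psi_{\mathrm{Sh}}(\pi_{n_2})|\cdot|^{-n_1/2}$, hence is a finite sum $\sum_i W^1_{\mathrm{Sh},i}\otimes W^2_{\mathrm{Sh},i}$ (twisted by the appropriate absolute-value characters, whose exponents $\pm n_j/2$ are precisely engineered so that the normalized-induction shifts match the $s-\tfrac12$ normalization in each smaller zeta integral). Correspondingly, a matrix coefficient $f$ of $\tau_0=\tau_1\times\tau_2$ restricted to $\gll_{n_1}\times\gll_{n_2}$, after integrating over the compact pieces as in the construction of $L_{h,K,v_1^\vee,v_2^\vee}$, decomposes as a finite sum of products $f^1_i(g_1)f^2_i(g_2)$ with $f^j_i$ a matrix coefficient of $\tau_j$; here I would use that $\tau_0^\vee = \langle L_{h,K,v_1^\vee,v_2^\vee}\rangle_\cc$ to reduce to matrix coefficients of this special ``factorizable'' shape, and absorb the $K$-integration into the $W^j_{\mathrm{Sh},i}$ using their $K$-equivariance exactly as in the proof of Proposition \ref{P:gen1}(ii). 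Combining, the integral over $U_{(n_1,n_2)}\backslash\gll_n$ factors through $(\gll_{n_1}\times\gll_{n_2})$ and splits as $\sum_i Z(W^1_{\mathrm{Sh},i},s,f^1_i)Z(W^2_{\mathrm{Sh},i},s,f^2_i)$, which is (i). Throughout, absolute convergence for $\mathrm{Re}(s)$ large is what legitimizes interchanging the finite sums with the integral and the Fubini step that effects the factorization; this is supplied by Theorem \ref{T:main1}(i) for each of the three groups involved.

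For (ii), I would apply exactly the same chain of manipulations to $\widetilde\wsh$ in place of $\wsh$, with $1-s$ in place of $s$ and $f^\vee$ in place of $f$. The only point requiring care is the compatibility of the operation $\wsh\mapsto\widetilde\wsh$ (which intertwines $\mathcal W^\psi_{\mathrm{Sh}}(\pi_n)$ and $\mathcal W^{\psi^{-1}}_{\mathrm{Sh}}(\pi_n^\vee)$ via the explicit formula involving $\left(\begin{smallmatrix}1_n&0\\0&-1_n\end{smallmatrix}\right)$ and $\left(\begin{smallmatrix}0&1_n\\-1_n&0\end{smallmatrix}\right)$) with the block decomposition $\diag(\gll_{2n_1},\gll_{2n_2})$ and with the intertwiner $\mathcal T_{(n_1,n_2),2}$ of Corollary \ref{C:im}: one checks that conjugating by these Weyl-type elements permutes the two blocks in a controlled way and that, after re-indexing, $\widetilde{(\,\cdot\,)}$ on $\mathcal W^\psi_{\mathrm{Sh}}(\pi_n)$ restricts to (a twist of) the tensor product of the corresponding operations $\widetilde{(\,\cdot\,)}$ on $\mathcal W^\psi_{\mathrm{Sh}}(\pi_{n_1})$ and $\mathcal W^\psi_{\mathrm{Sh}}(\pi_{n_2})$, while $f\mapsto f^\vee$ obviously factors as $f^1_i\mapsto (f^1_i)^\vee$, $f^2_i\mapsto (f^2_i)^\vee$. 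Granting this compatibility, the same factorization as in (i) applied at $1-s$ yields the displayed identity, now valid for $\mathrm{Re}(s)$ sufficiently small (equivalently $\mathrm{Re}(1-s)$ sufficiently large), which is (ii).

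The main obstacle I anticipate is the bookkeeping in the unfolding step — correctly identifying $w_n N_{(n_1,n_2),2} w_n^{-1}$ as the unipotent radical of the relevant parabolic of $\gll_n$ (using $k=2$), tracking the $\delta$-factors so that the shifts $|\cdot|^{\pm n_j/2}$ in equation (1) combine with $|\det|^{s-1/2}$ to give precisely $|\det g_j|^{s-1/2}$ on each factor, and verifying the behavior of the tilde operation under the block swap. None of these are deep, but getting all the exponents and Weyl elements to line up is where the real work lies; the factorization of $f$ and the splitting of the resulting integral are then routine given Proposition \ref{P:gen1}(ii) and Theorem \ref{T:main1}(i).
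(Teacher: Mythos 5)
Your overall strategy is the paper's: reduce to matrix coefficients of the form $\langle v(\cdot\,g),L_{h,K,v_1^\vee,v_2^\vee}\rangle$, pass to the intermediate model $\mathcal W^\psi_{(n_1,n_2)}(\pi_n)$ by a unipotent integration, apply the decomposition (1) of $W|_{\diag(\gll_{2n_1},\gll_{2n_2})}$, and split the integral via the Iwasawa decomposition for $P_{(n_1,n_2)}$, with (ii) obtained by rerunning the computation on the dual side so that the \emph{same} data $W^j_{\mathrm{Sh},i}$, $f^j_i$ appear. However, there is one concrete error in the mechanism as you state it: you write $\wsh=\mathcal T_{(n_1,n_2),2}W=\int_{N_{(n_1,n_2),2}}W(u\,\cdot)\,du$ and propose to fold the $N_{(n_1,n_2),2}$-integral into the $\gll_n$-integral, asserting that $w_nN_{(n_1,n_2),2}w_n^{-1}$ is the unipotent radical of a parabolic of $\gll_n\subset\gll_{2n}$. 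That identification is wrong: for $k=2$ one has $w_nN_{(n_1,n_2),2}w_n^{-1}=\left(\begin{smallmatrix}1_n&\bar{\mathfrak n}_{(n_1,n_2)}\\&1_n\end{smallmatrix}\right)$, a subgroup of the Shalika unipotent $U_{(n,n)}$, which does not lie in $\diag(\gll_n,1_n)$ and so cannot be absorbed into the $g$-integration. The group singled out in Remark \ref{R:sad} is $N'_{(n_1,n_2),2}=\left(\begin{smallmatrix}U_{(n_1,n_2)}&0\\0&1_n\end{smallmatrix}\right)^{w_n}$, and the correct maneuver runs in the opposite direction: after reducing to the special matrix coefficients and using the Iwasawa decomposition $\gll_n=U_{(n_1,n_2)}M_{(n_1,n_2)}\gll_n(\mathcal O)$, the integration of $\Phi_{\wsh}$ over $U_{(n_1,n_2)}$ is literally $\mathcal T^{-1}_{(n_1,n_2),2}\wsh=W\in\mathcal W^\psi_{(n_1,n_2)}(\pi_n)$; then (1), the modulus factor $\delta_{P_{(n_1,n_2)}}^{-1/2}$, and the finiteness of the $\gll_n(\mathcal O)$-integration give (i). With this repair your argument coincides with the paper's.

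For (ii), you defer the essential point to an unverified ``one checks'' about compatibility of $\wsh\mapsto\widetilde\wsh$ with the block decomposition and the intertwiner. The paper does not argue abstractly: it recomputes $Z(\widetilde\wsh,1-s,f^\vee)$ using the opposite Iwasawa decomposition $\gll_n=\gll_n(\mathcal O)U_{(n_1,n_2)}M_{(n_1,n_2)}$ and moves the swap matrix $\left(\begin{smallmatrix}0&1_n\\1_n&0\end{smallmatrix}\right)$ through the blocks, so that in each block the argument becomes $\left(\left(\begin{smallmatrix}g_j&0\\0&1_{n_j}\end{smallmatrix}\right)\left(\begin{smallmatrix}0&1_{n_j}\\1_{n_j}&0\end{smallmatrix}\right)\right)^{w_{n_j}}$ together with the factors $\chi_\pi(\det g_j)^{-1}$, i.e.\ exactly $\Phi_{\widetilde{W^j_{\mathrm{Sh},i}}}$; comparing term by term with the integral obtained in (i) is what guarantees that the identity holds for the same $W^j_{\mathrm{Sh},i}$ and $f^j_i$, which is the actual content of (ii) and the input needed later for the multiplicativity of $\gamma$. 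So your outline is the right one, but the block-swap verification is a computation you still owe, not a formality.
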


\begin{proof}
(i) We can assume that $f(g)=\langle v(\cdot g), L_{h,K,v_1^\vee,v_2^\vee}\rangle$, where $K$ is an open compact subgroup of $\gll_n$ such that $\Phi_{\wsh}$ is bi-$K$-invariant and $v\in \tau_0$.
Since $1_{C\cap\gll_n}\langle v(\cdot),v_1^\vee\otimes v_2^\vee\rangle|\cdot|^r$ is an $L^1$-function of $\gll_n$ for any compact subset $C$ of $\mathrm M_n(F)$ and sufficiently large $r$ by Iwasawa decomposition and \cite[Theorem 3.3]{Godement_1972}, if $\mathrm{Re}(s)$ is sufficiently large, then we have
\begin{align*}
Z(\wsh,s,f)&=\int_{\gll_n\times K}\Phi_{\wsh} (g)\langle v(hkg),v_1^\vee\otimes v_2^\vee\rangle|\det g|^{s-\frac{1}{2}}dgdk\\
&=\int_{\gll_n}\Phi_{\wsh} (h^{-1}gh)\langle v(gh),v_1^\vee\otimes v_2^\vee\rangle|\det g|^{s-\frac{1}{2}}dg
\end{align*}
by Lemma \ref{L:ks}, \ref{L:L2}.
By Iwasawa decomposition $\gll_n= U_{(n_1,n_2)}M_{(n_1,n_2)}\gll_n(\mathcal O)$, the above integral can be written as
\begin{align*}
\int_{\gll_{n_1}\times\gll_{n_2} \times\gll_n(\mathcal O)}W
 \left(
\begin{pmatrix}
(\begin{smallmatrix}
g_1&\\
&1_{n_1}
\end{smallmatrix})^{w_{n_1}}
&\\
&(\begin{smallmatrix}
g_2&\\
&1_{n_2}
\end{smallmatrix})^{w_{n_2}}
\end{pmatrix}
\begin{pmatrix}
kh&\\
&h
\end{pmatrix}^{w_{n}}
\right)
\delta_{P_{(n_1,n_2)}}(
(\begin{smallmatrix}
g_1&\\
&g_2
\end{smallmatrix})
)^{-\frac{1}{2}}\\
\langle \tau_1(g_1)\otimes\tau_2(g_2)v(kh),v_1^\vee\otimes v_2^\vee\rangle|\det g|^{s-\frac{1}{2}}dg_1dg_2dk
\times\chi_\pi(\det h)^{-1},
\end{align*}
where $$W=\int_{\mathrm M_{n_1,n_2}(F)}\wsh\left(
\begin{pmatrix}
1_{n_1}&X&\\
&1_{n_2}&\\
&&1_n
\end{pmatrix}^{w_n}
\cdot\right)dX \in \mathcal W^{\psi}_{(n_1,n_2)}(\pi_{n})$$
(Corollary \ref{C:im}).
Since the integration over $\gll_n(\mathcal O)$ becomes a finite sum, we have (i) by (1).

(ii) We have
\begin{align*}
Z(\widetilde\wsh,1-s,f)&=\int_{\gll_n\times K}\Phi_{\widetilde\wsh} (g)\langle v(hkg^{-1}),v_1^\vee\otimes v_2^\vee\rangle|\det g|^{-s+\frac{1}{2}}dgdk\\
&=\int_{\gll_n}\Phi_{\widetilde\wsh} (h^{-1}gh)\langle v(g^{-1}h),v_1^\vee\otimes v_2^\vee\rangle|\det g|^{-s+\frac{1}{2}}dg.
\end{align*}
if $\mathrm{Re}(s)$ is sufficiently small.
By Iwasawa decomposition $\gll_n= \gll_n(\mathcal O) U_{(n_1,n_2)}M_{(n_1,n_2)}$, the above integral can be written as
\begin{align*}
&\int_{\gll_{n_1}\times\gll_{n_2} \times\gll_n(\mathcal O)}W
 \left(
\begin{pmatrix}
(\begin{smallmatrix}
g_1&\\
&1_{n_1}
\end{smallmatrix})^{w_{n_1}}
&\\
&(\begin{smallmatrix}
g_2&\\
&1_{n_2}
\end{smallmatrix})^{w_{n_2}}
\end{pmatrix}
\left(
\begin{pmatrix}
h&\\
&k^{-1}h
\end{pmatrix}
\begin{pmatrix}
&1_n\\
1_n&
\end{pmatrix}
\right)^{w_{n}}
\right)
\delta_{P_{(n_1,n_2)}}((\begin{smallmatrix}
g_1&\\
&g_2
\end{smallmatrix})
))^{-\frac{1}{2}}\\
&\chi_\pi(\det g_1)^{-1}\chi_\pi(\det g_2)^{-1}\langle \tau_1(g^{-1}_1)\otimes\tau_2(g^{-1}_2)v(k^{-1}h),v_1^\vee\otimes v_2^\vee\rangle|\det g_1\det g_2|^{-s+\frac{1}{2}}dg_1dg_2dk
\times\chi_\pi(\det h)^{-1}\\
=&\int_{\gll_{n_1}\times\gll_{n_2} \times\gll_n(\mathcal O)}W
 \left(
\begin{pmatrix}
((\begin{smallmatrix}
g_1&\\
&1_{n_1}
\end{smallmatrix})
(\begin{smallmatrix}
&1_{n_1}\\
1_{n_1}&
\end{smallmatrix})
)^{w_{n_1}}
&\\
&((\begin{smallmatrix}
g_2&\\
&1_{n_2}
\end{smallmatrix})
(\begin{smallmatrix}
&1_{n_2}\\
1_{n_2}&
\end{smallmatrix})
)^{w_{n_2}}
\end{pmatrix}
\begin{pmatrix}
kh&\\
&h
\end{pmatrix}^{w_{n}}
\right)
\delta_{P_{(n_1,n_2)}}((\begin{smallmatrix}
g_1&\\
&g_2
\end{smallmatrix})
))^{-\frac{1}{2}}\\
&\chi_\pi(\det g_1)^{-1}\chi_\pi(\det g_2)^{-1}\langle \tau_1(g^{-1}_1)\otimes\tau_2(g^{-1}_2)v(kh),v_1^\vee\otimes v_2^\vee\rangle|\det g_1\det g_2|^{-s+\frac{1}{2}}dg_1dg_2dk
\times\chi_\pi(\det h)^{-1}.
\end{align*}
Comparing the above integral and the integral in the proof of (i), we have (ii).
\end{proof}
%\begin{rem}\label{R:gamma}
%Considering the embedding of $\tau$ to the parabolically induced representation of some irreducible supercuspidal representation, we have Theorem \ref{T:main1} (ii) by Proposition \ref{P:red1} (i).
%Moreover,  to show Theorem \ref{T:main1} (iv), it suffices to consider supercuspidal representations by Proposition \ref{P:red1} (additionally, (v) holds automatically).
%We will show Theorem \ref{T:main1} (iv) for supercuspidal representations in the next section.
%\end{rem}
\begin{prop}\label{P:red2}
Let $\wsh^j\in\mathcal W^{\psi}_{\mathrm{Sh}}(\pi_{n_j})$ and $f^j$ a matrix coefficient of $\tau_j$ ($j=1,2$).
Then, there are $W_{\mathrm{Sh}}\in \mathcal W^{\psi}_{\mathrm{Sh}}(\pi_{n})$ and a matrix coefficient $f$ of $\tau_0$  such that
$$Z(W_{\mathrm{Sh}}^1,s,f^1)Z(W_{\mathrm{Sh}}^2,s,f^2)=Z(W_{\mathrm{Sh}},s,f)$$
if $\mathrm{Re}(s)$ is sufficiently large.
\end{prop}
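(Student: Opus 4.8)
The plan is to run the computation in the proof of Proposition~\ref{P:red1}(i) backwards. That computation expressed $Z(\wsh,s,f)$, for arbitrary data, as an integral over $\gll_{n_1}\times\gll_{n_2}\times\gll_n(\mathcal{O})$ of a product built out of the restriction of the intermediate-model vector $W=\mathcal{T}_{\bm n,2}^{-1}\wsh$ to the Levi $M_{(2n_1,2n_2)}$ and out of the matrix coefficient $f$ of $\tau_0=\tau_1\times\tau_2$. Given $\wsh^1,f^1$ and $\wsh^2,f^2$, I would engineer $W\in\mathcal{W}^\psi_{(n_1,n_2)}(\pi_n)$, then set $\wsh=\mathcal{T}_{\bm n,2}W$ (with $\bm n=(n_1,n_2)$), and choose $f$, so that this integral degenerates to exactly $Z(\wsh^1,s,f^1)Z(\wsh^2,s,f^2)$.

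The first and main ingredient is the surjectivity of the restriction map
\[
R\colon\mathcal{W}^\psi_{(n_1,n_2)}(\pi_n)\longrightarrow\mathcal{W}^\psi_{\mathrm{Sh}}(\pi_{n_1})|\cdot|^{\frac{n_2}{2}}\otimes\mathcal{W}^\psi_{\mathrm{Sh}}(\pi_{n_2})|\cdot|^{-\frac{n_1}{2}},\qquad W\longmapsto W|_{M_{(2n_1,2n_2)}}.
\]
By Remark~\ref{R:div} (the only place approximate temperedness of $\pi$ enters) this map is well defined, and a short bookkeeping with the Weyl elements $w_{n_1},w_{n_2},w_n$ shows it is $\gll_{2n_1}\times\gll_{2n_2}$-equivariant. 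It is nonzero, for if $R$ vanished then every $W\in\mathcal{W}^\psi_{(n_1,n_2)}(\pi_n)$ would vanish at the base coset, whence $W(g)=(g\cdot W)(e)=0$ for all $g$ by $\gll_{2n}$-stability, contradicting $\mathcal{W}^\psi_{(n_1,n_2)}(\pi_n)\neq 0$. Since $\pi_{n_1}\boxtimes\pi_{n_2}$ is irreducible, being an external tensor product of the irreducible Speh representations $\pi_{n_1}$ and $\pi_{n_2}$, the nonzero subrepresentation $\mathrm{image}(R)$ must be everything, so $R$ is onto; hence there is $W$ whose restriction to $M_{(2n_1,2n_2)}$ is the pure tensor $\wsh^1\otimes\wsh^2$ against the fixed twist factors. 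The second ingredient is to see a single product of matrix coefficients inside one matrix coefficient of $\tau_0$: writing $f^j=\langle\tau_j(\cdot)v_j,v_j^\vee\rangle$, one would take $f=\langle\tau_0(\cdot)\varphi,L_{e,K_0,v_1^\vee,v_2^\vee}\rangle$, where $\varphi\in\tau_0$ is the section supported on $P_{(n_1,n_2)}K_0$ with leading term $v_1\otimes v_2$ and $K_0\subset\gll_n(\mathcal{O})$ is an open compact subgroup chosen small enough that the compact integration below collapses cleanly.

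Feeding $\wsh$ and $f$ into the identity from the proof of Proposition~\ref{P:red1}(i), one finds that the pure-tensor restriction of $W$ contributes the single factor $\Phi_{\wsh^1}(g_1)\Phi_{\wsh^2}(g_2)$, that the modulus factor $\delta_{P_{(n_1,n_2)}}^{-1/2}$ exactly cancels the twists $|\det g_1|^{n_2/2}|\det g_2|^{-n_1/2}$ produced by $R$ so that $|\det g|^{s-\frac12}$ splits as $|\det g_1|^{s-\frac12}|\det g_2|^{s-\frac12}$, and that the support condition on $\varphi$ makes the integral over $\gll_n(\mathcal{O})$ collapse to a nonzero volume constant $c$ times $f^1(g_1)f^2(g_2)$. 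This yields $Z(\wsh,s,f)=c\,Z(\wsh^1,s,f^1)Z(\wsh^2,s,f^2)$ for $\mathrm{Re}(s)$ large, and replacing $W$ by $c^{-1}W$ gives the statement. The hard part is the surjectivity of $R$ — concretely, pinning down the equivariance and the non-vanishing — together with the need to arrange a \emph{single} product rather than a finite linear combination; the former is resolved by irreducibility of $\pi_{n_1}\boxtimes\pi_{n_2}$, and the latter by using the pure-tensor restriction and a section $\varphi$ concentrated near the identity coset, after which the remaining manipulations are the routine reversal of the computation in Proposition~\ref{P:red1}.
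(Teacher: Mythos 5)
Your proposal is correct and follows essentially the same route as the paper: the paper also picks $\wsh$ so that the intermediate-model vector $\mathcal T_{(n_1,n_2),2}^{-1}\wsh$ restricts on $\diag(\gll_{2n_1},\gll_{2n_2})$ to the pure tensor $\wsh^1|\cdot|^{n_2/2}\otimes\wsh^2|\cdot|^{-n_1/2}$ (using Corollary \ref{C:im}, Remark \ref{R:div}, and irreducibility of the $\pi_{n_j}$, exactly your surjectivity-of-$R$ argument), and builds $f$ from a section of $\tau_0=\tau_1\times\tau_2$ supported on $P_{(n_1,n_2)}K$ with value $\delta_{P_{(n_1,n_2)}}^{-1/2}\tau_1(m_1^{-1})v_1\otimes\tau_2(m_2^{-1})v_2$, after which the zeta integral unfolds to the product $Z(\wsh^1,s,f^1)Z(\wsh^2,s,f^2)$. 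The only cosmetic difference is that the paper normalizes the section so no extra constant $c$ appears, whereas you rescale at the end.
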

\begin{proof}
Since $\pi_m$ is irreducible for any $m$, we can take $\wsh\in \mathcal W^{\psi}_{\mathrm{Sh}}(\pi_{n})$ such that
$$\left(\int_{\mathrm M_{n_1,n_2}(F)}\wsh\left(
\begin{pmatrix}
1_{n_1}&X&\\
&1_{n_2}&\\
&&1_n
\end{pmatrix}^{w_n}
\cdot\right)dX\middle )\right|_{\diag(\gll_{2n_1},\gll_{2n_2})}
=\wsh^1|\cdot|^{\frac{n_2}{2}}\otimes\wsh^2|\cdot|^{-\frac{n_1}{2}}$$
 by Corollary \ref{C:im} and (1).
 Write $f^i=\langle \tau_i(\cdot)v_i,v_i^\vee \rangle$ ($v_i\in\tau_i, v_i^\vee\in\tau_i^\vee$) and define $v\in\tau_0$ by
 $$v(g)=\int_{P_{(n_1,n_2)}}1_K(pg)\delta_{P_{(n_1,n_2)}}(\diag (m_1,m_2))^{-\frac{1}{2}}\tau_1(m_1^{-1})v_1\otimes\tau_2(m_2^{-1})v_2d_rp,$$
 where $p=\begin{pmatrix}
m_1&*\\
&m_2
\end{pmatrix}$, $K$ is an open compact subgroup of $\gll_n$ such that $\Phi_{\wsh}$ is bi-$K$-invariant, and $d_rp$ is the right Haar measure of $P_{(n_1,n_2)}$.
Then, for $f=\langle \tau_0(\cdot)v,L_{h,K,v_1^\vee,v_2^\vee} \rangle$, we have
\begin{align*}
Z(\wsh,s,f)&=\int_{\gll_n}\Phi_{\wsh} (g)\langle v(g),v_1^\vee\otimes v_2^\vee\rangle|\det g|^{s-\frac{1}{2}}dg\\
&=\int_{\gll_n\times P_{(n_1,n_2)}}\Phi_{\wsh} (p^{-1}g)1_K(g)\delta_{P_{(n_1,n_2)}}(\diag (m_1,m_2))^{-\frac{1}{2}}f^1(m_1^{-1})f^2(m_2^{-1})|\det p^{-1}g|^{s-\frac{1}{2}}d_rpdg\\
&=\int_{\gll_{n_1}\times\gll_{n_2}}\Phi_{\wsh^1} (m_1)\Phi_{\wsh^2} (m_2)f^1(m_1)f^2(m_2)|\det m_1 \det m_2|^{s-\frac{1}{2}}dm_1dm_2\\
&=Z(W_{\mathrm{Sh}}^1,s,f^1)Z(W_{\mathrm{Sh}}^2,s,f^2)
\end{align*}
if $\mathrm{Re}(s)$ is sufficiently large.
\end{proof}
We give the proofs of Theorem \ref{T:main1} (ii), (iii), and (v):
\begin{proof}[proof of Theorem \ref{T:main1} (ii), (iii), (v)]
(ii) Embedding $\tau$ to the parabolic induction  of some irreducible generic representation and using Proposition \ref{P:red1} (i) repeatedly, we can assume that $\tau$ is generic and irreducible.
However, we have already proved (ii) for generic representations (Corollary \ref{C:genthm}).

(iii) It follows from Proposition \ref{P:red1} (i) and Proposition \ref{P:red2}.

(v) It follows from Proposition \ref{P:red1} (ii).
\end{proof}

\subsection{The proof of Theorem \ref{T:main1} (vi)}\label{SS:pf6}
We have already shown Theorem \ref{T:main1} (i), (ii), (iii), and (v) ((iv) will be shown in the next section).
At the end of this section, we prove  (vi).
We suppose that  Theorem \ref{T:main1} (iv) holds here.

\begin{proof}[proof of Theorem \ref{T:main1} (vi)]
We can assume that $\tau={\rm St}(\rho,m)$ for some $\rho\in{\rm Irr_{sc}}$ and $m$ by Theorem \ref{T:main1} (iii).
If $L(s, \pi\boxtimes\tau)=1$, then $L(\pi,s,\tau)=L(s, \pi\boxtimes\tau)=1$  by Corollary \ref{C:genthm}.
Thus we only have to consider the case where
\begin{enumerate}
\item $\rho\in \mathrm{Irr}\gll_1$, $\pi=\rho'\times\rho^{-1}|\cdot|^t$ or $\mathrm{St}(\rho^{-1}|\cdot|^t,2)$ ($\rho'\in \mathrm{Irr}\gll_1, t\in\cc \text{ s.t. } \rho'\rho|\cdot|^{-t}\neq|\cdot|^{\pm1}$)  or
\item $\rho\in\mathrm{Irr_{sc}}\gll_2$, $\pi=\rho^\vee|\cdot|^t$ ($t\in\cc$).
\end{enumerate}

By Corollary \ref{C:genthm}, we have 
$$L(\pi;s,\tau)=Q(q^{-s},q^s)L(s,\pi\boxtimes\tau)$$
and
$$L(\pi^\vee;1-s,\tau^\vee)=\tilde Q(q^{-s},q^s)L(1-s,\pi^\vee\boxtimes\tau^\vee)$$
for some $Q(X,Y),\tilde Q(X,Y)\in\cc[X,Y]$.
Dividing the second equation by the first equation, we have that
$\gamma(\pi;s,\tau)$ coincides with $Q(q^{-s},q^s)\tilde Q(q^{-s},q^s)^{-1}\gamma(s,\pi\boxtimes\tau)$ up to a unit.
By Theorem \ref{T:main1} (v), we have
$$\gamma(\pi;s,\tau)=\prod^m_{i=1}\gamma(\pi;s+m/2+1/2-i,\rho).$$
By Lemma \ref{L:12}, the equation $L(\pi;s,\rho)=L(s, \pi\boxtimes\rho)$ holds.
Thus $\gamma(\pi;s,\tau)$ coincides with $\gamma(s,\pi\boxtimes\tau)$ up to a unit.
Consequently, we have
$$Q(q^{-s},q^s)=cq^{ls}\tilde Q(q^{-s},q^s)$$
for some $c\in F^\times$ and $l\in\zz$.
Namely, $Q(q^{-s},q^s)\in\cc[q^{-s},q^s]$ is a common factor of $L(s,\pi\boxtimes\tau)^{-1}$ and $L(1-s,\pi^\vee\boxtimes\tau^\vee)^{-1}$.

Assume that $\pi$ and $\tau$ satisfy the conditions in 2.
Then, we have
$$L(s,\pi\boxtimes\tau)^{-1}=1-q^{-s-t-\frac{m-1}{2}}$$ and
$$L(1-s,\pi^\vee\boxtimes\tau^\vee)^{-1}=1-q^{-1+s+t-\frac{m-1}{2}}=-q^{-1+s+t-\frac{m-1}{2}}(1-q^{-s-t+\frac{m+1}{2}})$$
by Theorem of \cite[\S (8.2)]{JPSS}.
Thus $L(s,\pi\boxtimes\tau)^{-1}$ and $L(1-s,\pi^\vee\boxtimes\tau^\vee)^{-1}$ are relatively prime, and we have $L(\pi;s,\tau)=L(s,\pi\boxtimes\tau)$.

Assume that $\pi$ and $\tau$ satisfy the conditions in 1.
We also assume $n>1$ since we have already shown that $L(\pi;s,\tau)=L(s,\pi\boxtimes\tau)$ for $n=1$ (Lemma \ref{L:12}).
Then, we have
$$L(s,\pi\boxtimes\tau)^{-1}=
\begin{cases}
(1-q^{-s-t-\frac{n}{2}})(1-q^{-s-t-\frac{n}{2}+1}) &\text{ if } \pi=\mathrm{St}(\rho^{-1}|\cdot|^t,2); \\
(1-q^{-s-t'-\frac{n-1}{2}})(1-q^{-s-t-\frac{n-1}{2}})  &\text{ if } \pi=\rho^{-1}|\cdot|^{t'}\times\rho^{-1}|\cdot|^{t} \ (|\cdot|^{t-t'}\neq|\cdot|^{\pm 1});\\
(1-q^{-s-t-\frac{n-1}{2}})  &\text{ otherwise }
\end{cases}$$ and
$$L(1-s,\pi^\vee\boxtimes\tau^\vee)^{-1}=
\begin{cases}
(1-q^{-s-t+\frac{n}{2}+1})(1-q^{-s-t+\frac{n}{2}}) &\text{ if } \pi=\mathrm{St}(\rho^{-1}|\cdot|^t,2); \\
(1-q^{-s-t'+\frac{n+1}{2}})(1-q^{-s-t+\frac{n+1}{2}})  &\text{ if } \pi=\rho^{-1}|\cdot|^{t'}\times\rho^{-1}|\cdot|^{t} \ (|\cdot|^{t-t'}\neq|\cdot|^{\pm 1});\\
(1-q^{-s-t+\frac{n+1}{2}})  &\text{ otherwise}
\end{cases}$$
up to a unit.
Therefore, we have $L(\pi;s,\tau)=L(s,\pi\boxtimes\tau)$ unless $\pi=\rho^{-1}|\cdot|^{t\pm n}\times\rho^{-1}|\cdot|^{t}$ for some $t\in\cc$.
However, the case $\pi=\rho^{-1}|\cdot|^{t\pm n}\times\rho^{-1}|\cdot|^{t}$ does not occur since $\pi$ is approximately tempered.
\end{proof}

\begin{rem}
Using \cite[Proposition C.10]{cfk}, Theorem \ref{T:main1} (vi) follows from Theorem \ref{T:main1} (iii) and Corollary \ref{C:genthm} immediately.
\end{rem}
%\section{The dimension of  $\Ho_{\gll_n\times\gll_n}(\pi_n,\tau\times\tau^\vee\chi_\pi)$}\S\ref{S:gamma}
\section{The functional equation}\label{S:fe}
In this section, we prove the functional equation (Theorem \ref{T:main1} (iv)).
As mentioned in Remark \ref{R:sad},  it has already been proven.
However, for the convenience of the readers, we prove it without the results in \cite{cfk}.

The key to the proof is the following proposition.
\begin{prop}\label{P:uniq}
Let $\tau\in\mathrm{Irr_{sc}}\gll_n$ and $\tau'\in\mathrm{Irr}\gll_n$.
Then, we have
$$\dim_\cc{\rm Hom}_{\gll_n\times\gll_{n}}(\pi_{n},\tau\boxtimes\tau')\leq 1$$
if $\tau'=\tau^\vee\chi_\pi$ and
$$\dim_\cc{\rm Hom}_{\gll_n\times\gll_{n}}(\pi_{n},\tau\boxtimes\tau')=0$$
otherwise.
(Here, we think of $\gll_n\times\gll_n$ as the diagonal subgroup $\diag(\gll_n,\gll_n)$ of $\gll_{2n}$.)
\end{prop}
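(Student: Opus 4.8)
The plan is to realize the branching problem as a question about the Shalika model and then reduce to a Whittaker-type uniqueness statement. First I would use the realization $\pi_n=\mathrm{Sp}(\pi,n)\hookrightarrow \mathcal W^\psi_{\mathrm{Sh}}(\pi_n)\subset\ind^{\gll_{2n}}_{U_{(n^2)}^{w_n}}\Psi_{(2,n)}$, together with the transitivity relation $P=\diag(P_{(n^2)}^{w_n})U_{(n,n)}$ that sits behind the intermediate models of \S\ref{SS:mod}. Restricting along $\diag(\gll_n,\gll_n)\subset\gll_{2n}$ and applying Frobenius reciprocity, a nonzero element of $\Ho_{\gll_n\times\gll_n}(\pi_n,\tau\boxtimes\tau')$ is the same as a nonzero functional on $\pi_n$ transforming by $\tau^\vee\boxtimes\tau'^\vee$ under $\gll_n\times\gll_n$. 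The key structural input is the identity $\Phi_{\pi_n(\diag(g_1,g_2)^{w_n})\wsh}=\Phi_{\wsh}(g_2^{-1}\!\cdot g_1)\chi_\pi(\det g_2)$ recorded in \S\ref{SS:main}: it shows that the $\gll_n\times\gll_n$-action on $\mathcal W^\psi_{\mathrm{Sh}}(\pi_n)$, transported through $\wsh\mapsto\Phi_{\wsh}$, is (up to the twist by $\chi_\pi$) the two-sided regular action on a space of functions on $\gll_n$. This immediately forces $\tau'\simeq\tau^\vee\chi_\pi$ whenever the Hom-space is nonzero (pair against the center, or more precisely against left/right translations), giving the vanishing clause, so the whole problem collapses to the single bound
\[
\dim_\cc\Ho_{\gll_n\times\gll_n}(\pi_n,\tau\boxtimes\tau^\vee\chi_\pi)\le 1.
\]

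To prove this bound I would exploit that $\tau$ is supercuspidal. Via the above dictionary the space in question is isomorphic to $\Ho_{\gll_n\times\gll_n}\!\big(\mathcal W^\psi_{\mathrm{Sh}}(\pi_n),\,\tau^\vee\boxtimes\tau\chi_\pi^{-1}\big)$, equivalently to the space of $\gll_n\times\gll_n$-equivariant maps from the space $\{\Phi_{\wsh}\}$ (a subspace of functions on $\gll_n$, with two-sided translation) to the matrix-coefficient space of $\tau$. The natural pairing here is exactly the integral $Z(\wsh,\tfrac12,f)$ appearing in Theorem \ref{T:main1}: by Theorem \ref{T:main1}(i),(ii) it converges (after normalization by $L(\pi;s,\tau)$) and is nonzero, so the Hom-space is at least one-dimensional. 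For the upper bound, I would pass to the Jacquet/derivative filtration of the restriction of $\pi_n$ (or of its Shalika model) along the mirabolic subgroup $P_n\subset\gll_n$ sitting on the first factor, using Bernstein–Zelevinsky theory. Because $\tau$ is supercuspidal, only the top piece of this filtration — the "full derivative" layer, which is induced from a Whittaker-type datum — can contribute; all other layers are killed by cuspidality (they involve proper parabolic inductions on the $\gll_n$-side, which have no $\tau$-quotient). On that top layer the multiplicity is governed by uniqueness of the Whittaker/Shalika functional together with uniqueness of the Kirillov model, each of which is one-dimensional by \cite{lapid_mao_2020} (Theorem 3.1) for the Shalika side and by the classical Gelfand–Kazhdan theory for $\tau$.

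The main obstacle I anticipate is making the derivative/Jacquet-module bookkeeping precise: one must show that the restriction of $\mathcal W^\psi_{\mathrm{Sh}}(\pi_n)$ to the relevant mirabolic (or to $\gll_n\times P_n$) has a Bernstein–Zelevinsky-type filtration whose subquotients, other than the leading one, carry no $\tau\boxtimes\tau^\vee\chi_\pi$-quotient when $\tau$ is supercuspidal, and that the leading subquotient contributes exactly one dimension. The twist by $\chi_\pi$ and the interlacing of the two $\gll_n$-factors (they are entangled through $g_2^{-1}\!\cdot g_1$, not acting on disjoint blocks) make the combinatorics slightly delicate; I would handle this by first conjugating so that one factor acts by left and the other by right translation on functions on $\gll_n$, reducing to the statement that a space of functions on $\gll_n$ with a Shalika-type transformation property on the "completed" side admits at most one $(\tau^\vee,\tau)$-isotypic two-sided functional — which is precisely a relative analogue of the multiplicity-one theorem for $\gll_n\times\gll_n$ acting on $\gll_n$, provable by Gelfand–Kazhdan (transpose-invariance of the relevant distributions) once the support estimates of \cite[Lemma 3.2]{lapid_mao_2020} are invoked to guarantee the integrals defining the pairing are nondegenerate.
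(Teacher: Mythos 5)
Your overall blueprint (pass to the Shalika/Kirillov picture, filter the restriction, let supercuspidality of $\tau$ kill all layers but one, and compute the surviving layer as a compactly induced module) is the right one, but two of your shortcuts are genuine gaps. First, the vanishing clause does \emph{not} follow ``immediately'' from the identity $\Phi_{\pi_n(\diag(g_1,g_2)^{w_n})\wsh}=\Phi_{\wsh}(g_2^{-1}\cdot g_1)\chi_\pi(\det g_2)$: an element of $\Ho_{\gll_n\times\gll_n}(\pi_n,\tau\boxtimes\tau')$ has no a priori reason to factor through $\wsh\mapsto\Phi_{\wsh}$, i.e.\ through restriction to $D_n^{2n}$, and even the full Kirillov--Shalika space $\mathcal K_\psi(\pi_n)$ is in general strictly larger than its compactly induced part $I_n^{\chi_\pi}$, which is only the unique irreducible $D_n^{2n}$-submodule (Proposition \ref{P:sh}). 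It is the compactly induced part alone whose restriction to $\gll_n\times\gll_n$ is $\cind^{\gll_n\times\gll_n}_{\Delta\gll_n}\chi_\pi(\det)$, and hence forces $\tau'=\tau^\vee\chi_\pi$ with multiplicity at most one (by Frobenius reciprocity and Schur -- your appeal to Gelfand--Kazhdan is not needed). Showing that any nonzero $\Ho$ is detected by that bottom piece, i.e.\ that the restriction map $\Ho_{\gll_n\times\gll_n}(\pi_n,\tau\boxtimes\tau')\to\Ho(I_n^{\chi_\pi},\tau\boxtimes\tau')$ is injective, is exactly where supercuspidality of $\tau$ and the filtration enter; the vanishing statement needs this machinery just as much as the multiplicity bound, so nothing here is immediate.

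Second, the filtration you defer as ``the main obstacle'' is the heart of the proof and is not supplied; note also that it is not a Bernstein--Zelevinsky filtration along the mirabolic $P_n$ of the first $\gll_n$-factor, but along the subgroups $D_i^{2n}$ of $\gll_{2n}$, obtained by iterating Zelevinsky's restriction theorem for Speh representations (Proposition \ref{P:zell}, via Lemma \ref{L:tes} and Proposition \ref{P:fl}) and by the bijectivity of $\mathcal W'^\psi_{\mathrm{Sh}}(\pi_n)\twoheadrightarrow\mathcal K_\psi(\pi_n)$ from \cite{lapid_mao_2020}. Since Proposition \ref{P:zell} requires $\pi$ supercuspidal, the non-supercuspidal case needs a separate argument (the paper realizes $\pi_n$ as a quotient of $\mathbf 1_{\gll_n}\chi_1\times\mathbf 1_{\gll_n}\chi_2$ and uses the doubling-type filtration, Lemma \ref{L:nsc}). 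Moreover, your claim that ``all other layers are killed by cuspidality'' is false for $n=2$: there a discarded layer involves $\pi|\cdot|^{1/2}$ itself, which is supercuspidal on $\gll_2=\gll_n$, so $\tau=\pi|\cdot|^{1/2}$ can pair with it nontrivially; this case is handled in the paper by \cite[Proposition 7.1]{lapid_mao_2020} together with the symmetry $\Ho(\pi_n,\tau\boxtimes\tau')\simeq\Ho(\pi_n,\tau'\boxtimes\tau)$ (Lemma \ref{L:n12}), and $n>2$ is what makes the contradiction argument $\Ho_{\gll_n}(\pi_{n-j}|\cdot|^{j/2}*\sigma|_{\gll_{2j-n}},\tau)=0$ work. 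Without the filtration itself and these case distinctions your argument does not close.
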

We give the proof in \S \ref{SS:pf}.

By this proposition, we obtain Theorem \ref{T:main1} (iv) as follows:
\begin{proof}[proof of Theorem \ref{T:main1} (iv)]
By Proposition \ref{P:red1} (ii), we can assume $\tau\in \mathrm{Irr_{sc}}\gll_n$.

For any $s\in\cc$, the map
$$(\wsh,v,v^\vee)\mapsto Z(\wsh,s,\langle \tau(\cdot)v,v^\vee\rangle)/L(\pi;s,\tau), \ \wsh\in \mathcal W^\psi_{\mathrm{Sh}}(\pi_n), \ v\in\tau,\  v^\vee\in\tau^\vee$$
is well-defined by Corollary \ref{C:genthm} and its linear extension on $\mathcal W^\psi_{\mathrm{Sh}}(\pi_n)\otimes\tau\otimes\tau^\vee$  defines a nonzero element of 
\begin{align*}
\Ho_{\diag(\gll_n,\gll_n)^{w_n}}(\pi_n\otimes(\tau|\cdot|^{s-\frac{1}{2}}\boxtimes\tau^\vee \chi_\pi^{-1} |\cdot|^{-s+\frac{1}{2}}),\cc)
&\simeq\Ho_{\diag(\gll_n,\gll_n)^{w_n}}(\pi_n,\tau^\vee|\cdot|^{-s+\frac{1}{2}}\boxtimes\tau\chi_\pi|\cdot|^{s-\frac{1}{2}})\\
&\simeq\Ho_{\gll_n\times\gll_n}(\pi_n,\tau^\vee|\cdot|^{-s+\frac{1}{2}}\boxtimes\tau\chi_\pi|\cdot|^{s-\frac{1}{2}}).
\end{align*}
On the other hand, it is easy to check that the linear extension of 
$$(\wsh,v,v^\vee)\mapsto Z(\widetilde\wsh,1-s,\langle v,\tau^\vee(\cdot)v^\vee\rangle)/L(\pi^\vee;1-s,\tau^\vee), \ \wsh\in \mathcal W^\psi_{\mathrm{Sh}}(\pi_n),\  v\in\tau,\  v^\vee\in\tau^\vee$$
 on $\mathcal W^\psi_{\mathrm{Sh}}(\pi_n)\otimes\tau\otimes\tau^\vee$  defines a nonzero element of the same space.
 By Proposition \ref{P:uniq}, these two maps coincide up to a constant.
 Thus, there is a function $\epsilon$ on $\cc$ such that
 $$Z(\widetilde\wsh,1-s,\langle v,\tau^\vee(\cdot)v^\vee\rangle)/L(\pi^\vee;1-s,\tau^\vee)=\epsilon(s)Z(\wsh,s,\langle \tau(\cdot)v,v^\vee\rangle)/L(\pi;s,\tau)$$
 for any $\wsh\in \mathcal W^\psi_{\mathrm{Sh}}(\pi_n),\  v\in\tau,\ v^\vee\in\tau^\vee$.
 By Theorem \ref{T:main1}(ii), we obtain 
 $$\gamma(s)=L(\pi^\vee;1-s,\tau^\vee)\epsilon(s)/L(\pi;s,\tau)\in\cc(q^{-s})$$
 as required.
\end{proof}

We introduce some additional notations.

Let $k,l$ be positive integers such that $k\geq l$.
Then, we often identify $\gll_{k-l}$ with $\diag(\gll_{k-l},1_{l})\subset \gll_k$.
Define
$$D_{l}^{k}=
\begin{pmatrix}
\gll_{k-l}&*\\
&1_l
\end{pmatrix}=
\gll_{k-l}
U_{(k-l,l)}\subset \gll_k$$
and 
$$D^{k}_{l}\rtimes D^l_m=
\begin{pmatrix}
\gll_{k-l}&*&*\\
&\gll_{l-m}&*\\
&&1_{m}
\end{pmatrix}=
D^{k}_{l}
\begin{pmatrix}
1_{k-l}&\\
&D^l_m
\end{pmatrix}
\subset D_{m}^{k}$$
 for $m\in\zz_{>0}$ such that $l\geq m$.
Moreover, for representations $\mu$ and $\sigma$ of $\gll_{k-l}$ and $D_{m}^l$, respectively, we denote 
$$\ind^{D_{m}^{k}}_{D^{k}_{l}\rtimes D_{m}^{l}}\mu\boxtimes \sigma$$
 by $\mu\rtimes\sigma$, where $\mu\boxtimes \sigma$ is regarded as a representation of $D^{k}_{l}\rtimes D_{m}^{l}$ by
$$U_{(k-l,l)}\backslash D^{k}_{l}\rtimes D_{m}^{l}\simeq \gll_{k-l}\times D_{m}^{l}.$$
We note that $(\mu\rtimes\sigma)|_{\gll_{k-m}}=\mu*\sigma|_{\gll_{l-m}}.$

Let $i$ be a positive integer such that $i\leq n$.
We define
$$N_i=\left\{
\begin{pmatrix}
X&Y\\
&1_i
\end{pmatrix}\middle | \ X\in U_{(1^i)}, Y\in\mathrm M_i(F): \text{upper triangular}
\right\}
\subset D_i^{2i}$$
and a character $\Psi_{N_i}$ of $N_i$ by
$$\Psi_{N_i}\left(
\begin{pmatrix}
X&Y\\
&1_i
\end{pmatrix}
\right)=\psi (\mathrm{tr}Y)$$
and put
$$I_i=\cind_{N_i}^{D_i^{2i}}\Psi_{N_i}%, \ J_i=J_i(\pi,n)=\pi_{n-i}|\cdot|^{\frac{i}{2}}\rtimes I_i
.$$

\subsection{The Kirillov-Shalika model}\label{SS:fl}
We start with the following lemma:
\begin{lemm}
\begin{enumerate}%
We regard $\psi\circ{\rm tr}$ as a character of $U_{(n,n)}$ by $U_{(n,n)}\simeq\mathrm M_n(F)$.
\renewcommand{\labelenumi}{(\roman{enumi})}
\item Any nonzero subrepresentation of $\ind^{D_n^{2n}}_{U_{(n,n)}}\psi\circ{\rm tr}$ contains \mbox{$\cind^{D_n^{2n}}_{U_{(n,n)}}\psi\circ{\rm tr}$}.
In particular, $\cind^{D_n^{2n}}_{U_{(n,n)}}\psi\circ{\rm tr}$ is the unique irreducible subrepresentation of $\ind^{D_n^{2n}}_{U_{(n,n)}}\psi\circ{\rm tr}$.
\item We have $I_n\simeq \cind^{D_n^{2n}}_{U_{(n,n)}}\psi\circ{\rm tr}$.
\end{enumerate}
\end{lemm}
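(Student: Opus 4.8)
The plan is to use throughout that $D_n^{2n}=\gll_n\ltimes U_{(n,n)}$, with $\gll_n=\diag(\gll_n,1_n)$, $U_{(n,n)}\simeq\mathrm M_n(F)$ abelian, and $\gll_n$ acting on it by left multiplication; under this action $\psi\circ\mathrm{tr}$ has trivial stabiliser in $\gll_n$, its $\gll_n$-orbit in $\widehat{\mathrm M_n(F)}$ being the open dense set of nondegenerate characters $Y\mapsto\psi(\mathrm{tr}(gY))$.

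For (i) I would first realise $\cind^{D_n^{2n}}_{U_{(n,n)}}\psi\circ\mathrm{tr}$ as $C_c^\infty(\gll_n)$, via $f\mapsto(g\mapsto f(\diag(g,1_n)))$; this is an isomorphism under which $\gll_n$ acts by right translation and $\left(\begin{smallmatrix}1_n&X\\&1_n\end{smallmatrix}\right)$ acts by $\phi\mapsto(g\mapsto\psi(\mathrm{tr}(gX))\phi(g))$, and $\ind^{D_n^{2n}}_{U_{(n,n)}}\psi\circ\mathrm{tr}$ is realised by the same formulas on a larger space of (not necessarily compactly supported) functions on $\gll_n$. The crucial point is that for any open compact $\Omega\subset\mathrm M_n(F)$, Fourier inversion on $\mathrm M_n(F)$ expresses $1_\Omega$ as a finite sum $\sum_jc_j\psi(\mathrm{tr}(\cdot\,X_j))$, so multiplication by $1_\Omega$ equals the action of $\sum_jc_j\left(\begin{smallmatrix}1_n&X_j\\&1_n\end{smallmatrix}\right)$ and hence preserves any subrepresentation. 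Given $0\neq f$ in a subrepresentation $\pi$ of $\ind^{D_n^{2n}}_{U_{(n,n)}}\psi\circ\mathrm{tr}$, pick $g_0$ with $f(g_0)\neq0$ and an open compact $\Omega\subset\gll_n$ containing $g_0$ on which $f$ is constant; then $1_\Omega\cdot f=f(g_0)1_\Omega$ is a nonzero element of $\pi$ lying in $C_c^\infty(\gll_n)=\cind^{D_n^{2n}}_{U_{(n,n)}}\psi\circ\mathrm{tr}$. The same localisation argument shows $\cind^{D_n^{2n}}_{U_{(n,n)}}\psi\circ\mathrm{tr}$ is irreducible (a nonzero submodule contains some $1_\Omega$, and right $\gll_n$-translates of these, together with the multiplication operators, span $C_c^\infty(\gll_n)$), and both assertions of (i) follow.

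For (ii) I would compute $I_n|_{U_{(n,n)}}$ and feed it into the Mackey--Clifford theory of the semidirect product $D_n^{2n}=U_{(n,n)}\rtimes\gll_n$. Decompose $N_n=U_{(1^n)}\ltimes U'$ with $U'=N_n\cap U_{(n,n)}$ the upper triangular part of $U_{(n,n)}$; choosing representatives $\left(\begin{smallmatrix}h_0&V\\&1_n\end{smallmatrix}\right)$ for $N_n\backslash D_n^{2n}$, with $h_0$ running over a section of $U_{(1^n)}\backslash\gll_n$ and $V\in\bar{\mathfrak n}_n$, and computing the right $U_{(n,n)}$-action, one finds that $U_{(n,n)}$ translates the $\bar{\mathfrak n}_n$-coordinate and twists by a character; a Fourier transform in that coordinate then gives $I_n|_{U_{(n,n)}}\simeq C_c^\infty\big((U_{(1^n)}\backslash\gll_n)\times\mathfrak n_n\big)$ with $U_{(n,n)}$ acting at the point $(U_{(1^n)}h_0,C)$, $C\in\mathfrak n_n$, through the nondegenerate character $Y\mapsto\psi(\mathrm{tr}((1_n+C)h_0Y))$. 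Thus $U_{(n,n)}$ acts on $I_n$ only through characters in the open free orbit of $\psi\circ\mathrm{tr}$, and since $(U_{(1^n)}h_0,C)\mapsto(1_n+C)h_0$ is injective, $\psi\circ\mathrm{tr}$ occurs in $I_n$ with multiplicity one. By the structure theory of such representations, $I_n$ is a direct sum of copies of the unique irreducible one --- namely $\cind^{D_n^{2n}}_{U_{(n,n)}}\psi\circ\mathrm{tr}$ --- the number of copies being this multiplicity, i.e.\ one; hence $I_n\simeq\cind^{D_n^{2n}}_{U_{(n,n)}}\psi\circ\mathrm{tr}$. (Alternatively one can avoid the structure theory: the localisation argument of (i), applied in the model just obtained, shows $I_n$ is irreducible, and then Frobenius reciprocity gives a nonzero, hence injective, map $I_n\inj\ind^{D_n^{2n}}_{U_{(n,n)}}\psi\circ\mathrm{tr}$ whose image is forced to be $\cind^{D_n^{2n}}_{U_{(n,n)}}\psi\circ\mathrm{tr}$ by (i); or one can exhibit the isomorphism directly through the Bruhat-type substitution $g=(1_n+C)h_0$, which carries the above model to that of (i).)

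The formal skeleton --- the localisation argument, and the passage from ``clean with one-dimensional fibre'' to the isomorphism --- is routine. The real work, and the main potential obstacle, is the bookkeeping in the middle of (ii): carefully tracking the right $U_{(n,n)}$-action (and, for the explicit-isomorphism variant, the $\gll_n$-action) through the chosen representatives for $N_n\backslash D_n^{2n}$, through the $\Psi_{N_n}$-cocycle that appears, and through the Fourier transform, and thereby identifying the resulting family of characters as exactly the nondegenerate ones with $\psi\circ\mathrm{tr}$ appearing once. Here the precise shape of $N_n$ and $\Psi_{N_n}$ (that $\Psi_{N_n}$ is trivial on $U_{(1^n)}$ and agrees with $\psi\circ\mathrm{tr}$ on $U'$) is used, along with a little care concerning the fibration $\gll_n\to U_{(1^n)}\backslash\gll_n$.
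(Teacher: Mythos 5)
Your argument is correct in substance, but it is a genuinely different route from the paper's: the paper offers no proof at all, remarking only that (i) and (ii) are special cases of the more general \cite[Lemmas 3.12 and 3.14]{lapid_mao_2020}, which are stated for arbitrary $k$. What you do instead is exploit the structure special to $k=2$, namely $D_n^{2n}=\gll_n\ltimes U_{(n,n)}$ with $U_{(n,n)}\simeq\mathrm M_n(F)$ abelian and $\psi\circ\mathrm{tr}$ lying in a free open $\gll_n$-orbit of characters; this buys a short, self-contained and quite transparent proof of exactly the case the paper needs, at the price of not recovering the general statements being cited. The bookkeeping you defer does come out as predicted: with representatives $\diag(h_0,1_n)u_V$, $V\in\bar{\mathfrak n}_n$, right translation by $u_X$ multiplies by $\psi(\mathrm{tr}(h_0X))$ and shifts $V$ by the strictly lower part of $h_0X$, and since $\mathrm{tr}(CM)$ for $C\in\mathfrak n_n$ depends only on the strictly lower entries of $M$, the Fourier transform in $V$ yields precisely the characters $X\mapsto\psi(\mathrm{tr}((1_n\pm C)h_0X))$ (sign depending on the transform convention); moreover $(U_{(1^n)}h_0,C)\mapsto(1_n\pm C)h_0$ is in fact a bijection onto $\gll_n$, so $I_n|_{U_{(n,n)}}$ becomes the same $C_c^\infty(\gll_n)$-with-pointwise-character model as in (i). Two points should be tightened. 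First, $1_\Omega$ is not a finite sum of characters on all of $\mathrm M_n(F)$, only on a compact open subgroup containing $\Omega$; this suffices because smoothness under $U_{(n,n)}$ forces every vector of $\ind^{D_n^{2n}}_{U_{(n,n)}}\psi\circ{\rm tr}$ to have support bounded in $\mathrm M_n(F)$ (equivalently, act by the element of the Hecke algebra of $U_{(n,n)}$ whose Fourier transform is $1_\Omega$). Second, the ``structure theory'' you invoke in (ii) is Mackey theory for smooth representations of $H\ltimes A$ supported on a locally closed free orbit (Bernstein--Zelevinsky-style localization), which is standard but should be cited or replaced by your alternative closing argument (irreducibility of $I_n$ via the same localization, Frobenius reciprocity into the full induction, then (i)); that alternative does close, once one notes that the $\gll_n$-action permutes the spectral fibers by right translation, as follows from $\diag(h,1_n)u_X\diag(h,1_n)^{-1}=u_{hX}$.
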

(i) and (ii) are special cases of more general statements \cite[Lemma 3.12]{lapid_mao_2020} and \cite[Lemma 3.14]{lapid_mao_2020}, respectively.

Let $\chi$ be a character of $F^\times$.
Then, we can extend the action of $D_n^{2n}$ on $\ind^{D_n^{2n}}_{U_{(n,n)}}\psi\circ{\rm tr}$ to $P_{(n,n)}$ by 
$$(\diag(1_n,g))f:=\chi(\det g)f\left(\diag(g^{-1},1_n)\cdot\right)$$
 for any $g\in\gll_n$.
We denote the extended representation by $\tilde I_n^{\chi}$ and the $P_{(n,n)}$-submodule $\cind^{D_n^{2n}}_{U_{(n,n)}}\psi\circ{\rm tr}$ of $\tilde I_n^{\chi}$ by $I_n^\chi$.

Let $\mathcal W'^\psi_{\mathrm{Sh}}(\pi_n)$ be the `ordinal' Shalika model of $\pi_n$ i.e., a (unique) subspace of 
$\ind^{\gll_{2n}}_{U_{(n,n)}}\psi\circ{\rm tr}$ which realizes $\pi_n$.
We note that the isomorphism from $\mathcal W^\psi_{\mathrm{Sh}}(\pi_n)$ to $\mathcal W'^\psi_{\mathrm{Sh}}(\pi_n)$ is given by $\wsh\mapsto \wsh(w_n\cdot)$ for $\wsh\in \mathcal W^\psi_{\mathrm{Sh}}(\pi_n)$.

We define the Kirillov-Shalika model $\mathcal K_\psi(\pi_n)$ of $\pi_n$ by
$$\mathcal K_\psi(\pi_n)=\{\wsh'|_{D_n^{2n}} \ | \ \wsh'\in \mathcal W^\psi_{\mathrm{Sh}}(\pi_n)\}.$$
Then, the kernel of $\mathcal W'^\psi_{\mathrm{Sh}}(\pi_n)\surj  \mathcal K_\psi(\pi_n)$ is a $P_{(n,n)}$-module.
Therefore, as a space with ${P_{(n,n)}}$-action induced by $\pi_n|_{P_{(n,n)}}$, $\mathcal K_\psi(\pi_n)$ is a subrepresentation of $\tilde I_n^{\chi_{\pi}}$.

If $\pi$ is approximately tempered, then $\mathcal W'^\psi_{\mathrm{Sh}}(\pi_n)\surj  \mathcal K_\psi(\pi_n)$ is bijective (\cite[Corollary 4.4]{lapid_mao_2020}).
Thus, we have the following proposition.
\begin{prop}\label{P:sh}
Assume $\pi$ is approximately tempered.
Then, $\pi_n$ has a unique irreducible $D_n^{2n}$-submodule.
Moreover, this submodule is a $P_{(n,n)}$-submodule and is  isomorphic to $I_n^{\chi_{\pi}}$ as a $P_{(n,n)}$-representation.
\end{prop}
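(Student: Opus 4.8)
The plan is to read off both assertions from the two ingredients assembled just above. The first is the Lapid--Mao bijectivity: when $\pi$ is approximately tempered, the $P_{(n,n)}$-equivariant surjection $\mathcal W'^\psi_{\mathrm{Sh}}(\pi_n)\surj\mathcal K_\psi(\pi_n)$ is an isomorphism, so, identifying $\pi_n$ with its Shalika model, we may regard $\pi_n|_{P_{(n,n)}}$ as the $P_{(n,n)}$-submodule $\mathcal K_\psi(\pi_n)$ of $\tilde I_n^{\chi_\pi}$; in particular $\pi_n|_{D_n^{2n}}$ is identified with $\mathcal K_\psi(\pi_n)$ viewed inside $\ind^{D_n^{2n}}_{U_{(n,n)}}\psi\circ\mathrm{tr}$. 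The second ingredient is the Lemma above: $I_n\simeq\cind^{D_n^{2n}}_{U_{(n,n)}}\psi\circ\mathrm{tr}$ is irreducible, it is contained in every nonzero $D_n^{2n}$-submodule of $\ind^{D_n^{2n}}_{U_{(n,n)}}\psi\circ\mathrm{tr}$, and it is the unique irreducible such submodule.

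First I would establish uniqueness of the irreducible $D_n^{2n}$-submodule. Since $\mathcal K_\psi(\pi_n)\simeq\pi_n$ is nonzero, it is a nonzero $D_n^{2n}$-submodule of $\ind^{D_n^{2n}}_{U_{(n,n)}}\psi\circ\mathrm{tr}$, hence contains $I_n$ by the Lemma; thus an irreducible $D_n^{2n}$-submodule exists. Conversely, if $V\subset\mathcal K_\psi(\pi_n)$ is any irreducible $D_n^{2n}$-submodule, then $V$ is in particular a nonzero $D_n^{2n}$-submodule of $\ind^{D_n^{2n}}_{U_{(n,n)}}\psi\circ\mathrm{tr}$, so $I_n\subset V$ by the Lemma, whence $V=I_n$ by irreducibility. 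Hence $I_n$ is the unique irreducible $D_n^{2n}$-submodule of $\pi_n$.

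Next I would promote this to a $P_{(n,n)}$-statement. A direct matrix computation shows that $D_n^{2n}$ is normal in $P_{(n,n)}$, i.e. $p\,D_n^{2n}\,p^{-1}=D_n^{2n}$ for all $p\in P_{(n,n)}$. Hence for $p\in P_{(n,n)}$ the subspace $p\cdot I_n\subset\mathcal K_\psi(\pi_n)$ is again a $D_n^{2n}$-submodule, and it is irreducible, being the $\mathrm{Ad}(p)$-twist of the irreducible $I_n$. By the uniqueness just proved, $p\cdot I_n=I_n$, so $I_n$ is $P_{(n,n)}$-stable. Finally, the subspace of $\tilde I_n^{\chi_\pi}$ underlying $I_n$ is exactly the space of compactly supported functions, which is the $P_{(n,n)}$-submodule $I_n^{\chi_\pi}$ by definition; therefore the unique irreducible $D_n^{2n}$-submodule of $\pi_n$ is a $P_{(n,n)}$-submodule, isomorphic to $I_n^{\chi_\pi}$ as a $P_{(n,n)}$-representation.

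I do not anticipate a genuine obstacle here, since the substance lies in the cited bijectivity and in the Lemma. The only point needing care is the bookkeeping among the three actions in play, namely the right-translation $D_n^{2n}$-action on $\ind^{D_n^{2n}}_{U_{(n,n)}}\psi\circ\mathrm{tr}$, the extended $P_{(n,n)}$-action on $\tilde I_n^{\chi_\pi}$, and the $P_{(n,n)}$-action inherited from $\pi_n$; their compatibility is exactly what the discussion preceding the proposition records, so this amounts only to being explicit about the identifications.
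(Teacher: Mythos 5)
Your proposal is correct and follows essentially the same route as the paper, which deduces the proposition directly from the bijectivity of $\mathcal W'^\psi_{\mathrm{Sh}}(\pi_n)\twoheadrightarrow\mathcal K_\psi(\pi_n)$ together with the preceding lemma on $\ind^{D_n^{2n}}_{U_{(n,n)}}\psi\circ{\rm tr}$ and its unique irreducible submodule $\cind^{D_n^{2n}}_{U_{(n,n)}}\psi\circ{\rm tr}\simeq I_n$. Your extra step via normality of $D_n^{2n}$ in $P_{(n,n)}$ is a harmless (valid) detour, since the compactly supported subspace is already $P_{(n,n)}$-stable by the definition of $I_n^{\chi_\pi}$, which you also note.
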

\subsection{Special cases}\label{SS:sc}
We consider  Proposition \ref{P:uniq} for some special cases.

\begin{lemm}\label{L:nsc}
Assume $\pi$ is not supercuspidal.
Then, Proposition \ref{P:uniq} holds.
\end{lemm}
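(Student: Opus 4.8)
The statement to prove is Lemma \ref{L:nsc}: that Proposition \ref{P:uniq} holds when $\pi$ is not supercuspidal. The idea is to reduce the non-supercuspidal case to smaller Speh representations via the realization $\pi_n \subset \mathrm{Sp}(\pi,n_1)|\cdot|^{-n_2/2} \times \mathrm{Sp}(\pi,n_2)|\cdot|^{n_1/2}$ (Remark \ref{R:sp}), or rather its Shalika-model refinement in Remark \ref{R:div}, together with the filtration of a parabolically induced representation restricted to the diagonal $\gll_n$ (a Mackey-type / geometric lemma argument). Since $\pi \in \mathrm{Irr_{gen}}\gll_2$ is not supercuspidal, $\pi$ is either a full induced representation $\chi_1 \times \chi_2$ or a (twist of a) Steinberg $\mathrm{St}(\chi,2)$; in the first of these, $\pi_n \simeq \chi_1(\det) \times \chi_2(\det)$ is itself a degenerate principal series of $\gll_{2n}$ induced from the $(n,n)$-parabolic, which brings us directly into the realm of the doubling method computations referenced in the introduction.

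First I would handle the case $\pi = \chi_1 \times \chi_2$ with $\chi_1\chi_2^{-1} \neq |\cdot|^{\pm 1}$: here $\pi_n \simeq \chi_1(\det_{\gll_n}) \times \chi_2(\det_{\gll_n})$, a representation induced from the Siegel parabolic $P_{(n,n)}$ of $\gll_{2n}$. Restricting to $\diag(\gll_n,\gll_n)$ and applying the geometric lemma for the double coset space $P_{(n,n)} \backslash \gll_{2n} / \diag(\gll_n,\gll_n)$, one gets a finite filtration of $\pi_n|_{\gll_n\times\gll_n}$ whose subquotients are induced from the (finitely many) orbits; the open orbit contributes $\chi_1(\det)|\cdot|^{\alpha} \boxtimes \chi_2(\det)|\cdot|^{\beta}$-type pieces up to a twist by the modulus, and each $\Ho_{\gll_n\times\gll_n}$-space against $\tau\boxtimes\tau'$ is then computed by Frobenius reciprocity and the disjointness of supercuspidal supports; exactness of the relevant functors plus the fact that $\tau$ is supercuspidal forces all but (at most) one subquotient to contribute zero, and the surviving one contributes at most $1$ precisely when $\tau' \simeq \tau^\vee\chi_\pi$ (the central-character constraint $\chi_\pi = \chi_1\chi_2$ pins down which twist survives). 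The Steinberg case $\pi = \mathrm{St}(\chi,2)$ is similar but $\pi_n$ is now the unique irreducible subrepresentation of $\mathrm{Sp}(\chi,n)|\cdot|^{\pm\cdot}\times\cdots$; here I would instead use the embedding $\pi_n \subset \pi_{n-1}|\cdot|^{-1/2} \times \pi|\cdot|^{(n-1)/2}$ from Proposition \ref{P:sp}, induct on $n$, and again run the geometric lemma on the restriction to the diagonal, using the inductive hypothesis for $\pi_{n-1}$ together with the $n=1$ and (genericity-based) $n=2$ analyses and the branching of $\pi = \mathrm{St}(\chi,2)$ of $\gll_2$ restricted to $\gll_1 \times \gll_1$.

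More uniformly, I expect the cleanest route is: fix a decomposition $n = n_1 + n_2$ with $n_1, n_2 < n$ (possible as $n \geq 1$; if $n=1$ we are in the already-settled base case of Lemma \ref{L:12}/the remark on $n=1$), use Remark \ref{R:sp} to realize $\pi_n \subset \pi_{n_1}|\cdot|^{-n_2/2} \times \pi_{n_2}|\cdot|^{n_1/2}$ inside $\gll_{2n}$ as induced from $P_{(2n_1,2n_2)}$, restrict to $\diag(\gll_n,\gll_n)$, and decompose via the double cosets $P_{(2n_1,2n_2)} \backslash \gll_{2n} / \diag(\gll_n,\gll_n)$. Each subquotient of this filtration is, by Frobenius reciprocity, governed by $\Ho$-spaces of the form $\Ho_{\gll_{n_i}\times\gll_{n_i}}(\pi_{n_i}, (\text{pieces of }\tau)\boxtimes(\text{pieces of }\tau'))$ together with $\gll_m\times\gll_m$-Hom spaces for the "off-diagonal" interaction of the two blocks; by the inductive hypothesis these smaller Speh-Hom spaces vanish unless the relevant twist condition holds and are at most one-dimensional otherwise, and since $\tau$ is supercuspidal the cuspidal-support bookkeeping kills all the cross terms and all but one of the geometric-lemma layers. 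Summing the (at most one surviving, at most one-dimensional) contribution gives the bound; tracking central characters through the induction identifies the surviving twist as $\tau^\vee\chi_\pi$.

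**The main obstacle.** The delicate point is the bookkeeping in the geometric lemma: the double-coset decomposition $P \backslash \gll_{2n} / \diag(\gll_n,\gll_n)$ has several orbits (indexed by ranks of intersections of the "vertical" and "diagonal" flags), and for each one I must (a) compute the modulus-character twist that appears, and (b) verify using the supercuspidality of $\tau$ and the explicit cuspidal support of $\pi_n$ (namely $\{\rho_i|\cdot|^j\}$ with the $\rho_i$ the cuspidal support of $\pi$) that the corresponding $\Ho$-space vanishes — this is where non-supercuspidality of $\pi$ is actually used, since it means the cuspidal support of $\pi_n$ consists of \emph{characters} (or $2$-dimensional supercuspidals in the Steinberg-of-$\gll_2$-with-$\rho$-cuspidal case), making the matching against the cuspidal support of $\tau \boxtimes \tau^\vee\chi_\pi$ rigid. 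I expect the worst case to be the "middle" orbits where the interaction is between the two Speh blocks $\pi_{n_1}$ and $\pi_{n_2}$, requiring a careful second-kind-Hom computation; the archimedean-style continuity/exactness issues do not arise here since everything is smooth over a $p$-adic field, so the real work is purely combinatorial-representation-theoretic.
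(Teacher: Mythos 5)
Your first branch (the case $\pi=\chi_1\times\chi_2$) is essentially the paper's argument: one restricts the degenerate principal series $\mathbf 1_{\gll_n}\chi_1\times\mathbf 1_{\gll_n}\chi_2$ to $\diag(\gll_n,\gll_n)$, uses the standard doubling-method filtration of this restriction (the paper quotes \cite[Lemma 2.5]{ATOBE2018281} and argues as in \cite[Theorem 4.3]{HKS}), and observes that supercuspidality of $\tau$ kills every layer except the open-orbit one, which is of the form $\cind^{\gll_n\times\gll_n}_{\Delta\gll_n}(\text{twist})$ and therefore contributes multiplicity at most one, exactly when $\tau'\simeq\tau^\vee\chi_\pi$. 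One caveat even here: $\pi_n\simeq\chi_1(\det)\times\chi_2(\det)$ only when the two segments are unlinked (automatic if $\pi$ is approximately tempered, but Proposition \ref{P:uniq} does not assume this); what is true in general, and is all that is needed, is that $\pi_n$ is a \emph{quotient} of $\mathbf 1_{\gll_n}\chi_1\times\mathbf 1_{\gll_n}\chi_2$ in a suitable ordering, so that $\Ho_{\gll_n\times\gll_n}(\pi_n,\tau\boxtimes\tau')$ injects into the Hom space out of the degenerate principal series.

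The genuine gap is in your treatment of the Steinberg case and in your ``uniform route.'' There you propose to bound $\Ho_{\gll_n\times\gll_n}(\pi_n,\tau\boxtimes\tau')$ by embedding $\pi_n$ as a \emph{subrepresentation} of $\pi_{n-1}|\cdot|^{-1/2}\times\pi|\cdot|^{(n-1)/2}$ (or $\pi_{n_1}|\cdot|^{-n_2/2}\times\pi_{n_2}|\cdot|^{n_1/2}$) and then running Mackey theory on the ambient induced representation. This does not give an upper bound: for a subrepresentation $\pi_n\hookrightarrow\Pi$ the restriction map $\Ho(\Pi,\sigma)\to\Ho(\pi_n,\sigma)$ need be neither injective nor surjective, and a filtration of $\Pi|_{\gll_n\times\gll_n}$ only filters $\pi_n|_{\gll_n\times\gll_n}$ by \emph{subobjects} of its layers, whose outgoing Homs are not controlled by those of the layers. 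To make your strategy work you must realize $\pi_n$ as a quotient (reverse the order of induction), and once you do, the simplest choice is the one the paper makes and which removes the need for any case split or induction on $n$: since $\pi$ is generic and not supercuspidal, its cuspidal support consists of two characters $\chi_1,\chi_2$ of $F^\times$ in \emph{both} the irreducible-principal-series and the Steinberg cases, and $\pi_n$ is a quotient of the single degenerate principal series $\mathbf 1_{\gll_n}\chi_1\times\mathbf 1_{\gll_n}\chi_2$; hence the computation of your first branch finishes the whole lemma uniformly, with no appeal to Proposition \ref{P:sp} or Remark \ref{R:sp} (which would in any case import the approximately tempered hypothesis, and whose off-diagonal orbit analysis for $P_{(2n_1,2n_2)}$ you have not actually carried out).
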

\begin{proof}
Since $\pi$ is not supercuspidal, $\pi_n$ is a quotient of the degenerate principal series $\mathbf 1_{\gll_n}\chi_1\times\mathbf 1_{\gll_n}\chi_2$ for some $\chi_1,\chi_2\in\mathrm{Irr}\gll_1$. 
Then we have
$$\dim_\cc{\rm Hom}_{\gll_n\times\gll_{n}}(\pi_{n},\tau\boxtimes\tau')\leq\dim_\cc{\rm Hom}_{\gll_n\times\gll_{n}}(\mathbf 1_{\gll_n}\chi_1\times\mathbf 1_{\gll_n}\chi_2,\tau\boxtimes\tau').$$
Similar to \cite[Theorem 4.3]{HKS}, the equality
$$\dim_\cc{\rm Hom}_{\gll_n\times\gll_{n}}(\mathbf 1_{\gll_n}\chi_1\times\mathbf 1_{\gll_n}\chi_2,\tau\boxtimes\tau')=\begin{cases}
1 \ \text{ if } \tau'=\tau^\vee\chi_\pi;\\
0 \ \text{ otherwise}
\end{cases}$$
is easy to verify by the filtration of $\mathbf 1_{\gll_n}\chi_1\times\mathbf 1_{\gll_n}\chi_2$ in \cite[Lemma 2.5]{ATOBE2018281}.
\end{proof}

\begin{lemm}\label{L:n12}
Assume $\pi$ is supercuspidal.
Then, Proposition \ref{P:uniq} holds if $n=1,2$.
\end{lemm}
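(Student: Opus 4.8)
\emph{Proof sketch.}
The plan is to realise $\pi_n$ through the Kirillov--Shalika model of \S\ref{SS:fl}. Since a supercuspidal $\pi$ is approximately tempered, Proposition \ref{P:sh} applies and gives an exact sequence of $P_{(n,n)}$-modules
\[
0\to I_n^{\chi_\pi}\to \pi_n|_{P_{(n,n)}}\to Q\to 0,\qquad Q\subseteq \tilde I_n^{\chi_\pi}/I_n^{\chi_\pi}.
\]
For $n=1$ one has $Q=0$: here $D_1^2$ is the mirabolic subgroup of $\gll_2$, $\gll_1\times\gll_1=\diag(\gll_1,\gll_1)$ is the diagonal torus, and the Kirillov model of a supercuspidal representation of $\gll_2$ is compactly supported, so $\pi|_{D_1^2}=\mathcal K_\psi(\pi)=I_1^{\chi_\pi}$; thus the case $n=1$ reduces to Step A below. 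So assume $n=2$. Restricting the sequence to $M_{(2,2)}=\gll_2\times\gll_2=\diag(\gll_2,\gll_2)$ and applying $\mathrm{Hom}_{\gll_2\times\gll_2}(-,\tau\boxtimes\tau')$, one gets $\dim_\cc\mathrm{Hom}_{\gll_2\times\gll_2}(\pi_2,\tau\boxtimes\tau')\leq\dim_\cc\mathrm{Hom}_{\gll_2\times\gll_2}(Q,\tau\boxtimes\tau')+\dim_\cc\mathrm{Hom}_{\gll_2\times\gll_2}(I_2^{\chi_\pi},\tau\boxtimes\tau')$, so it suffices to compute the Shalika term and to show the complementary term vanishes.

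\emph{Step A (the Shalika term).} Unwinding $I_n^{\chi_\pi}=\cind^{D_n^{2n}}_{U_{(n,n)}}(\psi\circ\mathrm{tr})$ as a space of functions on $D_n^{2n}/U_{(n,n)}\simeq\gll_n$ and keeping track of the extension of the action to $P_{(n,n)}$ by $\chi_\pi$, one identifies $I_n^{\chi_\pi}|_{\gll_n\times\gll_n}$ with $C_c^\infty(\gll_n)$ on which $\diag(g_1,g_2)$ acts by $\phi\mapsto\chi_\pi(\det g_2)\,\phi(g_2^{-1}\,\cdot\,g_1)$; since $\gll_n\simeq(\gll_n\times\gll_n)/\diag(\gll_n,\gll_n)$ carries an invariant measure, this is $\cind^{\gll_n\times\gll_n}_{\diag(\gll_n,\gll_n)}(\chi_\pi\circ\det)$. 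Frobenius reciprocity for compact induction (no twist occurs as $\gll_n$ and $\diag(\gll_n,\gll_n)$ are unimodular), followed by the admissibility of $\tau'$ and Schur's lemma for $\tau$, gives
\[
\mathrm{Hom}_{\gll_n\times\gll_n}(I_n^{\chi_\pi},\tau\boxtimes\tau')\simeq \mathrm{Hom}_{\gll_n}(\chi_\pi\circ\det,\tau\otimes\tau')\simeq\mathrm{Hom}_{\gll_n}(\tau'^\vee\chi_\pi,\tau),
\]
which is one--dimensional if $\tau'\simeq\tau^\vee\chi_\pi$ and zero otherwise. This is precisely the conclusion of Proposition \ref{P:uniq} with $I_n^{\chi_\pi}$ in place of $\pi_n$, and it settles $n=1$.

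\emph{Step B (the complementary term).} By the Bernstein--Zelevinsky (``block derivative'') analysis of $\pi_n|_{D_n^{2n}}$ in \cite{lapid_mao_2020}, $Q$ admits a finite $P_{(2,2)}$-filtration whose graded pieces are, as $\gll_2\times\gll_2$-modules, (quotients of representations) compactly induced from a subgroup of $\gll_2\times\gll_2$ lying in $B\times\gll_2$, where $B$ is a Borel subgroup of $\gll_2$ — this reflects the fact that the relevant lower strata feed on data attached to $\gll_2^{(1)}:=\diag(\gll_2,1_2)$ through a proper parabolic of $\gll_2$. Dualising such a piece (compact induction from $B\times\gll_2$ is full induction, since the flag variety is compact) and applying Frobenius reciprocity in the first $\gll_2$-factor, $\mathrm{Hom}_{\gll_2\times\gll_2}$ of it into $\tau\boxtimes\tau'$ is controlled by the Jacquet module of $\tau^\vee$ along $B$, which vanishes because $\tau$ is supercuspidal. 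Hence $\mathrm{Hom}_{\gll_2\times\gll_2}(Q,\tau\boxtimes\tau')=0$, and combining with Step A gives $\dim_\cc\mathrm{Hom}_{\gll_2\times\gll_2}(\pi_2,\tau\boxtimes\tau')\leq\dim_\cc\mathrm{Hom}_{\gll_2\times\gll_2}(I_2^{\chi_\pi},\tau\boxtimes\tau')$, which is $\leq1$ and is $0$ unless $\tau'\simeq\tau^\vee\chi_\pi$, as required.

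The main obstacle is Step B: it requires a description of $\tilde I_2^{\chi_\pi}/I_2^{\chi_\pi}$ (equivalently, of the non-Shalika part of $\pi_2|_{D_2^4}$) as a $P_{(2,2)}$-module explicit enough to exhibit every graded piece as induced, in the first $\gll_2$-factor, from a proper parabolic, together with correct bookkeeping of the modulus and central-character twists introduced by the extension $D_2^4\subset P_{(2,2)}$. This is where the restriction $n\leq2$ matters: for $n\geq3$ the lower strata involve genuine Speh representations $\mathrm{Sp}(\pi,j)$ with $2\leq j\leq n-1$ rather than $\pi$ itself, so the analogous vanishing is not immediate, and the general case of Proposition \ref{P:uniq} is instead handled in \S\ref{SS:pf} by induction on $n$ with the present lemma as the base case. \hfill$\square$
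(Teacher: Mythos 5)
Your $n=1$ argument and Step A are essentially the computations the paper itself uses ($\{\Phi_W\}=\mathcal S(F^\times)$ for $n=1$, and $I_n^{\chi_\pi}|_{\gll_n\times\gll_n}\simeq\cind^{\gll_n\times\gll_n}_{\Delta\gll_n}\chi_\pi(\det)$ inside the proof of Proposition \ref{P:uniq}), and the reduction via the short exact sequence is legitimate since $J'_n\simeq I_n^{\chi_\pi}$ is a $P_{(n,n)}$-submodule by Proposition \ref{P:sh}. The genuine gap is Step B: its structural claim about $Q$ is false for supercuspidal $\pi$ and $n=2$. By Proposition \ref{P:zell} and Lemma \ref{L:tes} (Proposition \ref{P:fl} with $n=2$), the quotient $Q=\pi_2/J'_2$ has a single stratum $K_1=\pi|\cdot|^{1/2}\rtimes I_1|_{D^2_2}$; since $D^2_2$ is trivial, this stratum restricted to the first factor $\diag(\gll_2,1_2)$ is an (infinite) multiple of the supercuspidal representation $\pi|\cdot|^{1/2}$ itself — the $\gll_2$-block occurring here is the full group, not a proper parabolic, so nothing is ``compactly induced from a subgroup lying in $B\times\gll_2$'' and the Jacquet-module vanishing you invoke gives nothing. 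Indeed, when $\tau\simeq\pi|\cdot|^{1/2}$ one has $\Ho_{\gll_2}(Q,\tau)\neq0$, so the vanishing $\Ho_{\gll_2\times\gll_2}(Q,\tau\boxtimes\tau')=0$ cannot be obtained by any first-factor argument. This is exactly why the proof of Proposition \ref{P:uniq} in \S\ref{SS:pf} requires $n>2$ (the contradiction there uses $\tau\in\mathrm{Irr_{sc}}$ together with $n/2>1$), and it is the same exceptional case that \cite[Proposition 7.1]{lapid_mao_2020} must exclude. Your closing paragraph has the situation backwards: for $n\geq3$ the lower strata restrict, on the first factor, to proper parabolic inductions or to multiples of the non-supercuspidal $\pi_{n/2}|\cdot|^{n/4}$, so the vanishing does hold there; it is precisely $n=2$ (and only $n=2$) where it fails, which is the reason this lemma exists, and the paper does not prove the general case by induction on $n$ from it.

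For comparison, the paper's actual proof of the lemma is short: the case $n=1$ is the Kirillov-model fact just quoted, and for $n=2$ it cites \cite[Proposition 7.1]{lapid_mao_2020}, which gives vanishing for $\tau'\neq\tau^\vee\chi_\pi$ and multiplicity at most one for $\tau'=\tau^\vee\chi_\pi$ except when $\tau=\pi|\cdot|^{1/2}$; that excluded case is then handled by the symmetry $\Ho_{\gll_2\times\gll_2}(\pi_2,\tau\boxtimes\tau')\simeq\Ho_{\gll_2\times\gll_2}(\pi_2,\tau'\boxtimes\tau)$, noting $\tau^\vee\chi_\pi\simeq\pi|\cdot|^{-1/2}\neq\pi|\cdot|^{1/2}$, so the cited bound applies after swapping the factors. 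To rescue your strategy you would have to control $\Ho_{\gll_2\times\gll_2}(Q,\tau\boxtimes\tau')$ using the second $\gll_2$-factor as well (which is essentially the content of the Lapid--Mao result), or import such a swap argument; as written, Step B breaks down in the only delicate case.
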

\begin{proof}
If $n=1$, then it follows from $\{\Phi_{W} \ | \  W\in \mathcal W^\psi(\pi) \}=\mathcal S(F^\times)$.

Assume $n=2$.
 Then 
 $$\dim_\cc{\rm Hom}_{\gll_2\times\gll_{2}}(\pi_{2},\tau\boxtimes\tau')=0$$
 if $\tau'\neq \tau^\vee\chi_\pi$
and
$$\dim_\cc{\rm Hom}_{\gll_2\times\gll_{2}}(\pi_{2},\tau\boxtimes\tau^\vee\chi_\pi)\leq 1$$
unless $\tau=\pi|\cdot|^{\frac{1}{2}}$
by \cite[Proposition 7.1]{lapid_mao_2020}.
Since  ${\rm Hom}_{\gll_n\times\gll_{n}}(\pi_{n},\tau\boxtimes\tau')\simeq{\rm Hom}_{\gll_n\times\gll_{n}}(\pi_{n},\tau'\boxtimes\tau)$ in general, we have $\dim_\cc{\rm Hom}_{\gll_2\times\gll_{2}}(\pi_{2},\tau\boxtimes\tau^\vee\chi_\pi)\leq 1$ in general.
\end{proof}

\subsection{The filtration}\label{SS:fl}
In this subsection, we give a good filtration of $\pi_n$.
Assume that $\pi$ is supercuspidal here.

The following fact is essential:
\begin{prop}\label{P:zell}
For any $\pi'\in\mathrm{Irr_{sc}}\gll_2$, we have
$$\mathrm{Sp}(\pi',n)|_{D^{2n}_1}\simeq \mathrm{Sp}(\pi',n-1)|\cdot|^{1/2}\rtimes I_1.$$
\end{prop}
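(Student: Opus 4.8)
The plan is to recognise $D_1^{2n}=\gll_{2n-1}U_{(2n-1,1)}$ as the mirabolic subgroup $P_{2n}$ of $\gll_{2n}$ and to feed the representation $\mathrm{Sp}(\pi',n)$ into Bernstein--Zelevinsky theory. By the definition in \S\ref{SS:sp}, $\mathrm{Sp}(\pi',n)=\langle\Delta\rangle$ is the representation attached under the Zelevinsky classification to the single segment $\Delta=[\pi'|\cdot|^{-(n-1)/2},\pi'|\cdot|^{(n-1)/2}]$, a segment of length $n$ over the supercuspidals of $\gll_2$. Recall that the restriction of any finite length representation $\rho$ of $\gll_{2n}$ to $P_{2n}$ carries a finite filtration with successive subquotients $(\Phi^+)^{k-1}\Psi^+(\rho^{(k)})$, $k=1,\dots,2n$, where $\rho^{(k)}$ is the $k$-th Bernstein--Zelevinsky derivative and $\Phi^+,\Psi^+$ are the usual functors. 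So the first task is to compute the derivatives of $\mathrm{Sp}(\pi',n)$.

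Here I would use that a Zelevinsky (socle) segment representation over supercuspidals of $\gll_2$ has level $2$: the only nonvanishing derivatives of $\langle\Delta\rangle$ are $\langle\Delta\rangle^{(0)}=\langle\Delta\rangle$ (which does not occur in the restriction to $P_{2n}$) and $\langle\Delta\rangle^{(2)}$. Vanishing of $\rho^{(k)}$ for odd $k$ is forced by the supercuspidal support of $\mathrm{Sp}(\pi',n)$ being concentrated in supercuspidals of $\gll_2$; vanishing for $k=4,6,\dots$ is the level statement, equivalently the fact that the coarser Jacquet modules of $\langle\Delta\rangle$ along $\gll_2$-chains stay non-generic (Zelevinsky \cite{zel}); and since a derivative of a segment representation is again a twist of a segment representation, $\mathrm{Sp}(\pi',n)^{(2)}$ must be a twist of $\mathrm{Sp}(\pi',n-1)$, the precise twist $|\cdot|^{1/2}$ being pinned down by comparing central characters (and using Proposition \ref{P:sp}). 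Consequently, for $n\ge2$ the filtration of $\mathrm{Sp}(\pi',n)|_{D_1^{2n}}$ degenerates: every subquotient but the one at $k=2$ vanishes, forcing $\mathrm{Sp}(\pi',n)|_{D_1^{2n}}\cong\Phi^+\Psi^+\big(\mathrm{Sp}(\pi',n-1)|\cdot|^{1/2}\big)$. The base case $n=1$ is the classical identification of a supercuspidal of $\gll_2$ restricted to its mirabolic with $\cind^{D_1^2}_{U_{(1,1)}}\psi=I_1$.

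It then remains to recognise the right-hand side of the proposition inside this description. Unwinding the definitions, $\mathrm{Sp}(\pi',n-1)|\cdot|^{1/2}\rtimes I_1=\ind^{D_1^{2n}}_{D_2^{2n}\rtimes D_1^2}\big(\mathrm{Sp}(\pi',n-1)|\cdot|^{1/2}\boxtimes I_1\big)$ with $I_1=\cind^{D_1^2}_{N_1}\Psi_{N_1}=\cind^{D_1^2}_{U_{(1,1)}}\psi$. Because the homogeneous space $(D_2^{2n}\rtimes D_1^2)\backslash D_1^{2n}$ is the compact flag variety $P_{(2n-2,1)}\backslash\gll_{2n-1}$, the outer induction in $\rtimes$ has compact support, so induction in stages applies and rewrites $\mu\rtimes I_1$, for any $\mu\in\mathrm{Alg'}\gll_{2n-2}$, as a compact induction to $P_{2n}$ from $\gll_{2n-2}\ltimes U_{(2n-2,1,1)}$ of $\mu$ inflated, trivial on the $(2n-2,2)$-block and equal to $\psi$ on the entry $(2n-1,2n)$. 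By the very definitions $\Psi^+(\mu)=$ (inflation of $\mu$ to $P_{2n-1}$ along $U_{2n-1}$) and $\Phi^+(\sigma)=\cind^{P_{2n}}_{P_{2n-1}U_{2n}}(\sigma\otimes\psi)$, this is exactly $\Phi^+\Psi^+(\mu)$; taking $\mu=\mathrm{Sp}(\pi',n-1)|\cdot|^{1/2}$ completes the argument.

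The step I expect to cost the most care is the second one: justifying the collapse of the derivative filtration (the segment representation has level $2$, so a unique subquotient survives) and, more delicately, fixing the normalising twist $|\cdot|^{1/2}$ in $\mathrm{Sp}(\pi',n)^{(2)}\cong\mathrm{Sp}(\pi',n-1)|\cdot|^{1/2}$ consistently with whatever normalisation of $\Phi^\pm,\Psi^\pm$ is in force. The induction-in-stages identification in the third paragraph, though notationally heavy, should be routine once the unipotent radicals are written out explicitly.
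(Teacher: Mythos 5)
Your argument is correct and is essentially the paper's: the paper proves this proposition by simply invoking Zelevinsky \cite[Theorem 3.5]{zel}, whose content is exactly the Bernstein--Zelevinsky derivative computation you reconstruct (only the $0$-th and $2$-nd derivatives of the segment representation survive, so the mirabolic filtration collapses to $\Phi^+\Psi^+$ of the $2$-nd derivative, which unwinds to $\mathrm{Sp}(\pi',n-1)|\cdot|^{1/2}\rtimes I_1$). The one point to watch is the half-twist bookkeeping you flag: with the standard normalized functors the derivative is $\mathrm{Sp}(\pi',n-1)|\cdot|^{-1/2}$ and $\Phi^+\Psi^+$ carries a compensating $|\cdot|$, whereas your stated twist $|\cdot|^{1/2}$ matches the unnormalized description of $\Phi^+\Psi^+$ you actually use, so the two conventions cancel and the conclusion is the right one.
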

This is a special case of a more general result \cite[Theorem 3.5]{zel}.

Using this, we obtain the following:
\begin{lemm}\label{L:tes}
Let $i,k$ be positive integers such that $k<2i<2n$ and $\sigma=(\sigma,V)$ be a representation of $D^{2i}_k$.
Then, the following $D_{k}^{2n}$-module
$$\Sigma=\pi_{n-i}|\cdot|^{\frac{i}{2}}\rtimes \sigma$$
has a $D_{k+1}^{2n}$-submodule $J$ such that 
$$ \Sigma|_{D_{k+1}^{2n}}/ J\simeq \pi_{n-i}|\cdot|^{\frac{i}{2}}\rtimes \sigma|_{ D^{2i}_{k+1}},\
J \simeq \pi_{n-i-1}|\cdot|^{\frac{i+1}{2}}\rtimes \cind^{D_{k+1}^{2i+2}}_G{\bm \sigma},
$$
 where 
 $$G=\begin{pmatrix}
1&*&*&*\\
&\gll_{2i-k}&&*\\
&&1&\\
&&&1_{k}
\end{pmatrix}
$$
 and $\bm \sigma=(\bm \sigma, V)$ is defined by  
 $$\bm \sigma\left(\begin{pmatrix}
1&*&a&*\\
&b&&c\\
&&1&\\
&&&1_{k}
\end{pmatrix}\right)=\psi(a)\sigma\left(\begin{pmatrix}
b&c\\
&1_{k}
\end{pmatrix}\right).
$$
\end{lemm}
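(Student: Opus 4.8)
The plan is to compute $\Sigma|_{D_{k+1}^{2n}}$ by the geometric lemma (Mackey theory for the inclusion $D_{k+1}^{2n}\subset D_k^{2n}$), starting from the presentation $\Sigma=\ind^{D_k^{2n}}_H(\rho\boxtimes\sigma)$ with $\rho=\pi_{n-i}|\cdot|^{i/2}$ and $H=D_{2i}^{2n}\rtimes D_k^{2i}$. The two subquotients that occur will be identified with $\Sigma|_{D_{k+1}^{2n}}/J$ and $J$, the latter by feeding in Proposition \ref{P:zell}.

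First I would determine the double coset space $D_{k+1}^{2n}\backslash D_k^{2n}/H$. Viewing $D_k^{2n}$, $D_{k+1}^{2n}$ and $H$ as stabilizers in $\gll_{2n}$ of the relevant pointwise-fixed coordinate vectors and block filtrations, a direct matrix computation shows that there are exactly two double cosets: that of $1$, which is closed, and one more, which is open and is represented by a permutation matrix $w$ carrying the $(2n-k)$-th coordinate to the last slot of the block of size $2(n-i)$ on which $\rho$ lives. Hence $\Sigma|_{D_{k+1}^{2n}}$ carries a two-step filtration $0\map J\map\Sigma|_{D_{k+1}^{2n}}\map\Sigma|_{D_{k+1}^{2n}}/J\map 0$, with $J$ the open-cell piece (induced with compact support along the cell, of ``$\Phi^+$-type'') and the quotient the closed-cell piece (of ``$\Psi^+$-type'').

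For the closed cell one checks $D_{k+1}^{2n}\cap H=D_{2i}^{2n}\rtimes D_{k+1}^{2i}$ and that $\rho\boxtimes\sigma$ restricted there equals $\rho\boxtimes(\sigma|_{D_{k+1}^{2i}})$ with no modulus twist (nothing is lost on the unipotent side), so that $\Sigma|_{D_{k+1}^{2n}}/J\simeq\ind^{D_{k+1}^{2n}}_{D_{2i}^{2n}\rtimes D_{k+1}^{2i}}(\rho\boxtimes\sigma|_{D_{k+1}^{2i}})=\pi_{n-i}|\cdot|^{i/2}\rtimes\sigma|_{D_{k+1}^{2i}}$, as asserted. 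For the open cell, $D_{k+1}^{2n}\cap H^{w}$ exposes a copy of $D_1^{2(n-i)}$ acting on the $\rho$-coordinates, so Proposition \ref{P:zell} replaces the restriction of $\rho=\pi_{n-i}|\cdot|^{i/2}$ there by $\pi_{n-i-1}|\cdot|^{(i+1)/2}\rtimes I_1$ with $I_1=\cind^{D_1^2}_{N_1}\Psi_{N_1}$; the extra half power of $|\cdot|$ together with the modulus factor attached to the open cell of the (unnormalized) induction is precisely what turns $i/2$ into $(i+1)/2$. Substituting this back and using transitivity of compact induction, the $I_1$-factor --- which contributes exactly the character $a\mapsto\psi(a)$, i.e. the ``$\psi(a)$'' occurring in $\bm\sigma$ --- merges with $\sigma$ over the subgroup $G$, and a reorganization of the iterated $\cind$'s yields $J\simeq\pi_{n-i-1}|\cdot|^{(i+1)/2}\rtimes\cind^{D_{k+1}^{2i+2}}_G\bm\sigma$.

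The main obstacle is this last step: pinning down the subgroup $G$ and the character $\bm\sigma$ on the nose, and tracking the modulus characters of the unnormalized inductions carefully enough to land on the exponent $(i+1)/2$ and on the stated $\bm\sigma$. By contrast, the enumeration of the two double cosets and the closed-cell computation are routine once everything is written out in coordinates.
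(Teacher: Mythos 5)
Your proposal is correct and follows essentially the same route as the paper: a two-coset Mackey filtration for $D_{k+1}^{2n}\subset D_k^{2n}$ applied to $\Sigma=\ind_H^{D_k^{2n}}(\rho\boxtimes\sigma)$ with $H=D^{2n}_{2i}\rtimes D^{2i}_k$, the closed cell giving the quotient $\pi_{n-i}|\cdot|^{i/2}\rtimes\sigma|_{D^{2i}_{k+1}}$ and the open cell giving $J$, which is then rewritten using Proposition \ref{P:zell} and transitivity of induction to obtain $\pi_{n-i-1}|\cdot|^{(i+1)/2}\rtimes\cind^{D^{2i+2}_{k+1}}_G\bm\sigma$. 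One minor correction: since all $\rtimes$-inductions here are unnormalized, no modulus factor enters on the open cell; the shift from $|\cdot|^{i/2}$ to $|\cdot|^{(i+1)/2}$ comes entirely from the $|\cdot|^{1/2}$ in Proposition \ref{P:zell}.
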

\begin{proof}
Put $H=D^{2n}_{2i}\rtimes D^{2i}_{k}$.
Since
$$H\backslash D^{2n}_k/D^{2n}_{k+1}\simeq P_{(2n-2i,2i-k)}\backslash \gll_{2n-k}/D^{2n-k}_1,$$
we have
$$D^{2n}_i=HD^{2n}_{k+1}\sqcup HwD^{2n}_{k+1},$$
where $w=\begin{pmatrix}
1_{2n-2i-1}&&&\\
&&1&\\
&1_{2i-k}&&\\
&&&1_k
\end{pmatrix}$.
We note that $H\cap D_{k+1}^{2n}=D^{2n}_{2i}\rtimes D^{2i}_{k+1}$ and
\begin{align*}
w^{-1}Hw\cap D_{k+1}^{2n}&=
\begin{pmatrix}
\gll_{2n-2i-1}&*&*&*\\
&\gll_{2i-k}&&*\\
&&1&\\
&&&1_{k}
\end{pmatrix}%\\
%&=
%w^{-1}\begin{pmatrix}
%\gll_{2n-2i-1}&*&*&*\\
%&1&&\\
%&&\gll_{2i-k}&*\\
%&&&1_{k}
%\end{pmatrix}w\\
%&=
%w^{-1}D^{2n}_{2i+1}\begin{pmatrix}
%1_{2n-2i}&\\
%&D^{2i}_k
%\end{pmatrix}w
.
\end{align*}
Put $ J=\{f \in \Sigma \ | {\rm supp} f \subset HwD^{2n}_{i+1}\}$.
Then, $ J$ is $D^{2n}_{k+1}$-module and we have
$$ \Sigma|_{D_{k+1}^{2n}}/ J\simeq \pi_{n-i}|\cdot|^{\frac{i}{2}}\rtimes \sigma|_{ D^{2i}_{k+1}}, \ J\simeq \cind^{D^{2n}_{k+1}}_{w^{-1}Hw\cap D_{k+1}^{2n}}\sigma',$$
where $\pi_{n-i}|\cdot|^{i/2}=(\pi_{n-i}|\cdot|^{i/2},V')$ and $\sigma'=(\sigma',V'\otimes V)$ is defined by
$$\sigma'\left(
\begin{pmatrix}
a&*&b&*\\
&c&&d\\
&&1&\\
&&&1_{k}
\end{pmatrix}
\right)=\pi_{n-i}|\cdot|^{i/2}\left(
\begin{pmatrix}
a&b\\
&1
\end{pmatrix}
\right)
\otimes\sigma
\left(
\begin{pmatrix}
c&d\\
&1_{k}
\end{pmatrix}
\right).
$$
Then, since $\pi_{n-i}|\cdot|^{i/2}|_{D_1^{2n-2i-1}}\simeq \pi_{n-i-1}|\cdot|^{\frac {i+1}{2}}\rtimes I_1$ by Proposition \ref{P:zell}, we have
$$\sigma'\simeq \cind^{w^{-1}Hw\cap D_{k+1}^{2n}}_{G'} \pi_{n-i-1}|\cdot|^{\frac {i+1}{2}}\boxtimes \bm \sigma,$$
where $G'=D^{2n}_{2i+2}\diag(1_{2n-2i-2},G)$ and $ \pi_{n-i-1}|\cdot|^{\frac {i+1}{2}}\boxtimes \bm \sigma$ is regarded as a representation of $G'$ by
$$U_{(2n-2i-2,2i+2)}\backslash G'\simeq \gll_{2n-2i-2}\times G.$$
Thus we have
$$J \simeq \pi_{n-i-1}|\cdot|^{\frac{i+1}{2}}\rtimes \cind^{D_{k+1}^{2i+2}}_G\bm \sigma.$$
\end{proof}
We obtain the following:
\begin{prop}\label{P:fl}
There is a sequence
$$\pi_n=J_1'\supset J_2'\dots\supset J_n'$$
 of subspaces of $\pi_n$ such that
 \begin{itemize}
 \item $J'_i$ is isomorphic to $J_i=J_i(\pi,n):=\pi_{n-i}|\cdot|^{i/2}\rtimes I_i$ as $D^{2n}_i$-representation ($i=1,\dots,n$) and
 \item $J'_i|_{D^{2n}_{i+1}}/J'_{i+1}\simeq K_i=K_i(\pi,n):=\pi_{n-i}|\cdot|^{\frac{i}{2}}\rtimes I_i|_{D^{2i}_{i+1}} \ (i=1,\dots,n-1)$.
 \end{itemize}
\end{prop}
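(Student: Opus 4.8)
The plan is to build the sequence $J_1'\supseteq\cdots\supseteq J_n'$ by induction on $i$, peeling off one layer at a time with Lemma~\ref{L:tes}, while keeping careful enough track of the compact inductions that the submodule produced at each stage is recognized as the next $J_{i+1}$.

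For the base case I would set $J_1'=\pi_n$: Proposition~\ref{P:zell} applied to $\pi'=\pi$ provides a $D^{2n}_1$-isomorphism $\phi_1\colon\pi_n|_{D^{2n}_1}\bij\pi_{n-1}|\cdot|^{1/2}\rtimes I_1=J_1$. For the inductive step, suppose $1\le i\le n-1$ and that a $D^{2n}_i$-submodule $J_i'\subseteq\pi_n$ and a $D^{2n}_i$-isomorphism $\phi_i\colon J_i'\bij J_i=\pi_{n-i}|\cdot|^{i/2}\rtimes I_i$ have been produced. Since $I_i$ is a representation of $D^{2i}_i$ and $i<2i<2n$, Lemma~\ref{L:tes} applies with $k=i$ and $\sigma=I_i$ to $\Sigma=J_i$, and yields a $D^{2n}_{i+1}$-submodule $J\subseteq J_i|_{D^{2n}_{i+1}}$ with
$$J_i|_{D^{2n}_{i+1}}/J\simeq\pi_{n-i}|\cdot|^{i/2}\rtimes I_i|_{D^{2i}_{i+1}}=K_i,\qquad J\simeq\pi_{n-i-1}|\cdot|^{\frac{i+1}{2}}\rtimes\cind^{D^{2i+2}_{i+1}}_{G}\bm\sigma,$$
where $G$ and $\bm\sigma$ are as in Lemma~\ref{L:tes} for $\sigma=I_i$. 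I would then put $J_{i+1}'=\phi_i^{-1}(J)\subseteq J_i'\subseteq\pi_n$: this is a $D^{2n}_{i+1}$-submodule, $\phi_i$ restricts to a $D^{2n}_{i+1}$-isomorphism $J_{i+1}'\bij J$, and $J_i'|_{D^{2n}_{i+1}}/J_{i+1}'\simeq K_i$ is immediate.

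Everything then reduces to proving that $\cind^{D^{2i+2}_{i+1}}_{G}\bm\sigma\simeq I_{i+1}$ when $\sigma=I_i$; granting this, $J\simeq\pi_{n-i-1}|\cdot|^{\frac{i+1}{2}}\rtimes I_{i+1}=J_{i+1}$, so $\phi_{i+1}\colon J_{i+1}'\bij J_{i+1}$ follows by composition, which closes the induction and produces the filtration with all the asserted properties. To establish the identification, I would first note that, writing elements of $D^{2i+2}_{i+1}$ in block sizes $(1,i,1,i)$, the map
$$p\colon G\surj D^{2i}_i,\qquad p\left(\begin{smallmatrix}1&*&a&*\\&b&&c\\&&1&\\&&&1_i\end{smallmatrix}\right)=\left(\begin{smallmatrix}b&c\\&1_i\end{smallmatrix}\right),$$
is a surjective homomorphism and that $g\mapsto\psi(a)$ is a character of $G$, because the $(2,3)$-block of $G$ in these coordinates vanishes and hence the $(1,3)$-entry is additive along $G$. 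Thus $\bm\sigma\simeq\psi(a)\otimes p^{*}I_i$. Using transitivity of $\cind$, the fact that compact induction commutes with pull-back along $p$ (the relevant functions are left $\ker p$-invariant, hence factor through $p$), and the projection formula, one obtains
$$\cind^{D^{2i+2}_{i+1}}_{G}\bm\sigma\simeq\cind^{D^{2i+2}_{i+1}}_{p^{-1}(N_i)}\bigl(\psi(a)\cdot p^{*}\Psi_{N_i}\bigr).$$
A direct matrix computation then identifies $p^{-1}(N_i)$ with $N_{i+1}$: decomposing an element $\left(\begin{smallmatrix}\tilde X&\tilde Y\\&1_{i+1}\end{smallmatrix}\right)$ of $N_{i+1}$ as $\tilde X=\left(\begin{smallmatrix}1&*\\&b\end{smallmatrix}\right)$ with $b\in U_{(1^i)}$ and $\tilde Y=\left(\begin{smallmatrix}a&*\\&c\end{smallmatrix}\right)$ with $c$ upper triangular exhibits exactly the elements of $G$ lying over $N_i$, and under this identification the character $\psi(a)\cdot p^{*}\Psi_{N_i}$ sends such an element to $\psi(a+\mathrm{tr}\,c)=\psi(\mathrm{tr}\,\tilde Y)=\Psi_{N_{i+1}}$. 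Hence the right-hand side equals $\cind^{D^{2i+2}_{i+1}}_{N_{i+1}}\Psi_{N_{i+1}}=I_{i+1}$, as wanted.

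The only genuine difficulty here is this last step: correctly matching the subgroup $p^{-1}(N_i)$ and the twisted character $\psi(a)\cdot p^{*}\Psi_{N_i}$ with $N_{i+1}$ and $\Psi_{N_{i+1}}$ after unwinding the nested compact inductions. Everything else is either a formal manipulation of the induction functors or a direct invocation of Proposition~\ref{P:zell} and Lemma~\ref{L:tes}.
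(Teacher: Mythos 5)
Your proposal is correct and follows essentially the same route as the paper: base case via Proposition \ref{P:zell}, then repeated application of Lemma \ref{L:tes} with $k=i$, $\sigma=I_i$, the only real point being the identification $\cind^{D^{2i+2}_{i+1}}_{G}\bm\sigma\simeq I_{i+1}$, which the paper merely notes in passing. Your explicit verification of that identification (via the surjection $p\colon G\surj D^{2i}_i$, the character $\psi(a)$, and the matching of $p^{-1}(N_i)$ with $N_{i+1}$ and of the characters) is accurate and simply supplies the detail the paper leaves to the reader.
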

\begin{proof}
We have $\pi_n|_{D^{2n}_1}\simeq J_1$ by proposition \ref{P:zell}.
Then, using Lemma \ref{L:tes} repeatedly, we have this proposition immediately (note that if $\sigma=I_i$, we have $ \cind^{D_{i+1}^{2i+2}}_G\bm \sigma \simeq I_{i+1}$ under the notations in Lemma \ref{L:tes}).
\end{proof}

\subsection{The proof of Proposition \ref{P:uniq}}\label{SS:pf}
Finally, we give the proof of Proposition \ref{P:uniq}.

\begin{proof}[proof of Proposition \ref{P:uniq}]
By Lemma \ref{L:nsc} and \ref{L:n12}, we can assume that $\pi$ is supercuspidal and $n>2$.

By Proposition \ref{P:sh} and \ref{P:fl}, the restriction map
$$\Ho _{\gll_n\times\gll_n}(\pi_n,\tau\times\tau')\map \Ho _{\gll_n\times\gll_n}(J_n',\tau\times\tau')\simeq \Ho _{\gll_n\times\gll_n}(I_n^{\chi_\pi},\tau\times\tau') $$
 is well-defined.
 We show that this map is injective.
Suppose $\Ho _{\gll_n\times\gll_n}(\pi_n/J_n',\tau\times\tau')\neq0$ for the sake of contradiction.
Then, we have $\Ho _{\gll_n}(\pi_n/J_n',\tau)\neq0.$
By Proposition \ref{P:fl}, we have $\Ho _{\gll_n}(K_i,\tau)\neq0$ for some $i\in\{1,\dots,n-1\}$.
Finally, using Proposition \ref{P:zell} and Lemma \ref{L:tes} repeatedly, we have
$$\Ho _{\gll_n}(\pi_{n-j}|\cdot|^{j/2}\rtimes \sigma,\tau)=\Ho _{\gll_n}(\pi_{n-j}|\cdot|^{j/2}* \sigma|_{\gll_{2j-n}},\tau)\neq0$$
for some $j\in\zz_{>0}$ such that $j<n \leq  2j$ and representation $\sigma$ of $D^{2j}_{n}$.
This contradicts $\tau\in\mathrm{Irr_{sc}}$ and $n/2>1$.

Therefore,  we have 
$$\dim_\cc\Ho _{\gll_n\times\gll_n}(I_n^{\chi_\pi},\tau\times\tau')\geq\dim_\cc\Ho _{\gll_n\times\gll_n}(\pi_n,\tau\times\tau').$$
On the other hand, the equality
$$\dim_\cc\Ho _{\gll_n\times\gll_n}(I_n^{\chi_\pi},\tau\times\tau')=\begin{cases}
1 \ \text{ if } \tau'=\tau^\vee\chi_\pi;\\
0 \ \text{ otherwise}
\end{cases}$$
holds since $I_n^{\chi_\pi}|_{\gll_n\times\gll_n}\simeq \cind^{\gll_n\times\gll_n}_{\Delta \gll_n} \chi_\pi(\det)$, where $\Delta \gll_n$ is the diagonal embedding of $\gll_n$ to \mbox{$\gll_n\times\gll_n$}.
Thus we get the required statement. 
\end{proof}

\section{Some remarks for general rank case}\label{S:grank}
By Theorem \ref{T:main1} (ii), we have
$$\mathrm{Hom}_{\gll_n\times\gll_{n}}(\pi_{n},\tau\boxtimes\tau^\vee\chi_\pi)\neq0$$
for any  $\tau\in {\rm Irr}\gll_n$  if $\pi$ is approximately tempered.
To conclude this paper, let us consider the branching laws of the Speh representations with respect to general block diagonal subgroups.

 \begin{theo}\label{T:main2}
 Assume $\pi$ is approximately tempered.
Then,  we have
$${\rm Hom}_{\gll_n\times\gll_{n+2l}}(\pi_{n+l},\tau\boxtimes\tau^\vee\chi_\pi\times\pi_{l})\neq0$$
for any  $\tau\in {\rm Irr}\gll_n$.
 \end{theo}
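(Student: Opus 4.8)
The plan is to deduce the statement from the equal-rank branching (Theorem \ref{T:main1}(ii), which gives $\Ho_{\gll_n\times\gll_n}(\pi_n,\tau'\boxtimes(\tau')^\vee\chi_\pi)\neq 0$ for every $\tau'\in\mathrm{Irr}\gll_n$ when $\pi$ is approximately tempered) together with the embedding of $\pi_{n+l}$ into an induced representation provided by Remark \ref{R:sp}. We may assume $n,l\geq 1$. Write $G=\gll_{2n+2l}$, let $Q=P_{(2n,2l)}\subset G$ with Levi $M_Q=\gll_{2n}\times\gll_{2l}$, and let $P=P_{(n,2l)}\subset\gll_{n+2l}$, regarded as the parabolic of the second block-diagonal factor $\gll_{n+2l}$ of $G$, with Levi $M_P=\gll_n\times\gll_{2l}$ and unipotent radical $U_P$. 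Since $\tau^\vee\chi_\pi\times\pi_l=\ind^{\gll_{n+2l}}_P(\delta_P^{1/2}\otimes(\tau^\vee\chi_\pi\boxtimes\pi_l))$, Frobenius reciprocity gives
\[
\Ho_{\gll_n\times\gll_{n+2l}}\bigl(\pi_{n+l},\tau\boxtimes(\tau^\vee\chi_\pi\times\pi_l)\bigr)\ \cong\ \Ho_{\gll_n\times P}\bigl(\pi_{n+l}|_{\gll_n\times P},\ \tau\boxtimes(\delta_P^{1/2}\otimes(\tau^\vee\chi_\pi\boxtimes\pi_l))\bigr),
\]
so it is enough to exhibit a nonzero element of the right-hand space.

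By Remark \ref{R:sp} (this is where approximate temperedness is used), $\pi_{n+l}$ embeds as a $G$-module into $X:=\pi_n|\cdot|^{-l/2}\times\pi_l|\cdot|^{n/2}=\ind^G_Q\sigma_Q$, where $\sigma_Q=\delta_Q^{1/2}\otimes(\pi_n|\cdot|^{-l/2}\boxtimes\pi_l|\cdot|^{n/2})$. Since $\gll_n\times P\subset Q$ and $U_P\subset U_Q$, evaluation at the identity $\mathrm{ev}\colon f\mapsto f(1)$ is a surjective $Q$-equivariant, hence $(\gll_n\times P)$-equivariant, map $X\to\sigma_Q$. A bookkeeping of modulus characters identifies $\sigma_Q|_{\gll_n\times M_P}$ with $(\pi_n|\cdot|^{l/2})|_{\gll_n\times\gll_n}\boxtimes\pi_l|\cdot|^{-n/2}$, and $(\tau\boxtimes(\delta_P^{1/2}\otimes(\tau^\vee\chi_\pi\boxtimes\pi_l)))|_{\gll_n\times M_P}$ with $\tau\boxtimes(\tau^\vee\chi_\pi|\cdot|^{l})\boxtimes\pi_l|\cdot|^{-n/2}$. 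Hence what is needed is a $(\gll_n\times\gll_n)$-map $(\pi_n|\cdot|^{l/2})|_{\gll_n\times\gll_n}\to\tau\boxtimes(\tau^\vee\chi_\pi|\cdot|^{l})$; twisting by $|\cdot|^{-l/2}$, this is exactly an element of $\Ho_{\gll_n\times\gll_n}(\pi_n,\tau'\boxtimes(\tau')^\vee\chi_\pi)$ for $\tau'=\tau|\cdot|^{-l/2}\in\mathrm{Irr}\gll_n$, which is nonzero by Theorem \ref{T:main1}(ii). Tensoring such a nonzero $\phi$ with the identity of $\pi_l|\cdot|^{-n/2}$ (and using that $U_P$ acts trivially throughout) yields a nonzero $(\gll_n\times P)$-map $\Phi\colon\sigma_Q|_{\gll_n\times P}\to\tau\boxtimes(\delta_P^{1/2}\otimes(\tau^\vee\chi_\pi\boxtimes\pi_l))$, and the candidate homomorphism is the composite $\Phi\circ\mathrm{ev}$ restricted to $\pi_{n+l}$.

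The point that I expect to need a little care is that this composite is nonzero. For $v\in\pi_{n+l}$ let $f_v\in X$ be the corresponding function; because $\pi_{n+l}$ is $G$-stable and $f_v(g)=f_{gv}(1)$, the set of all values $\{f_v(g):v\in\pi_{n+l},\ g\in G\}$ equals the image of $\mathrm{ev}|_{\pi_{n+l}}$, which is nonzero since the embedding is. If $\Phi\circ\mathrm{ev}$ vanished on $\pi_{n+l}$, then every $f_v$ would take values in $(\ker\phi)\otimes V_{\pi_l}\subset V_{\pi_n}\otimes V_{\pi_l}$ with $\ker\phi\subsetneq V_{\pi_n}$; but $f_v(\diag(h,1_{2l})g)=\sigma_Q(\diag(h,1_{2l}))f_v(g)=|\det h|^{l/2}(\pi_n(h)\otimes\mathrm{id})f_v(g)$ for $h\in\gll_{2n}$, so the span in $V_{\pi_n}$ of the $V_{\pi_n}$-components of all the values $f_v(g)$ would be a nonzero $\gll_{2n}$-stable subspace contained in $\ker\phi$, and irreducibility of $\pi_n$ would force $\ker\phi=V_{\pi_n}$, a contradiction. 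So $\Phi\circ\mathrm{ev}$ is a nonzero $(\gll_n\times P)$-map and, by the Frobenius isomorphism of the first paragraph, the space in the theorem is nonzero. The two non-formal steps are the modulus computation (which forces the use of the equal-rank map attached to $\tau|\cdot|^{-l/2}$ rather than to $\tau$) and this nonvanishing argument.
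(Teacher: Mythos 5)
Your proposal is correct and is essentially the paper's own argument: both embed $\pi_{n+l}$ into $\pi_n|\cdot|^{-l/2}\times\pi_l|\cdot|^{n/2}$ via Remark \ref{R:sp}, apply the equal-rank homomorphism (twisted, i.e.\ attached to $\tau|\cdot|^{-l/2}$) tensored with $\mathrm{id}_{\pi_l}$ to the values of functions in the induced model, and prove nonvanishing by evaluating at the identity; your Frobenius-reciprocity packaging is just the paper's direct construction of $\tilde L$ in different clothing. The only cosmetic difference is in the nonvanishing step, where the paper uses irreducibility of $\pi_n|\cdot|^{l/2}\boxtimes\pi_l|\cdot|^{-n/2}$ as an $M_{(2n,2l)}$-module to see that evaluation at $1$ maps $\pi_{n+l}$ onto it, while you argue with $\ker\phi$ and irreducibility of $\pi_n$ alone.
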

\begin{proof}
Let $L$ be a nonzero element of ${\rm Hom}_{\gll_n\times\gll_{n}}(\pi_{n}|\cdot|^{\frac{l}{2}},\tau\boxtimes\tau^\vee\chi_\pi|\cdot|^{l})$.
For $f\in \pi_n|\cdot|^{-\frac{l}{2}}\times\pi_l|\cdot|^{\frac{n}{2}}=\pi_n|\cdot|^{\frac{l}{2}}*\pi_l|\cdot|^{-\frac{n}{2}}$,
define $\tilde L f (g)=L\otimes \mathrm{id}_{\pi_l|\cdot|^{-\frac{n}{2}}} (f(g))$ for any $g\in \gll_{n+2l}$.
Then, $\tilde L$ is a nonzero element of 
$${\rm Hom}_{\gll_n\times\gll_{n+2l}}(\pi_n|\cdot|^{-\frac{l}{2}}\times\pi_l|\cdot|^{\frac{n}{2}},\tau\boxtimes\tau^\vee\chi_\pi\times\pi_{l})$$ and the following diagram

\[
  \begin{CD}
     \pi_n|\cdot|^{-\frac{l}{2}}\times\pi_l|\cdot|^{\frac{n}{2}} @>{f\mapsto f(1_{2n+2l})}>>   \pi_n|\cdot|^{\frac{l}{2}}\boxtimes\pi_l|\cdot|^{-\frac{n}{2}}  \\
  @V{\tilde L}VV    @V{L\otimes \mathrm {id}}VV \\
  \tau\boxtimes\tau^\vee\chi_\pi\times\pi_{l}   @>{f'\mapsto f'(1_{n+2l}) }>>  \tau\boxtimes\tau^\vee\chi_\pi|\cdot|^l\boxtimes\pi_{l}|\cdot|^{-\frac{n}{2}}
  \end{CD}
\]
is commutative. 

 Since $\pi$ is approximately tempered, $\pi_{n+l}$ is a subrepresentation of $\pi_n|\cdot|^{-\frac{l}{2}}\times\pi_l|\cdot|^{\frac{n}{2}}$.
Since $\pi_n|\cdot|^{\frac{l}{2}}\boxtimes\pi_l|\cdot|^{-\frac{n}{2}}$ is irreducible,  $\pi_{n+l}$ is mapped onto $\pi_n|\cdot|^{\frac{l}{2}}\boxtimes\pi_l|\cdot|^{-\frac{n}{2}}$ by substituting the unit.
Then, by the above commutative diagram, we have $\tilde L|_{\pi_{n+l}}$ is a nonzero element of ${\rm Hom}_{\gll_n\times\gll_{n+2l}}(\pi_{n+l},\tau\boxtimes\tau^\vee\chi_\pi\times\pi_{l})$.
\end{proof}

Let $\tau'\in\mathrm{Irr}\gll_{n+2l}$.
According to \cite[Proposition 2.3]{ATOBE2018281}, $\tau'$ and $\tau^\vee\chi_\pi\times\pi_l$ have the same cuspidal support  if $\pi$, $\tau$, and $\tau'$ are unramified and $\Ho_{\gll_n\times\gll_{n+2l}}(\pi_{n+l},\tau\boxtimes\tau')\neq0$ (we note that  if $\pi$ and $\tau$ are unramified and unitary, then $\tau^\vee\chi_\pi\times\pi_l$ is unramified and unitary).
This fact is important for determining the near equivalence classes of global Miyawaki lifts (see \cite{itofirst}).
Furthermore, considering Proposition \ref{P:uniq}, it is natural to expect the following conjecture:
\begin{conj}\label{C:end}
Assume $\pi, \tau,$ and  $\tau'$ are unitary.
Then, the following should hold:
\begin{enumerate}
\renewcommand{\labelenumi}{(\roman{enumi})}
\item (uniqueness) $\Ho_{\gll_n\times\gll_{n+2l}}(\pi_{n+l},\tau\boxtimes\tau')\neq0\Rightarrow \tau'=\tau^\vee\chi_\pi\times\pi_l$.
\item (multiplicity at most one) $\dim_\cc {\rm Hom}_{\gll_n\times\gll_{n+2l}}(\pi_{n+l},\tau\boxtimes\tau')\leq1$.
\end{enumerate}
\end{conj}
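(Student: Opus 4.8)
The plan is to prove the conjecture by induction on $l$, the base case $l=0$ being Proposition \ref{P:uniq} together with its extension to arbitrary unitary $\tau$ (established by the reduction described below); this base case says in particular that $\Ho_{\gll_n\times\gll_n}(\pi_n,\tau\boxtimes\tau')$ is at most one-dimensional and forces $\tau'=\tau^\vee\chi_\pi$. For the inductive step I would build a filtration of the restriction $\pi_{n+l}|_{\gll_n\times\gll_{n+2l}}$ generalizing the Shalika-model filtration of Proposition \ref{P:fl}: its bottom layer should be a ``distinguished'' piece on which the equal-rank analysis applies and which produces exactly $\tau^\vee\chi_\pi\times\pi_l$ with multiplicity one, while every higher layer should contribute nothing to $\Ho_{\gll_n\times\gll_{n+2l}}(\pi_{n+l},\tau\boxtimes\tau')$.

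First I would carry out the usual preliminary reductions. If $\pi$ is not supercuspidal, $\pi_{n+l}$ is a quotient of a degenerate principal series and the computation behind Lemma \ref{L:nsc} (via \cite[Lemma 2.5]{ATOBE2018281}) should go through after inducing from $\gll_n\times\gll_n$ up to $\gll_n\times\gll_{n+2l}$; the cases $n\le 2$ should be treated directly, as in Lemma \ref{L:n12}. So one may assume that $\pi$ is supercuspidal and $n\ge 3$. Reducing $\tau$ to a supercuspidal representation is where the unitarity hypothesis first does real work: writing a unitary $\tau$ in Tadi\'c form and embedding it into an induction of essentially tempered segments, one argues via Frobenius reciprocity in the spirit of Proposition \ref{P:red1}, the crucial point being that the extra constituents so produced are, after the necessary twists, \emph{non-unitary} --- hence they cannot equal $\tau^\vee\chi_\pi\times\pi_l$ and are excluded by the unitarity assumption on $\tau'$. (This is precisely the phenomenon that already rules out the boundary case $\tau=\pi|\cdot|^{1/2}$ for $n=2$ in Lemma \ref{L:n12}.)

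The distinguished layer should be produced by the non-vanishing mechanism of Theorem \ref{T:main2}: the equal-rank map $L\in\Ho_{\gll_n\times\gll_n}(\pi_n|\cdot|^{l/2},\tau\boxtimes\tau^\vee\chi_\pi|\cdot|^l)$ --- which is nonzero and, by the base case, spans a one-dimensional space --- gives, after tensoring with $\pi_l|\cdot|^{-n/2}$ and restricting to $\pi_{n+l}$, a nonzero element of $\Ho_{\gll_n\times\gll_{n+2l}}(\pi_{n+l},\tau\boxtimes\tau^\vee\chi_\pi\times\pi_l)$, and the Kirillov--Shalika description of $\pi_n$ (Proposition \ref{P:sh}, with $I_n^{\chi_\pi}|_{\gll_n\times\gll_n}\simeq\cind^{\gll_n\times\gll_n}_{\Delta\gll_n}\chi_\pi(\det)$) should confirm that on this layer the $\gll_{n+2l}$-factor is forced to be $\tau^\vee\chi_\pi\times\pi_l$ and the multiplicity is exactly one. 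The hard part --- and the reason this is stated only as a conjecture --- is the vanishing of the remaining layers. In the equal-rank case this is the injectivity step in the proof of Proposition \ref{P:uniq}, where each $K_i|_{\gll_n}$ is a proper parabolic induction $\pi_{n-j}|\cdot|^{j/2}*\sigma|_{\gll_{2j-n}}$ and hence has no supercuspidal quotient once $n/2>1$. For general $l$ the remaining layers have the same shape on the $\gll_n$-side but are tensored, on the $\gll_{n+2l}$-side, with parabolic inductions involving a twist of $\pi_m$ for various $m$; one must check that no such tensor product admits $\tau\boxtimes\tau'$ as a quotient when $\tau$ is supercuspidal and $\tau'$ is unitary.

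I expect pushing this through to require: (i) a computation of the relevant Jacquet modules of the Speh representations $\pi_m$, refining Remark \ref{R:div} and Proposition \ref{P:zell}, precise enough to track cuspidal supports on both factors; (ii) a ``no-contiguity'' input --- again using the unitarity of $\tau'$ and $\pi$ being approximately tempered --- to discard the boundary layers on which a twist of some $\pi_m$ could coincide with $\tau$ or occur as a subquotient of $\tau'$; and (iii) the inductive hypothesis on $l$, to absorb the bookkeeping on the $\gll_{n+2l}$-side. Making (i)--(iii) precise, and in particular producing a transparent description of the layers of the conjectural filtration together with the twists attached to them, is where the genuine difficulty lies.
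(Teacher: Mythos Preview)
The statement you are attempting to prove is Conjecture~\ref{C:end}, and the paper does \emph{not} contain a proof of it: it is explicitly labeled a conjecture and, as noted just after Theorem~\ref{T:main2} (and in the introduction), uniqueness and multiplicity at most one are listed as the two problems that ``remain'' after the non-vanishing of Theorem~\ref{T:main2}. There is therefore no paper proof to compare against; what you have written is a research outline, and you yourself acknowledge this when you write that ``this is stated only as a conjecture'' and that making your steps (i)--(iii) precise ``is where the genuine difficulty lies.''

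A few concrete comments on your outline. First, even your base case $l=0$ is not established in the paper: Proposition~\ref{P:uniq} is proved only for \emph{supercuspidal} $\tau$, and the argument there relies essentially on the fact that a supercuspidal representation cannot appear as a quotient of a proper parabolic induction (this is what kills the higher layers $K_i$ in the filtration). Your proposed reduction from unitary $\tau$ to supercuspidal $\tau$ via Tadi\'c's classification and Frobenius reciprocity is not routine: embedding $\tau$ into an induced representation gives an injection of Hom spaces in the right direction, but then one must analyze $\Ho_{\gll_n\times\gll_n}(\pi_n,(\tau_1\times\cdots\times\tau_r)\boxtimes\tau')$, and the claim that all ``extra constituents'' fail to be unitary is exactly the delicate combinatorial point that would need a proof. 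Second, for $l>0$ the vanishing step is strictly harder than in Proposition~\ref{P:uniq}: there the $\gll_n$-factor alone suffices (supercuspidality of $\tau$ kills every layer except the bottom), whereas in the unequal-rank case you must simultaneously control the $\gll_{n+2l}$-side, and the target $\tau^\vee\chi_\pi\times\pi_l$ is itself a parabolically induced representation --- so the argument ``the target is supercuspidal, hence no proper induction can map onto it'' is no longer available. Your outline correctly isolates this as the crux, but does not supply the missing mechanism.

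In short: your proposal is a reasonable description of what one would \emph{want} to do, and it accurately flags the obstacles, but it is not a proof, and the paper does not claim one either.
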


%\cite{jacquet1970automorphic}

  \bibliography{aaaaa}
\bibliographystyle{amsalpha} 
\end{document}